\documentclass[11pt]{amsart}
\usepackage[
paper=a4paper ,
  hmarginratio={1:1},     
  vmarginratio={1:1},     
]{geometry}

\usepackage{graphicx}
\usepackage[hidelinks]{hyperref}
\usepackage{amsrefs}
\usepackage{amsmath,amssymb,mathtools,mathrsfs,amsthm}
\usepackage{tikz}
\usepackage{enumerate}

\usepackage[margin=10px]{caption}

\newtheorem{theorem}{Theorem}[section]
\newtheorem*{theorem*}{Theorem}
\usepackage[section]{placeins}
\newtheorem{proposition}[theorem]{Proposition}
\newtheorem{corollary}[theorem]{Corollary}
\newtheorem{lemma}[theorem]{Lemma}
\newtheorem{conjecture}[theorem]{Conjecture}

\theoremstyle{definition}
\newtheorem{definition}[theorem]{Definition}

\newtheorem{remark}[theorem]{Remark}

\numberwithin{equation}{section}
\definecolor{myo}{RGB}{255,196,0}
\definecolor{mpurp}{RGB}{255,0,255}
\DeclareMathOperator{\sing}{sing}
\DeclareMathOperator{\st}{st}
\DeclareMathOperator{\topp}{top}
\DeclareMathOperator{\im}{im}
\title[{Distances between surfaces in $4$-manifolds}]{Distances between surfaces in $4$-manifolds}
\author{Oliver Singh}
\address{Department of Mathematical Sciences, Durham University, United Kingdom.}
\email{\href{mailto:oliver.singh@durham.ac.uk} {oliver.singh@durham.ac.uk}}



\begin{document}
\begin{abstract}
If $\Sigma$ and $\Sigma'$ are homotopic embedded surfaces in a $4$-manifold then they may be related by a regular homotopy (at the expense of 
introducing double points) or by a sequence of stabilisations and destabilisations (at the expense of adding genus).  This naturally gives 
rise to two integer-valued notions of distance between the embeddings: the singularity distance $d_{\sing}(\Sigma,\Sigma')$ and the stabilisation 
distance $d_{\st}(\Sigma,\Sigma')$. Using techniques similar to those used by Gabai in his proof of the 4-dimensional light-bulb theorem, we prove that $d_{\st}(\Sigma,\Sigma')\leq d_{\sing}(\Sigma,\Sigma')+1$.
\end{abstract}
\maketitle
\section{Introduction}\label{sec:intro}
Let $X$ be a smooth, compact, orientable 4-manifold, possibly with boundary. Let~$\Sigma$,~$\Sigma'$ be connected, oriented, compact, smooth, properly embedded surfaces in $X$. We say that $\Sigma'$ is a \emph{stabilisation} of $\Sigma$ if there is an embedded solid tube $D^1\times D^2\subset X$ such that~$\Sigma\cap (D^1\times D^2)=\{0,1\}\times D^2$, and $\Sigma'$ is obtained from $\Sigma$ by removing these two discs and replacing them with $D^1\times S^1$, as in Figure~\ref{fig:astab}, and then smoothing corners. In this situation we say that $\Sigma$ is a \emph{destabilisation} of $\Sigma'$.

\begin{figure}[htb]

\begin{tikzpicture}
\node[anchor=south west,inner sep=0] (image) at (0,0) {\includegraphics[width=0.7\textwidth]{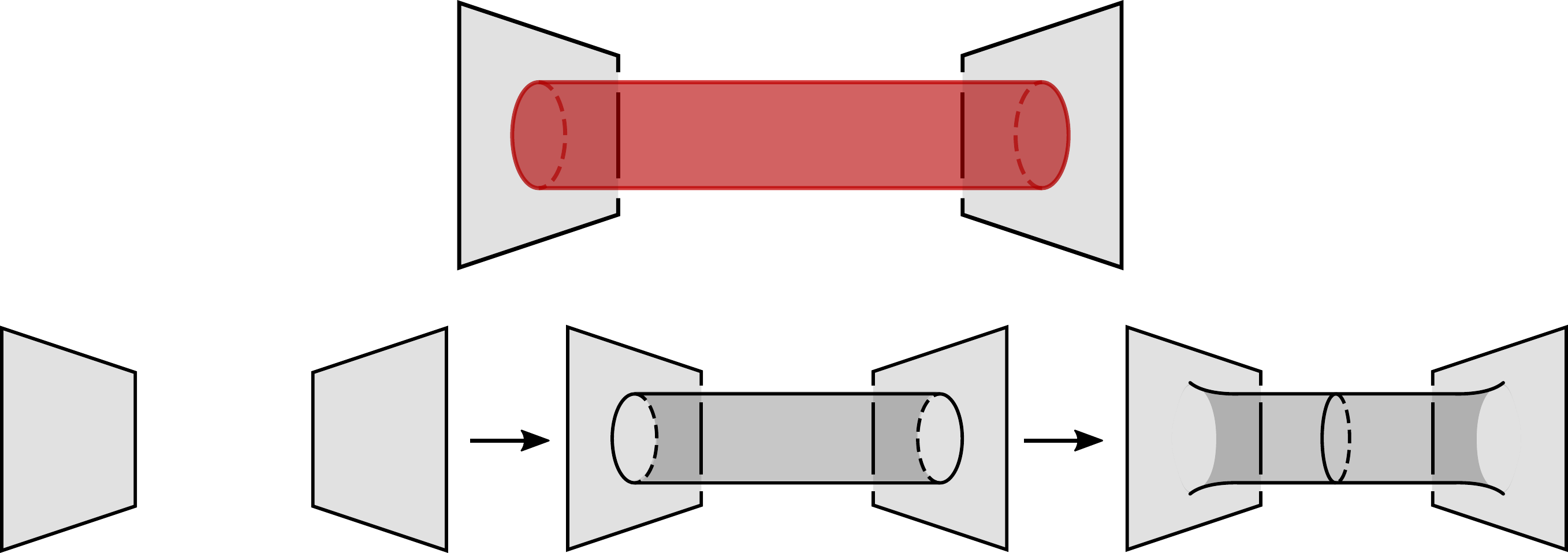}};
  \begin{scope}[x={(image.south east)},y={(image.north west)}]
    \node  at (0.5,0.75)  {$D^1\times D^2$};
    \end{scope}
\end{tikzpicture}
\caption{\label{fig:astab}A stabilisation. Given $D^1\times D^2\subset X$ which intersects~$\Sigma$ on~$S^0\times D^2$, we remove the two discs $S^0\times D^2$, add the tube $D^1\times S^1$, then smooth corners.}
\end{figure}

\begin{definition} Given $\Sigma$, $\Sigma'$ as above, both of genus $g$, define the \emph{stabilisation distance} between $\Sigma$ and $\Sigma'$ to be
$$d_{\st}(\Sigma,\Sigma')=\min_{\mathbb{S}}\left|\max\{g(P_1),\ldots ,g(P_k)\}-g\right|,$$ 
where $\mathbb{S}$ is the set of sequences $P_1,\ldots,P_k$ of connected, oriented, embedded surfaces where $\Sigma=P_1$, $\Sigma'=P_k$ and $P_{i+1}$ differs from $P_{i}$ by one of, i) stabilisation, ii) destabilisation, or iii) ambient isotopy. If no such sequence exists we declare $d_{\st}(\Sigma,\Sigma')=\infty$.
\end{definition}
\begin{definition}Given an immersion $f\colon S\rightarrow X$, define $\sing(f)\subset X$ to be the set of double points. Define the \emph{singularity distance} to be 
$$d_{\sing}(\Sigma,\Sigma')=\tfrac{1}{2} \min_{H} \max_{t\in [0,1]} \left|\sing(H_t)\right|,$$
where the minimum is taken over all generic regular homotopies $H$ from $\Sigma$ to $\Sigma'$. If no such regular homotopy exists we declare $d_{\sing}(\Sigma,\Sigma')=\infty$.
\end{definition}

\begin{remark}

It is possible that $d_{\st}(\Sigma,\Sigma')<\infty$ and $d_{\sing}(\Sigma,\Sigma')=\infty$. For example let~$U\subset S^3$ be the unknot, let~$\Sigma=U\times S^1\subset S^3\times S^1$, and let $\Sigma'$ be a trivially embedded torus in $S^3\times \{\text{pt}\}\subset S^3\times S^1$. Since $\Sigma$ and $\Sigma'$ are not homotopic $d_{\sing}(\Sigma,\Sigma')=\infty$.\\ Further~$\Sigma$ and $\Sigma'$ are related by a destabilisation then a stabilisation, so $d_{\st}(\Sigma,\Sigma')=1$.

\end{remark}
The main theorem of this paper is the following:
\begin{theorem}\label{ineq}
If $\Sigma,\Sigma'\subset X$ are connected, smooth, properly embedded, oriented surfaces of the same genus then
$$d_{\st}(\Sigma,\Sigma')\leq d_{\sing}(\Sigma,\Sigma')+1.$$
\end{theorem}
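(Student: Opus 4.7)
The plan is to take a regular homotopy $H$ from $\Sigma$ to $\Sigma'$ realising $d_{\sing}(\Sigma,\Sigma') = n$, so $|\sing(H_t)| \leq 2n$ for all $t$, and convert it into a sequence of stabilisations, destabilisations, and ambient isotopies whose maximum genus is at most $g+n+1$.

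First, I would perturb $H$ so that between finitely many critical times $t_1<\cdots<t_m$ it is an ambient isotopy of immersions, and at each critical time either a finger move creates a pair of transverse double points of opposite sign (together with a framed finger disc joining them) or a Whitney move cancels such a pair along a Whitney disc. Let $k(t)$ denote the number of cancelling double-point pairs present at time $t$; then $k(t) \leq n$ throughout.

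Next, imitating the tubing construction Gabai uses in the 4D light-bulb theorem, for each $t$ I would build an embedded surface $\tilde H_t$ of genus $g+k(t)$ by attaching one embedded tube per double-point pair, using the associated framed Whitney (or finger) disc to trade each pair of self-intersections for a single handle. Between critical times, $\tilde H_t$ varies by ambient isotopy; at a finger move, $\tilde H_{t^+}$ is obtained from $\tilde H_{t^-}$ by a single stabilisation (adding the new tube); at a Whitney move, by a single destabilisation. Since $\Sigma$ and $\Sigma'$ are embedded, at $t=0$ and $t=1$ no tubes are present and $\tilde H_0 = \Sigma$, $\tilde H_1 = \Sigma'$.

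The hardest part will be reconciling the tube structures when the pairing of double points shifts: a pair born from a finger move need not be the pair annihilated by a later Whitney move, so the Whitney disc available at the cancellation may not be compatible with the existing tube. In that situation I would perform an auxiliary tube swap, which, following a Gabai-style argument involving a dual tubing along the Whitney disc, requires temporarily introducing one extra stabilisation before the subsequent destabilisation can be performed to return to a tube count of $k(t)$. Because this swap costs at most one extra tube at any moment and $k(t)\leq n$, the genus stays at most $g+n+1$, yielding $d_{\st}(\Sigma,\Sigma')\leq n+1 = d_{\sing}(\Sigma,\Sigma')+1$.
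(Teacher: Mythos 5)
Your overall strategy---shadowing the regular homotopy by tubed surfaces, realising each finger move as a stabilisation that adds a tube along a Whitney arc, realising each Whitney move as a destabilisation, and paying a temporary $+1$ in genus to rearrange the tubing when the pair of double points being cancelled is not the pair that was tubed together---is exactly the approach of the paper. (The rearrangements you sketch are formalised there as two separate lemmas, a \emph{tube swap} and a \emph{tube move}, each costing one stabilisation followed by one destabilisation; you will in fact need both, since besides re-pairing double points you must also push tube arcs off the Whitney arcs, and the arcs can intersect each other in ways that a single swap does not handle. But your genus accounting for these steps is correct.)

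There is, however, a genuine gap: you have not addressed the \emph{crossed} Whitney disc, where the Whitney arcs join $x_i^+$ to $y_i^-$ and $y_i^+$ to $x_i^-$ --- that is, the disc pairs the two preimage sheets the ``wrong'' way around even when the two double points it cancels are already tubed to each other. This is not a mismatch of pairings and cannot be repaired by any tube swap or tube move; moreover Schwartz's examples show that crossed Whitney moves can be unavoidable in a regular homotopy, so the case cannot be perturbed away. The paper deals with it by pulling the two sheets apart at the cost of creating a double tube (a Hopf link $\times[0,1]$ joining two disc neighbourhoods of the surface), cutting that double tube open with one stabilisation and one destabilisation, and then observing that what remains in a neighbourhood of the tubing arc is a genus-one slice surface $K$ (or its mirror $K'$, depending on orientations) for the unknot in a $4$-ball. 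An entire further section of banded-link calculus is then needed to show that $K$ and $K'$ each destabilise to the standard disc, so that a crossed Whitney move costs one stabilisation and \emph{two} destabilisations --- not the single destabilisation your proposal assigns to every Whitney move. Without this ingredient your induction does not close, and supplying it is a substantial part of the proof rather than a routine verification.
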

The proof is constructive, in that given a regular homotopy with at most $2n$ double points at each time, we construct a sequence of stabilisations and destabilisations such that the maximum genus occurring is at most $g+n+1$. Combining this with the fact that embedded surfaces are regularly homotopic if and only if they are homotopic, see Remark \ref{htpyrmk}, we give a new proof of a certain case of Baykur and Sunukjian's result \cite[Theorem~1]{InancBaykur2015}.

\begin{corollary}\label{bscor}
If $\Sigma,\Sigma'\subset X$ are homotopic, connected, smooth, properly embedded, oriented surfaces, then they differ by a sequence of stabilisations, destabilisations, and ambient isotopy.
\end{corollary}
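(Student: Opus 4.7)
The plan is to deduce the corollary directly from Theorem \ref{ineq}, using Remark \ref{htpyrmk} as the bridge between homotopy and regular homotopy. First I would note that since $\Sigma$ and $\Sigma'$ are connected, oriented, properly embedded surfaces that are homotopic, they are homotopy equivalent as abstract surfaces, hence have the same Euler characteristic, hence the same genus $g$. This means the hypothesis of Theorem \ref{ineq} regarding equal genus is automatically satisfied.

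Next I would invoke Remark \ref{htpyrmk}, which asserts that homotopic embedded surfaces are in fact regularly homotopic. Pick such a regular homotopy $H$ from $\Sigma$ to $\Sigma'$ and, by a small perturbation if necessary, make it generic so that $H_t$ has only finitely many double points for each $t\in[0,1]$ and only finitely many values of $t$ give non-immersions. In particular $\max_{t\in[0,1]}|\sing(H_t)|$ is finite, so $d_{\sing}(\Sigma,\Sigma')<\infty$.

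Finally, Theorem \ref{ineq} gives
\[
d_{\st}(\Sigma,\Sigma')\;\leq\; d_{\sing}(\Sigma,\Sigma')+1\;<\;\infty,
\]
and by the very definition of $d_{\st}$ this means there exists a sequence $\Sigma=P_1,P_2,\ldots,P_k=\Sigma'$ of connected oriented embedded surfaces, with consecutive terms related by a stabilisation, a destabilisation, or an ambient isotopy. That is exactly the statement of Corollary \ref{bscor}.

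There is essentially no obstacle once Theorem \ref{ineq} and Remark \ref{htpyrmk} are in hand; the only point worth a moment's care is checking that the equal-genus hypothesis needed for Theorem \ref{ineq} follows from homotopy alone, which it does because connected homotopy equivalent closed (or properly embedded) orientable surfaces share their Euler characteristic.
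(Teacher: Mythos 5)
Your proof is correct and follows exactly the route the paper intends: use Remark \ref{htpyrmk} (Smale--Hirsch) to upgrade the homotopy to a generic regular homotopy, conclude $d_{\sing}(\Sigma,\Sigma')<\infty$, and apply Theorem \ref{ineq} to get $d_{\st}(\Sigma,\Sigma')<\infty$, which by definition yields the desired sequence. The equal-genus check is fine, though it is immediate rather than needing Euler characteristics: being homotopic means both surfaces are parametrised by the same abstract surface $S$.
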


We do not know whether the $+1$ is really essential. As we discuss in Remark \ref{improvremark}, it does not seem obvious how to adapt the proof to prove the inequality without the +1. It would, however, be interesting to find an example where the sequence of stabilisations constructed in the proof is minimal.

\begin{conjecture}\label{plus1conjecture}
There exists a smooth, compact, orientable 4-manifold $X$ and smooth, properly embedded, orientable, regularly homotopic surfaces $\Sigma$ and $\Sigma'$ with $$d_{\st}\left(\Sigma,\Sigma'\right)=d_{\sing}\left(\Sigma,\Sigma'\right)+1.$$
\end{conjecture}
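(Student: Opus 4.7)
The plan is to exhibit a concrete pair $(\Sigma,\Sigma')$ for which $d_{\sing}$ can be bounded from above by displaying an explicit regular homotopy and $d_{\st}$ can be bounded from below by an invariant that obstructs a low-genus stabilisation sequence. The smallest interesting case to aim at is $d_{\sing}=1$ and $d_{\st}=2$, which for spheres ($g=0$) asks for two smoothly homotopic $2$-spheres in some $X$ that differ by a single finger/Whitney pair but cannot be related by a sequence of embedded surfaces of genus at most one. Natural candidates to probe are twist-spun knotted spheres in $S^4$, pushoff pairs of a knot $K\subset S^3$, Gluck-twist partners, and sphere pairs in $S^2\times S^2$ or a $D^2$-bundle that are already known to be topologically but not smoothly isotopic.

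The upper bound on $d_{\sing}$ is the easier input: I would describe an explicit regular homotopy by a movie of cross-sections, realising the pair as a single finger move followed by a single Whitney move, and read off that at most two double points are present at any time. Given a suitable invariant $\iota$ for the lower bound, the closing step is then simply evaluating $\iota$ on the chosen pair and checking non-vanishing.

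The hard step, and the expected main obstacle, is the lower bound $d_{\st}\geq 2$. A purported sequence realising $d_{\st}=1$ can be concatenated into a connected, oriented cobordism $C\subset X\times I$ from $\Sigma$ to $\Sigma'$ whose every horizontal slice has genus at most one; equivalently, $C$ decomposes as a short chain of $1$-handle attachments and detachments on tori. The strategy is to argue that such a highly constrained $C$ forces an algebraic invariant — a natural candidate is a Freedman--Quinn-type element valued in $\mathbb{F}_2[\pi_1(X\setminus\Sigma)]$ or a secondary intersection obstruction modelled on the ones appearing in Gabai's proof of the $4$-dimensional light-bulb theorem — to vanish. Off-the-shelf invariants such as $\tau$, $s$, or $d$-invariants do not seem sharply sensitive to genus in the range $0\leq g\leq 1$, so the likely route is to reverse-engineer an obstruction from the proof of Theorem \ref{ineq} itself: the tubes attached there carry specific meridional, framing, and intersection data, and isolating which of those data must be trivial in order for a genus-one cobordism to exist should pinpoint an algebraic condition that one can then arrange to violate in the example. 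The essential difficulty is that this invariant must be both strong enough to detect the gap and controllable enough that one can actually compute it, and it is precisely the absence of such an invariant in the current literature that keeps the conjecture open.
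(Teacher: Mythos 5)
This statement is Conjecture~\ref{plus1conjecture}, which the paper explicitly leaves open: the author writes ``We do not know whether the $+1$ is really essential'' and offers no proof. Your proposal is likewise not a proof but a research plan, and you concede as much in your final sentence. The genuine gap is the entire lower bound: you never fix a specific pair $(\Sigma,\Sigma')$, never define the invariant $\iota$ that is supposed to obstruct $d_{\st}=d_{\sing}$, and never carry out a computation. The phrases ``should pinpoint an algebraic condition'' and ``the likely route is to reverse-engineer an obstruction'' are placeholders for the mathematical content, not the content itself. A proof of an existence statement of this kind requires an actual example together with a verified obstruction; neither is supplied.

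That said, your diagnosis of where the difficulty lies is accurate and matches the paper's own framing. The paper notes that the Juh\'{a}sz--Zemke invariants bound below only the \emph{minimum} of the two distances (their $\mu_{\st}$ and $\mu_{\sing}$), and that no invariant is currently known which bounds $d_{\st}$ from below without also bounding $d_{\sing}$ from below --- which is exactly the separation your strategy would need. Your observation that a genus-$\leq g+n$ stabilisation sequence yields a constrained cobordism in $X\times I$, and that one should try to extract an algebraic obstruction from the tube data in the proof of Theorem~\ref{ineq}, is a reasonable direction, but as written it does not close the gap, and the conjecture remains open.
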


Juh{\'{a}}sz and Zemke define invariants in \cite{Juhasz2018} which bound below the minimum of two similarly defined distances, $\mu_{\st}$ and $\mu_{\sing}$ using their notation. Their singularity distance $\mu_{\sing}$ is the same as $d_{\sing}$ defined here, however $\mu_{\st}\leq d_{st}$, as they allow more general stabilisations and destabilisations. We currently do not know of any invariants that can bound $d_{\st}$ below without also bounding~$d_{\sing}$ below.

The techniques we use to turn regular homotopies into sequences of stabilisations are inspired by those used by Gabai \cite{Gabai2017} to prove that if two homotopic embedded surfaces have a common transverse sphere in $X$, where $\pi_1(X)$ has no 2-torsion, then the surfaces are ambiently isotopic (and so both the distances defined here are $0$). A transverse sphere for a given embedded sphere $S\subset X$ is another embedded sphere $P\subset X$ such that $P$ intersects $S$ transversely at a single point.

Miller \cite{Miller2019} also recently proved, under the assumption that one of the surfaces has a transverse sphere, and that $\pi_1(X)$ has no 2-torsion, that regularly homotopic surfaces are concordant. There are also modified statements of these theorems when $X$ has 2-torsion.

Schwartz found infinitely many examples \cite{Schwartz2018} which demonstrate that the assumption on $\pi_1(X)$ was essential. She produced 4-manifolds $X$ with $\pi_1(X)\cong \mathbb{Z}_2$, and pairs of homotopic embedded 2-spheres in $X$ which share a transverse sphere but are not concordant. We would be interested to know the stabilisation and double point distances of these examples.

Schneiderman and Teichner \cite{Schneiderman2019} recently reproved the 4-dimensional light bulb theorem and characterised the situation for general $\pi_1(X)$ using an obstruction of Freedman and Quinn.

\subsection{The topologically flat case}
The condition of smoothness above was required to define regular homotopy. We define regular homotopy in a topological 4-manifold $X$ by saying two locally flat surfaces $\Sigma$, $\Sigma'\subset X$ are \emph{regularly homotopic} if they differ by a sequence of finger moves, Whitney moves (where the Whitney discs are locally flatly embedded), and ambient isotopy.

We also define stabilisation in the same way, dropping the smoothness condition on the embedding of $D^1\times D^2$. Using the topological definitions, we define~$d_{\st}^{\topp}$ and $d_{\sing}^{\topp}$ analogously. Repeating the proof of Theorem~\ref{ineq}, without the initial smoothness assumption yields:
\begin{theorem}
If $\Sigma$ and $\Sigma'$ are orientable, compact, connected, locally flat, properly embedded surfaces in $X$ of the same genus, then,
$$d_{\st}^{\topp}(\Sigma,\Sigma')\leq d_{\sing}^{\topp}(\Sigma,\Sigma')+1.$$
\end{theorem}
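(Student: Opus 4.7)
My approach would be to rerun the proof of Theorem~\ref{ineq} in the locally flat category, checking that each ingredient used in the smooth construction has a locally flat counterpart. The topological definitions of regular homotopy (as a sequence of finger moves, locally flat Whitney moves, and ambient isotopy) and of stabilisation (via a locally flat embedded tube $D^1\times D^2$) are set up precisely so that the elementary moves appearing in the construction make sense without smoothness, so the hope is that no new ideas are required beyond bookkeeping.

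The key steps, in order, would be as follows. First I would fix a generic regular homotopy $H$ realising $d_{\sing}^{\topp}(\Sigma,\Sigma')=n$, decompose it into finger moves, locally flat Whitney moves, and ambient isotopies, and choose arcs on the source surface pairing up double points exactly as in the smooth proof. Second I would construct the stabilisation tubes that cap off each chosen pair of double points via a small locally flat tube guided by these arcs; for this one needs a normal $D^2$-bundle to the locally flat embedded surface (supplied by the normal bundle theorem of Freedman and Quinn for locally flat surfaces in topological $4$-manifolds), together with a normal neighbourhood of a locally flat embedded arc in the complement, which is elementary in the locally flat category. Third I would propagate the chosen tubes through the regular homotopy: a finger move triggers the insertion of a new stabilisation, while a Whitney move is accompanied by a destabilisation, and any ambient isotopies of tubes that are required to put them in standard position are realised globally via locally flat isotopy extension.

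The step I expect to require the most care is the identification of a stabilisation tube across a Whitney move with a standard neighbourhood of the locally flat Whitney disc, so that removing the cancelled pair of double points truly corresponds to a destabilisation in the sense defined. In the smooth category this follows from a local model near the Whitney disc obtained by transversality; in the locally flat setting I would instead invoke the standard model for a locally flat Whitney move, in which the locally flat Whitney disc admits a product neighbourhood compatible with the normal bundle of the surface. Once this local identification is established, the combinatorial bookkeeping which caps off pairs of double points and which bounds the maximum genus by $g+n+1$ transfers verbatim from the smooth proof, yielding $d_{\st}^{\topp}(\Sigma,\Sigma')\leq d_{\sing}^{\topp}(\Sigma,\Sigma')+1$.
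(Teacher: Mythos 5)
Your proposal is correct and matches the paper's approach exactly: the paper's entire argument is that, with the topological definitions of regular homotopy and stabilisation in place, one simply repeats the proof of Theorem~\ref{ineq} without the smoothness assumption. The additional care you take in justifying the local models (normal bundles of locally flat surfaces, product neighbourhoods of locally flat Whitney discs) is precisely what the paper leaves implicit.
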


As a consequence, we prove an analogy of Corollary~\ref{bscor} for locally flat surfaces.
\begin{corollary}
Let $\Sigma$ and $\Sigma'$ be orientable, compact, connected, locally flat, properly embedded surfaces in $X$. If $\Sigma$ and $\Sigma'$ are regularly homotopic (topologically), then they differ by a sequence of topological stabilisations, destabilisations, and ambient isotopy.
\end{corollary}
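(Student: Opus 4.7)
The plan is to derive this corollary as an immediate consequence of the topological version of Theorem~\ref{ineq} stated just above, in direct analogy with how Corollary~\ref{bscor} follows from Theorem~\ref{ineq}. The argument has essentially three short steps, and I do not anticipate any serious obstacle beyond checking that the definitions line up.

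First I would note that the hypothesis of being topologically regularly homotopic means, by the definition given in this subsection, that $\Sigma$ and $\Sigma'$ are related by a finite sequence of finger moves, locally flat Whitney moves, and ambient isotopies. Each finger move introduces a pair of transverse double points in a standard local model and each Whitney move removes such a pair, so interpolating these with small supported regular homotopies and then perturbing to be generic exhibits a topological regular homotopy from $\Sigma$ to $\Sigma'$ which, at every time $t$, has only finitely many double points. Consequently
$$d_{\sing}^{\topp}(\Sigma,\Sigma')<\infty.$$
Moreover, since a regular homotopy is a homotopy through immersions of a single fixed source surface, $\Sigma$ and $\Sigma'$ automatically have the same genus, so the hypothesis of the topological theorem is met.

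Next I would apply the topological theorem to conclude
$$d_{\st}^{\topp}(\Sigma,\Sigma')\leq d_{\sing}^{\topp}(\Sigma,\Sigma')+1<\infty.$$
By definition of $d_{\st}^{\topp}$, finiteness of this distance is exactly the assertion that there exists a finite sequence $P_1=\Sigma,P_2,\ldots,P_k=\Sigma'$ of connected, oriented, locally flatly embedded surfaces with consecutive terms related by topological stabilisation, destabilisation, or ambient isotopy, which is precisely the statement of the corollary.

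The only point requiring any care is confirming that the local models used to define topological regular homotopy (finger and Whitney moves with locally flat Whitney discs) genuinely realise generic regular homotopies of the kind that witness the finiteness of $d_{\sing}^{\topp}$. Since both notions are defined in the same locally flat category and finger/Whitney moves have the standard local pictures, this amounts to a routine verification rather than a genuine obstacle.
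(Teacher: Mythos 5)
Your proposal is correct and is exactly the argument the paper intends: topological regular homotopy gives $d_{\sing}^{\topp}(\Sigma,\Sigma')<\infty$ (and equal genus), the topological version of Theorem~\ref{ineq} then gives $d_{\st}^{\topp}(\Sigma,\Sigma')<\infty$, and finiteness of $d_{\st}^{\topp}$ is by definition the conclusion. Your final worry is moot since $d_{\sing}^{\topp}$ is itself defined via sequences of finger and Whitney moves, so no genericity check is needed.
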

\subsection{Plan of the paper.} The proof of Theorem~\ref{ineq} will be set out as follows
\begin{itemize}
\item \textbf{Section \ref{sec:reghtpy}:} We recall standard definitions and results for regular homotopy of surfaces in 4-manifolds. We also define surface tubing diagrams, which are similar to those defined by Gabai \cite[Definition 5.5]{Gabai2017}. These contain the data to replace an immersed surface with an embedded one by removing discs around pairs of intersection points and adding in tubes which run along the surface and join up the resulting boundary circles.
\item \textbf{Sections \ref{sec:tubeswap} and \ref{sec:tubemove}:} We prove the \emph{tube swap lemma}, Lemma~\ref{tubeswap}, and the \emph{tube move lemma}, Lemma~\ref{tubemove}. These prove that we can change the way in which the surface is tubed (i.e.\ change the surface tubing diagram) by performing a single stabilisation followed by a single destabilisation.
\item \textbf{Section \ref{sec:mainproof}:} We associate to a regular homotopy a sequence of stabilisations and destabilisations by \emph{shadowing}, again using the terminology of Gabai \cite{Gabai2017}, the homotopy by a tubed surface as follows.
\begin{itemize}
\item At each stage in the regular homotopy where a finger move is performed, we remove the pair of double points by performing a stabilisation which adds a tube running along the surface.
\item At each stage in the regular homotopy where a Whitney move is performed, we wish to perform a destabilisation which removes a tube. However, this may not always immediately be possible for several reasons. Firstly, pairs of double points removed by the Whitney move may not have been tubed to each other, but rather to other double points. Secondly, the tubes may not run over one of the arcs of the Whitney circle. Thirdly, the pairing may be `crossed' in the terminology of Gabai \cite[$\mathsection 3.3$]{Gabai2017}. We use the results proved in Sections~\ref{sec:tubeswap} and \ref{sec:tubemove} to change the way that the surface is tubed to eliminate the first two difficulties. We reduce the third difficulty to the fact that two surfaces $K, K'\subset B^4$ which are slice surfaces for the unknot, both have the property that they destabilise to the standard disc bounded by the unknot. 

\end{itemize}
\item \textbf{Section \ref{sec:destabilising}:} We prove that the pair of slice surfaces $K$ and $K'$ found Section~4 both destabilise to the standard disc bounded by the unknot.
\end{itemize}

\subsection*{Acknowledgements}

Thanks to Andras Juh{\'{a}}sz for proposing this question, and for the interesting and informative discussion that led me to think about it. Thanks to Stefan Friedl, Maggie Miller, and Matthias Nagel for their interest and helpful comments. Thanks also to Andrew Lobb and Mark Powell for their guidance and for many useful conversations. This research was supported by the EPSRC Grant no.\ EP/N509462/1 project no.\ 1918079, awarded through Durham University.

\section{Regular homotopy and surface tubing diagrams}\label{sec:reghtpy}
We recall some standard definitions, and define surface tubing diagrams, which describe how to turn immersed surfaces of genus $g$ with $2n$ double points, into embedded surfaces of genus $g+n$.  
\subsection{Regular homotopy}
Recall that embedded surfaces $\Sigma,\Sigma'\subset X$ are \emph{regularly homotopic} if there exists a smooth map $H \colon S\times [0,1]\rightarrow X$ (where $S$ is an abstract surface), where $H(-,0)$ and $H(-,1)$ are embeddings with $H(S,0)=\Sigma$, $H(S,1)=\Sigma'$, and $H(-,t)$ is an immersion for all $t\in [0,1]$. In the case $X$ and $S$ have boundary, we require the embeddings be proper embeddings, and that $H(t,\partial S)\subset\partial X$ for all $t$.

\begin{remark}
By theorems of Smale \cite{Smale1957} and Hirsch \cite{Hirsch1959} two embeddings of an orientable surface in an orientable 4-manifold are regularly homotopic if and only if they are homotopic. Note that Smale-Hirsch theory gives much more general results about homotopy classes of embedded manifolds. We refer the reader for example to Miller's treatment in this specific case \cite[Theorem 3.3]{Miller2019}.\label{htpyrmk} 
\end{remark}

We recall the following definitions from \cite{Freedman1990}.
\begin{figure}[!ht]
   \centering
\begin{minipage}[c]{0.55\textwidth}
\begin{tikzpicture}
\node[anchor=south west,inner sep=0] (image) at (0,0) {\includegraphics[width=\textwidth]{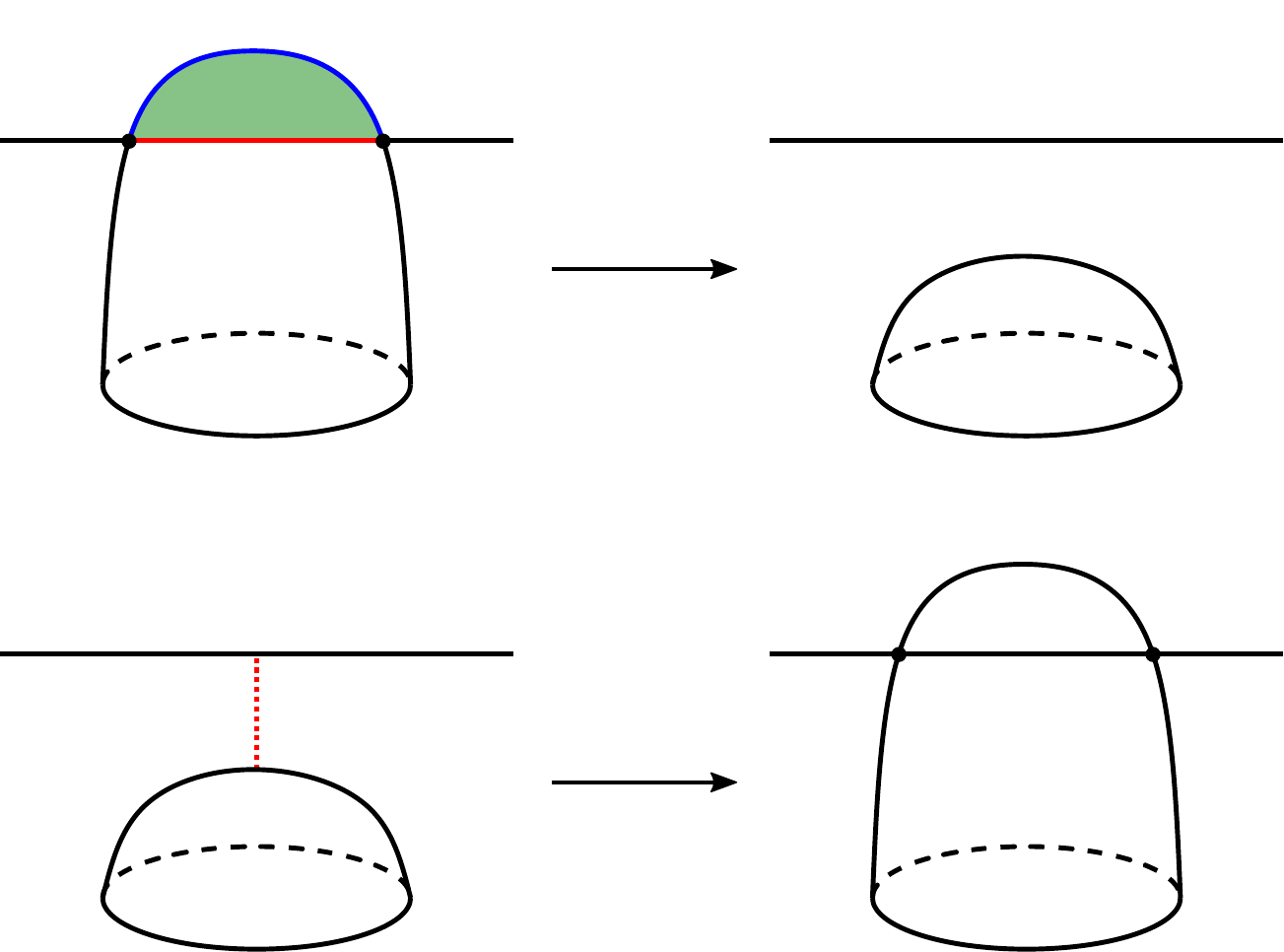}};
  \begin{scope}[x={(image.south east)},y={(image.north west)}]
    \node[text=red] at (0.2,0.8) {$f(\alpha)$};
    \node[text=blue] at (0.2,0.99) {$f(\beta)$};
    \node at (0.083,0.89) {$z^+$};
    \node at (0.335,0.89) {$z^-$};
    \node at (0.2,0.9) {$W$};
    \node[red] at (0.23,0.25) {$\varphi$};
    \node at (0.5,0.79) {Whitney};
    \node at (0.5,0.744) {move};
    \node at (0.5,0.29-0.034) {Finger};
    \node at (0.5,0.244-0.034) {move};
    \end{scope}
\end{tikzpicture}\end{minipage}%
\begin{minipage}[c]{0.45\textwidth}
\caption{\label{fig:whitneydef} The time slice where the finger moves and Whitney moves occur. Note that the horizontal line continues into the past and future, and is unchanged by the homotopies.}\end{minipage}
\end{figure}
\begin{definition}Let $S$ be a compact, oriented surface, and $f\colon S\rightarrow X$ a generic self transverse immersion. Suppose $z^+, z^-\in X$ are a pair of double points of $f$ where the sign of the self intersection at $z^\pm$ is $\pm 1$. Suppose $W\subset X$ is an embedded disc such that~$W\cap f(S)=\partial W$, and the preimage $f^{-1}(\partial W)$ is two disjoint embedded arcs~$\alpha,\beta\subset S$. Note that~$\mathring{\alpha}$ and~$\mathring{\beta}$, the interiors of the arcs, are disjoint from the preimages of  double points of $S$. We call $W$ a \emph{Whitney disc}, and $\alpha$ and $\beta$ the \emph{Whitney arcs}. We require that the canonical framing of $\nu(W)$ restricts on $\partial W$ to the Whitney framing of $\nu (W)|_{\partial W}$. Here the \emph{Whitney framing} is determined by a section of $\nu (W)|_{\partial W}$ which is tangent to $S$ along one Whitney arc, and normal to $S$ along the other Whitney arc. The \emph{Whitney move} along $W$ is the regular homotopy which moves the sheet containing $\beta$ along $W$ to remove the two self intersections; see Figure~\ref{fig:whitneydef}. See \cite{Freedman 1990} for further details.
\end{definition}
\begin{definition}Let $S$ be a compact, oriented surface, and $f:S\rightarrow X$ a self transverse immersion. Let $\varphi\subset X$ be an embedded arc such that~$\varphi\cap f(S)=\partial\varphi$, with $\partial\varphi$ disjoint from double points of $f(S$). The \emph{finger move} along $\varphi$ is the regular homotopy which drags one disc of $f(\nu( f^{-1}(\partial\varphi)))$ along $\varphi$ to create a pair of intersections in the surface with opposite signs; again see Figure~\ref{fig:whitneydef}.
\end{definition}
\begin{remark}\leavevmode
\begin{enumerate}\item Two surfaces are regularly homotopic if and only if they differ by a sequence of finger moves and Whitney moves. Moreover, a generic regular homotopy is a sequence of finger moves and Whitney moves. See \cite{Freedman1990} for a detailed treatment. 
\item Finger moves and Whitney moves are inverse operations; a finger move gives rise to a Whitney disc that undoes the finger move, and a Whitney move gives rise to an arc that undoes the Whitney move via a finger move; again see \cite{Freedman1990}.
\end{enumerate}
\end{remark}

\begin{definition} Given an arc $\Gamma$ in the plane $\mathbb{R}^2\times{0}\subset\mathbb{R}^2\times\mathbb{R}^2$, the \emph{linking annulus} of~$\Gamma$ is the annulus $\Gamma\times S^1\subset \mathbb{R}^2\times \mathbb{R}^2$. 
\end{definition}
\begin{figure}[!ht]
\centering
\begin{tikzpicture}
\node[anchor=south west,inner sep=0] (image) at (0,0) {\includegraphics[width=0.9\textwidth]{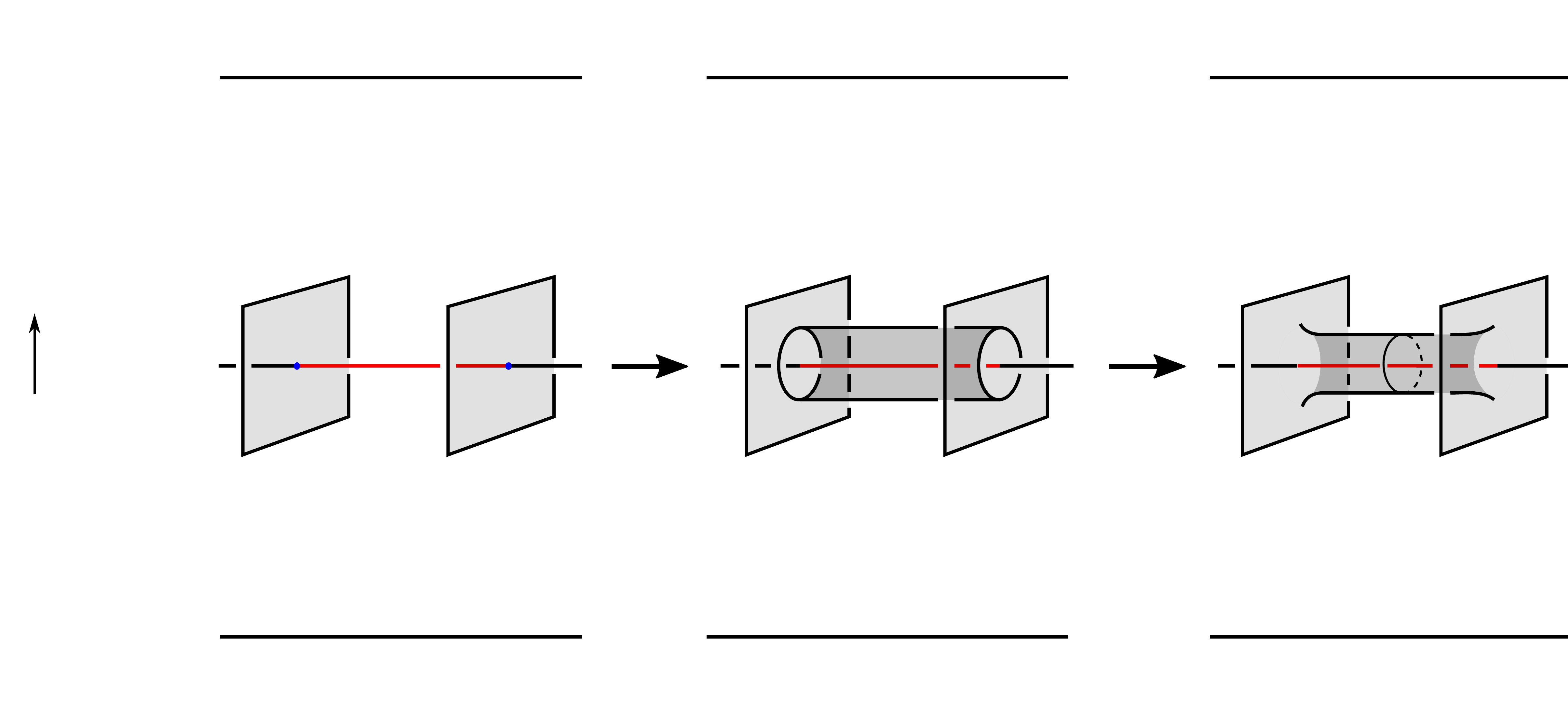}};
  \begin{scope}[x={(image.south east)},y={(image.north west)}]
      \node[anchor=west] at (0.048,0.89) { $t=+\epsilon$};
  \node[anchor=west]  at (0.048,0.493) { $t=0$};
    \node[anchor=west]  at (0.048,0.11) { $t=-\epsilon$};
    \node[text=blue] at (0.19,0.45) { $a$};
    \node[text=blue] at (0.324,0.45) { $b$};
    \node[anchor=west] at (-0.001,0.5) { $t$};
    \node at (0.19,0.35) {$A$};
    \node at (0.324,0.35) {$B$};
    \node[text=red] at (0.25,0.53) {$\Gamma$};
    \end{scope}
\end{tikzpicture}
\caption{\label{fig:surfacesums}A neighbourhood $\nu(\Gamma)$ in which the tubing operation takes place. The time direction is depicted upwards and we see the sheet of the surface containing~$\Gamma$ continues into the past and future. Tubing along $\Gamma$ removes two intersections by removing discs from $A$ and $B$, and adding the linking annulus of the arc $\Gamma$.}
\end{figure}
\begin{definition}\label{tubing} Suppose $A$, $B$, and $C$ are immersed surfaces in $X$, which intersect each other and themselves transversely only in double points. Suppose that $a\in A\cap C$ and $b\in B\cap C$. Suppose that $\Gamma$ is an embedded arc in $C$ (whose interior is disjoint from $A$ and $B$, and from other double points of $C$). Then \emph{tubing} along $\Gamma$ is the result of removing a disc around $a$ from $A$, a disc around $b$ from $B$, and adding the linking annulus of $\Gamma$ (parametrising $\nu(\Gamma)\cong B^4\subset X$ as $\mathbb{R}^2\times\mathbb{R}^2$ so that $C\cap\nu(\Gamma)$ is the plane~$\mathbb{R}^2\times{0}$), then smoothing corners; see Figure~\ref{fig:surfacesums}.
\end{definition}

\begin{remark}
Usually when we tube, $A$, $B$, and $C$ will, in fact, be subsurfaces of the same connected surface in $X$, obtained by taking the intersection of the surface with some ball $B^4\subset X$. Note that the resulting surface, in this case, is oriented if and only if the self-intersections $a$ and $b$ have opposite sign.
\end{remark}

\begin{remark} By convention, the middle time picture in movies such as Figure~\ref{fig:surfacesums} will be referred to as the $t=0$ slice or \emph{the present}.
\end{remark}

\subsection{Tubed Surfaces}

We shall describe how to replace self transverse immersed surfaces of genus $g$ in $X$, with $2n$ double points, with embedded surfaces of genus $g+n$ by pairing up double points with opposite sign, and tubing along an arc between them. We make the following definition, analogous to that made by Gabai \cite[Definition~5.5]{Gabai2017}.

\begin{figure}[ht!] 
\begin{minipage}[c]{0.45\textwidth}
   \begin{tikzpicture}
\node[anchor=south west,inner sep=0] (image) at (0,0) {\includegraphics[width=\textwidth]{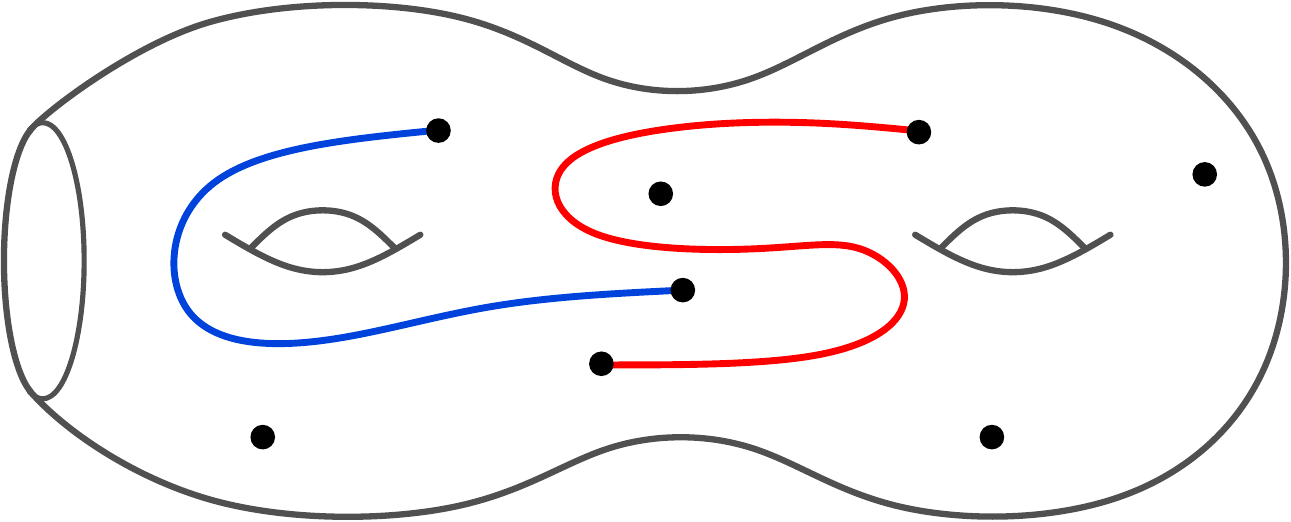}};
  \begin{scope}[x={(image.south east)},y={(image.north west)}]
    \node[text=blue] at (0.10,0.48) {$ \gamma_1$};
    \node[text=red] at (0.735,0.41) {$ \gamma_2$};
    \node at (0.39,0.76) {$x_1^+$};
    \node at (0.58,0.44) {$x_1^-$};
    \node at (0.255,0.18) {$y_1^+$};
    \node at (0.56,0.64) {$y_1^-$};
    
    \node at (0.76,0.755) {$x_2^-$};
    \node at (0.43,0.285) {$x_2^+$};
    \node at (0.899,0.66) {$y_2^+$};
    \node at (0.814,0.15) {$y_2^-$};
    \end{scope}
\end{tikzpicture}
\end{minipage}\hfill
\begin{minipage}[c]{0.55\textwidth}
      \caption{A surface tubing diagram. We depict the preimage $S$ of the embedding, and the two preimages of each double point, for example, $x_1^+$ and $y_1^+$ are the preimages of the double point $z_1^+\in X$. We also show the arcs $\gamma_1$ and $\gamma_2$, along which we tube to construct the associated tubed surface $\widetilde{S}$. }\label{fig:surfacediagram}\end{minipage}
\end{figure}

\begin{definition} Let $X$ be a smooth 4-manifold. A \emph{surface tubing diagram} $\mathcal{S}$, consists of the following data:
\begin{enumerate}[(i)]
\item A compact, oriented, connected surface $S$ possibly with boundary.
\item A generic (only isolated double points), self transverse, immersion ${f\colon S\rightarrow X}$\linebreak with $\partial X\cap f(S)=f(\partial S)$ and with the same number of positive self-intersections as negative. We take care not to confuse the abstract surface $S$ with the immersed surface which is its image, $f(S)$, denoted $S^{\im}$.

\item A partition of the set of double point images, $\{z\in X\:\colon \:|f^{-1}(z)|=2\}$ into pairs~$\{z_i^+,z_i^-\}\subset X$,~$i=1 \ldots n$~with  the sign of the intersection $z^\pm_i$ being $\pm 1$. We denote the preimage of $z_i^+$ by the pair $(x_i^+,y_i^+)\in S\times S$, which comes with a choice of ordering. We denote the preimage of $z_i^-$ by the pair $(x_i^-,y_i^-)$. We refer to any pre-images of double points as \emph{marked points} of the diagram.

\item A set of disjoint embedded arcs $\gamma_1,\ldots ,\gamma_n\colon [0,1]\rightarrow S$ with endpoints $\gamma_i(0)=x_i^+$ and ${\gamma_i(1)=x_i^-}$ and which are disjoint from $\{x_j^\pm,y_j^\pm\}_{j\neq i}$ and from $y_i^\pm$ (note that their images $\Gamma_i=f( \gamma_i([0,1]))$ are also disjoint embedded arcs in $S^{\im}$); see Figure~\ref{fig:surfacediagram}.
\end{enumerate}
\end{definition}
\begin{remark}
In the topological case we make the same definition for~$f\colon S\rightarrow X$ which is an immersion obtained from some locally flat embedding $g\colon S\rightarrow X$ by applying some sequence of finger moves, Whitney moves, and ambient isotopy.
\end{remark}

\begin{definition}
Given a surface tubing diagram we construct the \emph{associated tubed surface} by tubing $S^{\im}$ to itself along each arc $\Gamma_i$ using tubes in a small neighbourhood of each $\Gamma_i$, as in Definition \ref{tubing} with $\Gamma =\Gamma_i$, $A=f(\nu (y_i^+))$, $B=f(\nu (y_i^-))$, and $C=f(\nu (\gamma_i))$; again see Figure \ref{fig:surfacesums}. Since $z_i^+$ and $z_i^-$ have opposite signs the result is an oriented embedded surface which we call $\widetilde{S}$.
\end{definition}


\begin{remark}\label{isoremark}\leavevmode
\begin{enumerate}[(1)]
\item Ambient isotopy of the immersed surface $S^{\im}$ (which gives rise to an isotopy of the immersion $f$) describes an ambient isotopy of the arcs $\Gamma_i$, and so by extension an isotopy of the associated tubed surface $\widetilde{S}$, where we make sure to keep the tubes close to the surface. At the end of the isotopy, we still have a tubed surface and we still have a surface tubing diagram with the same arcs (but with immersion data~$H(-,1)\circ f$, where~$H\colon S\times [0,1]\rightarrow X$ is the ambient isotopy).\label{isoremark1}
\item An isotopy of the set of arcs $\gamma_i$ in $S$ (which keeps the arcs disjoint from the preimages of self-intersection points throughout the isotopy) gives rise to an isotopy of the tubes and hence of the associated tubed surface.\label{isoremark2}
\end{enumerate}
\end{remark}

\section{The Tube Swap Lemma}\label{sec:tubeswap}
We prove that if we change our surface tubing diagram as depicted in Figure~\ref{fig:tubeswap}, then there is a single stabilisation followed by a single destabilisation taking one associated surface to the other.

The proof is mainly by pictures. We shall draw surfaces in a 4-ball as movies with a time direction, drawing slices of~$3$-dimensional space. For the sake of readability, we draw our pictures as piecewise-smooth and so they have corners, however, they should each be understood to describe a smooth surface. The corners arise in two ways, firstly from stabilisations and destabilisations, and secondly when part of the surface `jumps' into the time direction. The reader should mentally smooth these pictures. For stabilisations and destabilisations, the smoothing is as in Figure~\ref{fig:astab}. The corners arising from jumps into the time direction are locally modelled as a product of two arcs which are properly embedded in two discs, one arc having a corner. Smoothing the corner of the arc gives a smoothing of the surface.
\begin{lemma}[Tube Swap Lemma] Given a surface tubing diagram $\mathcal{S}$, let $\beta$ be any arc in $S$ from $y_i^+$ to $y_i^-$ which is disjoint from all points and curves in the surface diagram $($including $\gamma_i)$. Form $\mathcal{S}'$ by removing the arc $\gamma_i$ and replacing it with $\beta$ $($and changing the order of $(x_i^+,y_i^+)$ to $(y_i^+,x_i^+))$ and  $(x_i^-,y_i^-)$ to $(y_i^-,x_i^-))$; see Figure~\ref{fig:tubeswap}.

Then the associated tubed surface $\widetilde{S}$ can be transformed into $\widetilde{S}'$ by performing a single stabilisation, followed by a destabilisation $($and ambient isotopy$)$.\label{tubeswap}
\end{lemma}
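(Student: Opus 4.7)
The plan is to produce $\widetilde{S}'$ from $\widetilde{S}$ by one stabilisation followed by one destabilisation. The stabilisation adds a tube $T'$ along $f(\beta)$, obtained by removing small discs from the two $x$-sheets at $z_i^+$ and $z_i^-$ and inserting the linking annulus of $f(\beta)$. This yields an intermediate surface $\widehat{S}$ of genus $g+n+1$ carrying both tubes. The step is legitimate because, by hypothesis, $\beta$ is disjoint in $S$ from $\gamma_i$ and from every other arc and marked point of the diagram, so $f(\beta)$ meets $S^{\im}$ only at its two endpoints; by making $T'$ sufficiently thin one arranges that it is disjoint from the existing tube $T$ along $\Gamma_i$ and from every other part of $\widetilde{S}$ except the two prescribed end-discs on the $x$-sheets.

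Next I would destabilise $\widehat{S}$ by removing $T$. The destabilisation caps are the two small discs $D_+$ and $D_-$ on the $y$-sheet around $z_i^+$ and $z_i^-$, i.e.\ the very discs originally excised from the $y$-sheet when $T$ was built; their boundaries are by construction the two end circles of $T$. The essential geometric point is that before the stabilisation each $D_\pm$ met the $x$-sheet at the double point $z_i^\pm$ itself, but the stabilisation has just punctured both $x$-sheets at exactly these points, so after it the discs $D_\pm$ lie in $X\setminus\widehat{S}$ except along $\partial D_\pm\subset T$. To carry out the destabilisation I would then exhibit an embedded three-ball $D^1\times D^2\subset X$ with $D^1\times S^1=T$, $\{0,1\}\times D^2=D_+\sqcup D_-$, and interior disjoint from $\widehat{S}$; this three-ball is built as a one-parameter family of slice discs sweeping along $\Gamma_i$, with each slice chosen to cross the $x$-sheet only inside one of the two enlarged punctures opened by $T'$. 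After destabilising, the tube $T$ is gone, the $y$-sheet is reinstated at $z_i^\pm$ by $D_\pm$, and the remaining tubes are those along $\beta$ and along $\gamma_j$ for $j\ne i$, which is precisely $\widetilde{S}'$.

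The main obstacle is the explicit construction of this destabilisation three-ball. The naive product three-ball $\Gamma_i\times D^2_\epsilon$ meets the punctured $x$-sheet along the entire interior of $\Gamma_i$, since $\Gamma_i$ sits on the $x$-sheet outside the punctures, so one cannot simply take it; instead one must deform the three-ball, choosing the puncture radius from $T'$ large and routing each slice disc so its single algebraic intersection with the $x$-sheet is absorbed into one of the two endpoint punctures rather than landing on the middle of $\Gamma_i$. This is the heart of the proof, and the paper's introductory remarks that the argument is ``mainly by pictures'' and proceeds via piecewise-smooth movies (with corners smoothed as in Figure~\ref{fig:astab}) indicate that the construction is most transparently carried out by an explicit movie in a 4-ball around $\Gamma_i\cup f(\beta)$. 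Topologically, the loop $\Gamma_i\cup f(\beta)$ bounds a Whitney-disc-like region in this 4-ball, and the destabilisation three-ball is essentially a suitable normal thickening of an arc in that region, interpolating between $D_+$ and $D_-$ through the two enlarged punctures.
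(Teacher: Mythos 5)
Your bookkeeping of the end result is correct, but neither of your two moves is a legitimate stabilisation or destabilisation in the sense of the paper, and the obstruction is the same in both cases. A stabilisation requires an embedded solid tube $D^1\times D^2$ meeting the surface \emph{only} in the two end discs $\{0,1\}\times D^2$; consequently each meridian $\{s\}\times S^1$ of the added tube bounds the disc $\{s\}\times D^2$, whose interior is disjoint from the stabilised surface. For your tube $T'$, the linking annulus of $f(\beta)$, the meridian over an interior point $p$ of $f(\beta)$ is a meridian circle of the sheet of $\widetilde S$ containing $f(\beta)$: it bounds a small normal disc meeting $\widehat S$ transversely in the single point $p$. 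If it also bounded a disc with interior disjoint from $\widehat S$, the union of the two discs would be a sphere in $\nu(\Gamma_i\cup f(\beta))\cong S^1\times B^3$ with odd mod $2$ intersection number with $\widehat S$, which is impossible since $H_2(S^1\times B^3;\mathbb{Z}/2)=0$. So no solid tube realising $T'$ exists; your claim that ``making $T'$ sufficiently thin'' suffices confuses disjointness of the \emph{annulus} $T'$ from $\widetilde S$ (true) with disjointness of the interior of the \emph{solid} tube (false: it meets the $y$-sheet band along $f(\beta)$). Exactly the same argument applied to the meridian of $T$ over an interior point of $\Gamma_i$ shows that the destabilisation three-ball you are seeking does not exist either: the $x$-sheet band is punctured only at its two ends, those punctures are capped off by $T'$ \emph{within} $\widehat S$, so the meridian still has mod $2$ linking number $1$ with $\widehat S$, and no ``routing'' of the slice discs can defeat a homological obstruction. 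This is precisely why the tubing operation of Definition~\ref{tubing} genuinely removes double points rather than being a stabilisation of the immersed surface, and why the lemma requires real work.

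The paper's proof uses a different stabilisation: a short tube joining the existing annulus $T$ to the adjacent sheet $f(\nu(\gamma_i))$ along which it runs. That one \emph{is} an honest stabilisation, because the solid tube between $T$ and the sheet it links is empty. The destabilising disc is then produced only after a substantial explicit isotopy (Figures~\ref{fig:tubeswap1}--\ref{fig:tubeswapB}), and the intermediate genus-$(g+n+1)$ surface is not $\widetilde S$ with both linking annuli attached. To salvage an argument along your lines you would need to replace at least one of your two moves by one whose meridian obstruction vanishes, which in practice forces you back to a construction of the paper's type.
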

\vspace{-3.4mm}
\begin{figure}[!h]
  \begin{minipage}[c]{0.55\textwidth}
\begin{tikzpicture}
\node[anchor=south west,inner sep=0] (image) at (0,0) {\includegraphics[width=\textwidth]{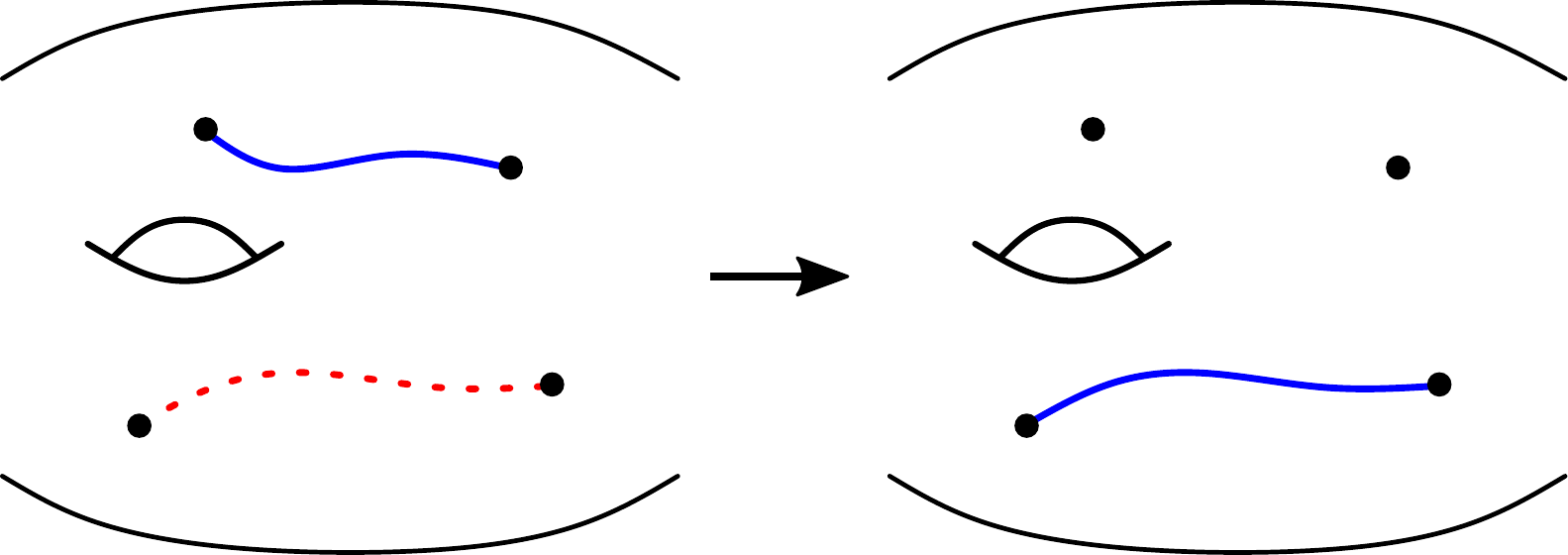}};
  \begin{scope}[x={(image.south east)},y={(image.north west)}]
  \node[text=blue] at (0.78,0.23) {$\beta$};
  \node[text=red] at (0.78-0.564,0.23) {$\beta$};
  \node[text=blue] at (0.23,0.625) {$\gamma_i$};
  \node at (0.1,0.77) {$x_i^+$};
  \node at (0.37,0.72) {$x_i^-$};
  \node at (0.06,0.23) {$y_i^+$};
  \node at (0.39,0.31) {$y_i^-$};
  \node at (0.1+0.564,0.77) {$x_i^+$};
  \node at (0.37+0.564,0.72) {$x_i^-$};
  \node at (0.06+0.564,0.23) {$y_i^+$};
  \node at (0.39+0.564,0.31) {$y_i^-$};
    \end{scope}
\end{tikzpicture}   
  \end{minipage}\hfill
  \begin{minipage}[c]{0.45\textwidth}
    \caption{\label{fig:tubeswap}The diagram for a tube swap: we swap the arc $\gamma_i$ for any arc $\beta$ from $y_i^+$ to $y_i^-$, disjoint from all arcs and points. The tube swap lemma says there is a stabilisation and destabilisation, taking the associated tubed surface of the left diagram, to the associated tubed surface of the right.  } 
  \end{minipage}
\end{figure}
\vspace{-1.4mm}
\begin{proof}
We direct the reader to Figure~\ref{fig:tubeswap0}. We first consider a small tubular neighbourhood~$\nu (\Gamma_i\cup f(\beta))$~ in $X$, which is diffeomorphic to $S^1\times B^3$. We pick a diffeomorphism of~$\nu ( \Gamma_i)$ to~$B^4$, parametrising so that at~$t=0$ we see~$f(\nu ( \gamma_i))$ (the sheet of the immersed surface containing~$\Gamma_i$) and the ends of the arc~$f(\beta)$). The rest of the sheet containing the ends of $f(\beta)$ extends into the past and future. We extend our parametrisation to one of $S^1\times B^3$ so that at each $t$ we see a copy of $S^1\times B^2$, and so $f(\beta)$ is in the $t=0$ frame and the sheet containing it extends into the past and future; see Figure~\ref{fig:tubeswap0}.

\begin{figure}[!h]
   \centering 
\begin{tikzpicture}
\node[anchor=south west,inner sep=0] (image) at (0,0) {\includegraphics[width=0.75\textwidth]{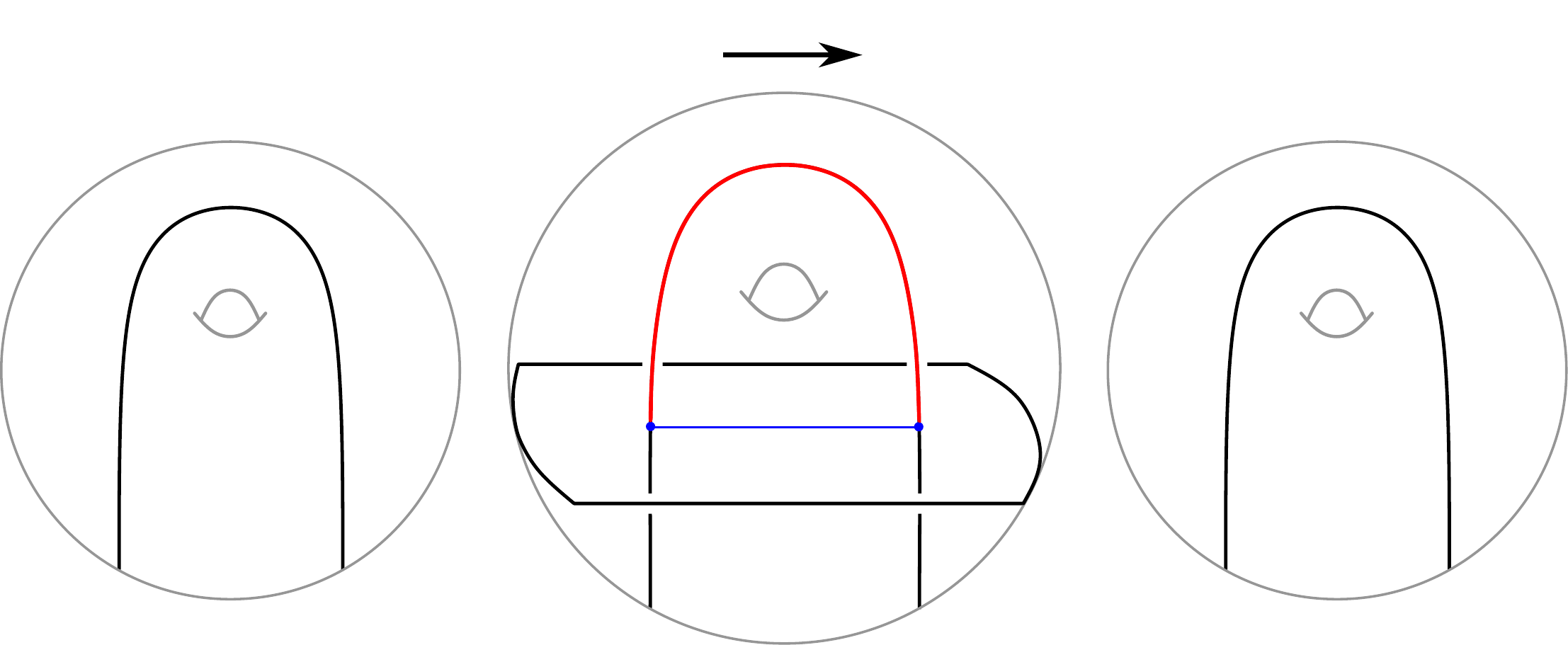}};
  \begin{scope}[x={(image.south east)},y={(image.north west)}]

    \node at (0.505,0.96) { $t$};
    \node[blue] at (0.505,0.29) { $\Gamma_i$};
    \node[red] at (0.604,0.67) { $f(\beta)$};
    \node at (0.395,0.34) { $z_i^+$};
    \node at (0.61,0.34) { $z_i^-$};
    \end{scope}
\end{tikzpicture}
      \caption{\label{fig:tubeswap0}The surface $S^{\im}$ in the neighbourhood $\nu ( \Gamma\cup f(\beta))\cong S^1\times B^3$. The middle picture is $t=0$. In each time frame we see a copy of $S^1\times D^2$.}
\end{figure}
\begin{figure}[p]
   \centering 
   \begin{tikzpicture}
\node[anchor=south west,inner sep=0] (image) at (0,0) {\includegraphics[width=0.9\textwidth]{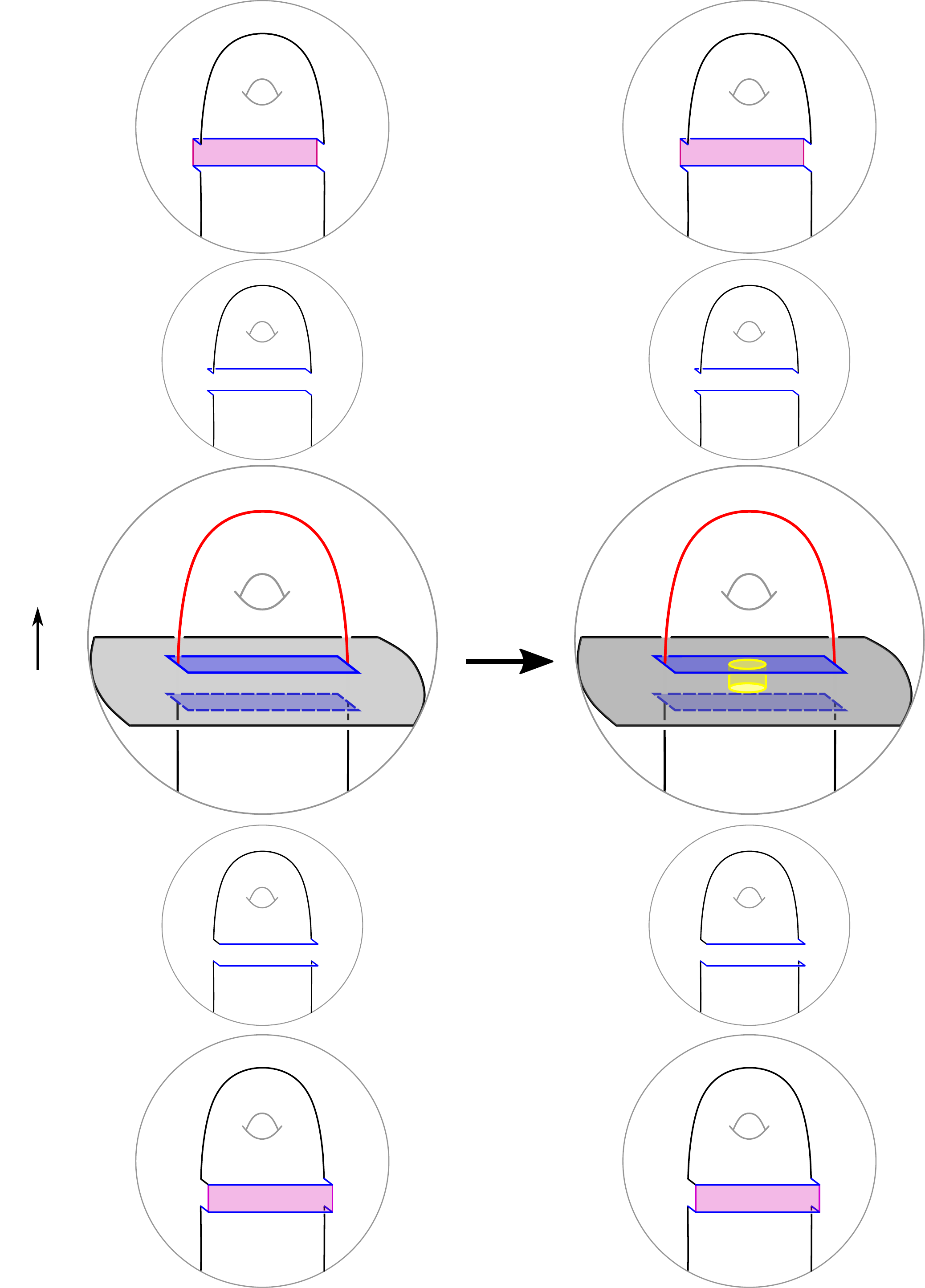}};
  \begin{scope}[x={(image.south east)},y={(image.north west)}]
    \node at (0.02,0.503) { $t$};
    \node at (0.283,-0.013) { A};
    \node at (0.814,-0.0128) { B};
    \end{scope}
\end{tikzpicture}
      \caption{\label{fig:tubeswap1}In A we depict the associated tubed surface $\widetilde{S}$ in a neighbourhood of~$X$ given by $\nu ( \Gamma\cup f(\beta))\cong S^1\times B^3$. To obtain B from A we perform a stabilisation from the tube to the sheet of the surface surface~$f(\nu ( \gamma_i))$.}
\end{figure}
\begin{figure}[p]
   \centering 
   \begin{tikzpicture}
--\node[anchor=south west,inner sep=0] (image) at (0,0) {\includegraphics[width=0.9\textwidth]{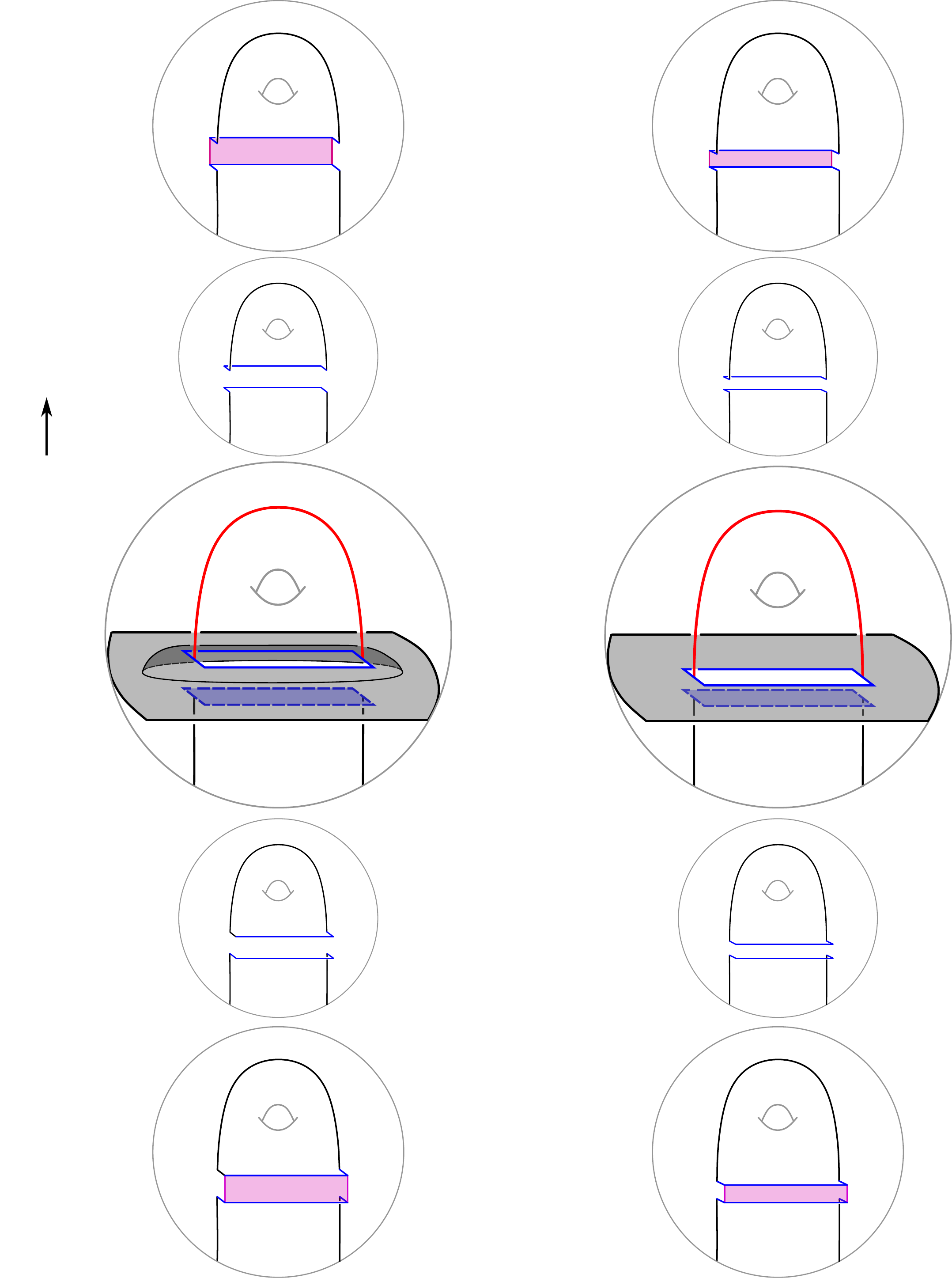}};
  \begin{scope}[x={(image.south east)},y={(image.north west)}]
    \node at (0.03,0.66) { $t$};
    \node at (0.04,0.5) {\Large $\sim$};
    \node at (0.56,0.5) {\Large $\sim$};
    \node at (0.283,-0.013) { C};
    \node at (0.814,-0.0128) { D};
    \end{scope}
\end{tikzpicture}
      \caption{\label{fig:tubeswap2}To obtain C from  Figure~\ref{fig:tubeswap1}.B we perform an isotopy which is supported in the~$t=0$ frame (`inflating' the tube from the stabilisation). To obtain D from C we then flatten the picture out, which slightly deforms the surface in other time frames.}
\end{figure}

\begin{figure}[p]
   \centering
   \begin{tikzpicture}
\node[anchor=south west,inner sep=0] (image) at (0,0) {\includegraphics[width=0.795\textwidth]{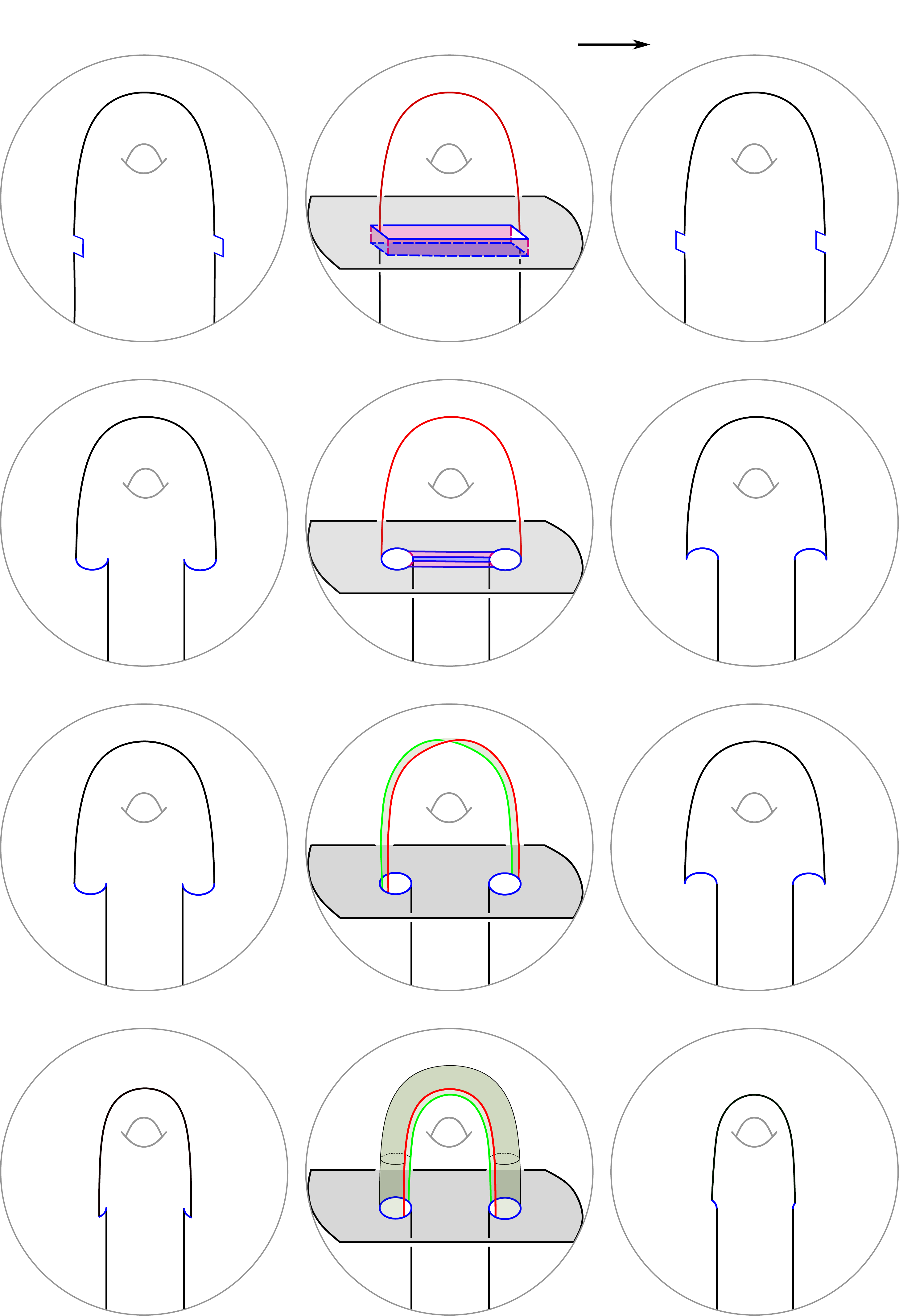}};
  \begin{scope}[x={(image.south east)},y={(image.north west)}]
    \node at (0.683,0.98) { $t$};
    \node[rotate=90] at (0.503,0.97) {\Large $\sim$};
    \node[rotate=90] at (0.503,0.48) {\Large $\sim$};
    \node[rotate=90] at (0.503,0.724) {\Large $\sim$};
    \node[rotate=90] at (0.503,0.232) {\Large $\sim$};
    \node[anchor=west] at (-0.06,0.848){ E:};
    \node[anchor=west] at (-0.06,0.602){ F:};   
    \node[anchor=west] at (-0.06,0.352){ G:};   
    \node[anchor=west] at (-0.06,0.102){ H:};
    \end{scope}
\end{tikzpicture}
      \caption{\label{fig:tubeswapA}Here the time direction is to the right, and we depict 4 stages of the isotopy. To obtain E from Figure~\ref{fig:tubeswap2}.D we push the sides of the tube into the $t=0$ frame. To obtain F we perform an isotopy to E which flattens the surface. To obtain G from E we then `thicken' the red arc by pushing some of the surface from the future into the $t=0$ frame. To obtain H from G we then perform an isotopy of this band, to form a tube with a band removed.}
\end{figure}
\begin{figure}[p]
   \centering
   \begin{tikzpicture}
\node[anchor=south west,inner sep=0] (image) at (0,0) {\includegraphics[width=0.795\textwidth]{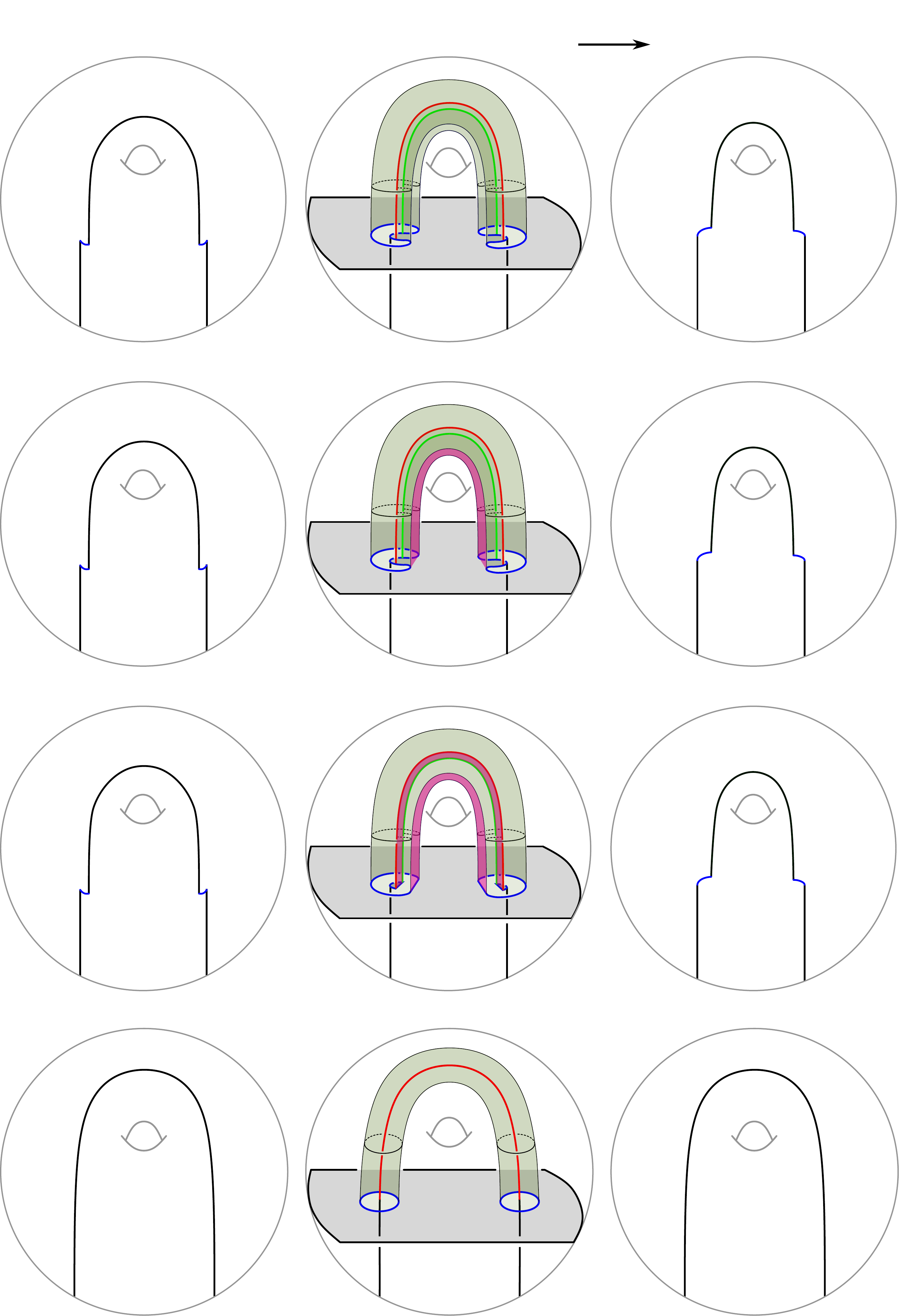}};
  \begin{scope}[x={(image.south east)},y={(image.north west)}]
    \node at (0.683,0.98) { $t$};
    \node[rotate=90] at (0.503,0.97) {\Large $\sim$};
    \node[rotate=90] at (0.503,0.48) {\Large $\sim$};
    \node[rotate=90] at (0.503,0.724) {\Large $\sim$};
    \node[rotate=90] at (0.503,0.232) {\Large $\sim$};
        \node[anchor=west] at (-0.06,0.848){ I:};
    \node[anchor=west] at (-0.06,0.602){ J:};   
    \node[anchor=west] at (-0.06,0.352){ K:};   
    \node[anchor=west] at (-0.06,0.102){ L:};
    \end{scope}
\end{tikzpicture}
      \caption{\label{fig:tubeswapB}Here the time direction is to the right, and we depict 4 stages of the isotopy. To obtain I from Figure~\ref{fig:tubeswapA}.H we perform a small isotopy of the surface. In J we depict a disc intersecting the surface on the boundary of the disc. To obtain K from J we perform a destabilisation by removing a neighbourhood of the boundary of the disc and adding two parallel copies of the disc. To obtain L from K we perform an isotopy (pushing some of the middle band into the future) to obtain the associated tubed surface $\widetilde{S'}$.}
\end{figure}

\begin{figure}[p]
   \centering
   \begin{tikzpicture}
\node[anchor=south west,inner sep=0] (image) at (0,0) {\includegraphics[width=0.92\textwidth]{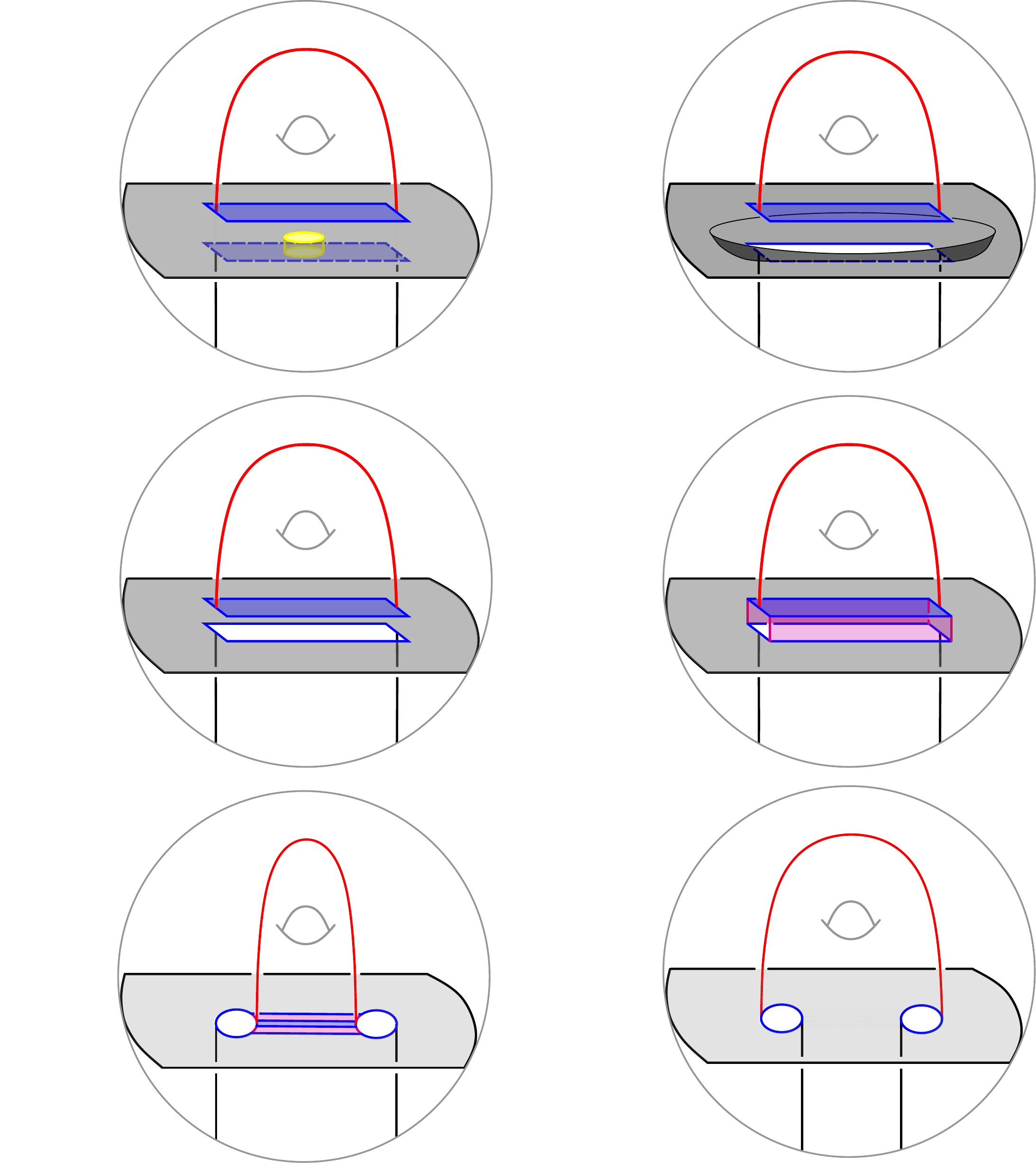}};
  \begin{scope}[x={(image.south east)},y={(image.north west)}]
    \node[anchor=west] at (0.00,0.84) {\Large $\hphantom{\sim}\;\;$ \normalsize A: };
    \node[anchor=west] at (0.00,0.5) {\Large $\sim\;\;$ \normalsize C:};
    \node[anchor=west] at (0.00,0.16) {\Large $\sim\;\;$ \normalsize E:};
    \node[anchor=west] at (0.522,0.16) {\Large $\sim\;\;$ \normalsize F:};
    \node[anchor=west] at (0.522,0.5) {\Large $\sim\;\;$ \normalsize D:};
    \node[anchor=west] at (0.522,0.84) {\Large $\sim\;\;$ \normalsize B:};
    \end{scope}
\end{tikzpicture}
\caption{\label{fig:alternativeorientationswap}A stabilisation and isotopy respecting the alternative possible orientations. We only depict the $t=0$ slice, the past and future frames are as in the previous case, until the final picture where the past and future pictures are swapped. In A we see the associated surface, to which we already performed a stabilisation respecting the alternative orientations. To obtain B from A we perform an isotopy `inflating' the tube from the stabilisation. To obtain B from C we flatten the surface (which deforms the past and future images slightly). To obtain D from C we push the sides of the tube into the present. To obtain E from D we perform an isotopy to flatten the surface. To obtain F from E we perform an isotopy. F is identical to Figure~\ref{fig:tubeswapA}.F, though we note the past and future images are swapped. From this point we proceed as in the previous case, with the past and future images swapped. }      
\end{figure}

The associated tubed surface $\widetilde{S}$ in this parametrisation is depicted in Figure~\ref{fig:tubeswap1}{.A}. We perform a stabilisation between the top of the tube, and the part of surface $f(\nu ( \gamma_i))$  to obtain Figure~\ref{fig:tubeswap1}{.B} (provided the stabilisation pictured respects orientations; we deal below with the case in which it does not). 

To obtain Figure~\ref{fig:tubeswap2}.C from Figure~\ref{fig:tubeswap1}.B we perform an isotopy supported in the $t=0$ frame by `inflating' the tube from the stabilisation. To obtain Figure~\ref{fig:tubeswap2}.D we flatten the resulting bulge, which slightly deforms the surface in other time frames. 

Next, we push the sides of the tube into the $t=0$ frame to obtain Figure~\ref{fig:tubeswapA}.E. Then we perform an isotopy, flattening the resulting dip to obtain Figure~\ref{fig:tubeswapA}.F. To obtain Figure~\ref{fig:tubeswapA}.G from Figure~\ref{fig:tubeswapA}.F, we thicken the red arc $f(\beta)$ by pushing some of the surface from the future into the present. We then perform an isotopy to create a tube with a band removed, depicted in Figure~\ref{fig:tubeswapA}.H. 

To obtain Figure~\ref{fig:tubeswapB}.I from Figure~\ref{fig:tubeswapA}.H  we perform an isotopy. In the same picture, we depict an embedded disc intersecting the surface on its boundary in Figure~\ref{fig:tubeswapB}.J. We then remove an annulus given by a neighbourhood of the boundary of this disc, and replace it with two parallel copies of the disc in the~$t=0$ frame, thus performing a destabilisation, to obtain Figure~\ref{fig:tubeswapB}.K. We then push part of the band containing the arc $f(\beta)$ into the future and perform a small further isotopy to obtain Figure~\ref{fig:tubeswapB}.L. Note that Figure~\ref{fig:tubeswapB}.L is precisely $\widetilde{S'}$.

In the case that the above stabilisation was not compatible with orientations, we instead perform a stabilisation on the underside of the surface to obtain Figure~\ref{fig:alternativeorientationswap}.A. We then perform the sequence of isotopies in Figure~\ref{fig:alternativeorientationswap}, which are analogous to those in the previous case. Note that in Figure~\ref{fig:alternativeorientationswap} only the $t=0$ slice is depicted, the past and future pictures are as in the previous case, until the final picture where the past and future images are swapped (i.e.\ the past images become the future images and \emph{vice versa}). Figure~\ref{fig:alternativeorientationswap}.F is identical to Figure~\ref{fig:tubeswapA}.F, except that the past and future images are swapped. We now proceed as above (with the past and future images swapped). Again we obtain $\widetilde{S'}$, completing the proof of the tube swap lemma. \end{proof}
\section{The Tube Move Lemma}\label{sec:tubemove}
We prove that if we take a single arc $\gamma_i$ in our surface tubing diagram, remove it and replace it with another arc, as in Figure~\ref{fig:tubemovediag0}, there is a single stabilisation and single destabilisation taking one associated surface to the other.
\begin{figure}[h!]
   \centering
   \begin{tikzpicture}
\node[anchor=south west,inner sep=0] (image) at (0,0) {\includegraphics[width=0.7\textwidth]{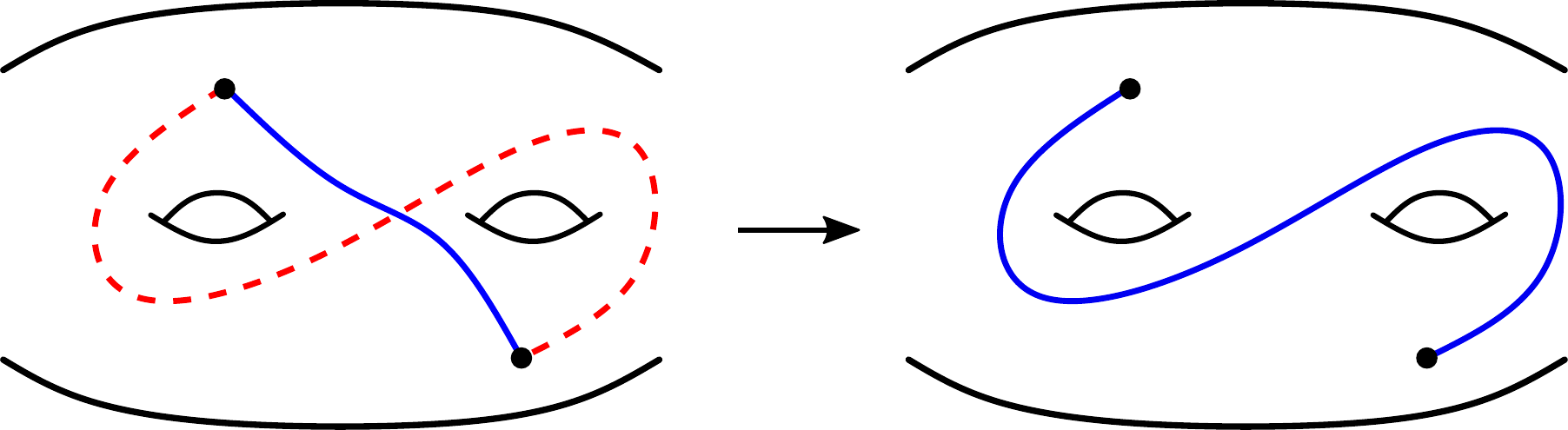}};
  \begin{scope}[x={(image.south east)},y={(image.north west)}]
    \node[text=blue] at (0.69+0.577-0.564,0.19) {$\alpha$};
  \node[text=red] at (0.69-0.564,0.19) {$\alpha$};
  \node[text=blue] at (0.215,0.68) {$\gamma_i$};
  \node at (0.16,0.87) {$x_i^+$};
  \node at (0.295,0.178) {$x_i^-$};

  \node at (0.16+0.577,0.87) {$x_i^+$};
  \node at (0.295+0.577,0.178) {$x_i^-$};

    \end{scope}
\end{tikzpicture}
      \caption{\label{fig:tubemovediag0}A diagram for a tube move. We replace $\gamma_i$ with $\alpha$, which may intersect $\gamma_i$, but not other arcs or points.}
\end{figure}
\begin{lemma}[Tube Move Lemma]\label{tubemove} Given a surface tubing diagram $\mathcal{S}$, let $\alpha$ be an arc in $S$ from $x_i^+$ to $x_i^-$ which is disjoint from the curves $\{\gamma_k\}_{k\neq i}$ $($note that $\alpha$ may intersect~$\gamma_i)$, and is also disjoint from all marked points on the surface other than $x_i^+$ and $x_i^-$. Form $\mathcal{S}'$ by removing the arc~$\gamma_i$ and replacing it with $\alpha $.

Then the associated tubed surface $\widetilde{S}$ can be transformed into $\widetilde{S'}$ by performing a single stabilisation, followed by a destabilisation $($and ambient isotopy$)$.
\end{lemma}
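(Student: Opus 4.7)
The plan is to adapt the pictorial strategy of the Tube Swap Lemma: work inside a suitable tubular neighborhood in $X$, perform a single stabilisation, then an ambient isotopy, then a single destabilisation. The key geometric difference is that now $\alpha$ shares its endpoints $x_i^\pm$ with $\gamma_i$ (rather than connecting the complementary pair $y_i^\pm$). As a result, near each double point $z_i^\pm$ the old and new tubings use the \emph{same} sheet of $S^{\im}$ for the arc's endpoint but a different path in between, so the stabilisation should connect the existing tube directly to a parallel piece of that same sheet along $f(\alpha)$, rather than flipping to the opposite sheet as in the proof of Lemma \ref{tubeswap}.

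Concretely, I would set up in a tubular neighborhood $N$ of $\Gamma_i \cup f(\alpha)$ in $X$. After using Remark \ref{isoremark} to isotope $\alpha$ into generic position in $S$, we may assume $N$ is disjoint from the other arcs and marked points of $\mathcal{S}$, so $\widetilde S$ and $\widetilde{S'}$ agree outside $N$. Inside $N$, the tubed surface $\widetilde S$ is the union of the sheet of $S^{\im}$ containing both $\Gamma_i$ and $f(\alpha)$ with the linking-annulus tube running along $\Gamma_i$. I would then perform a stabilisation attaching a small handle between this tube and the sheet along a subarc of $f(\alpha)$, follow with a sequence of ambient isotopies analogous to those in Figures \ref{fig:tubeswap1}--\ref{fig:tubeswapB}, which ``inflate'' the handle and slide the tube so that it is rerouted along $f(\alpha)$, and finish with a destabilisation removing the residual piece of the old tube along $\Gamma_i$, producing exactly $\widetilde{S'}$. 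Two subcases according to the orientation data at the stabilisation will be treated by an alternative initial configuration as in Figure \ref{fig:alternativeorientationswap}.

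The main obstacle I anticipate is the case when $\mathring{\alpha} \cap \mathring{\gamma_i} \neq \emptyset$, which the hypothesis permits. When the two arcs cross in $S$, the neighborhood $N$ is no longer a clean product and one cannot cleanly view the old and new tubes as sitting in disjoint halves of $N$. One strategy is to first use Remark \ref{isoremark} to isotope $\alpha$ within $S$ (keeping it disjoint from the other arcs and marked points) so as to minimise $|\mathring{\alpha} \cap \mathring{\gamma_i}|$, and then to localise the stabilisation and destabilisation near the endpoints $x_i^\pm$, where the two arcs have matching germs. Ensuring that the final tube along $f(\alpha)$ carries the correct framing (so that the output really is $\widetilde{S'}$ rather than a twisted variant), and that both orientation subcases are handled symmetrically, will likely be the most delicate part of the argument.
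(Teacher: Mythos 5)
Your strategy for the case $\mathring{\alpha}\cap\mathring{\gamma_i}=\emptyset$ is plausible and close in spirit to the paper's Tube Swap argument, but the lemma's entire content lies in the case where the arcs do intersect, and there your proposal has a genuine gap. Isotoping $\alpha$ rel endpoints to minimise $|\mathring{\alpha}\cap\mathring{\gamma_i}|$ does not in general make this intersection empty: the two arcs can be in different homotopy classes rel $\{x_i^+,x_i^-\}$ and then have essential crossings that no isotopy removes. (The paper's remark after the lemma makes essentially this point: when $\gamma_i$ and $\alpha$ intersect, $S\setminus\{\gamma_i,\alpha\}$ may be disconnected, which is exactly the obstruction to reducing to a ``disjoint arcs'' picture.) Your fallback of ``localising the stabilisation and destabilisation near the endpoints $x_i^\pm$, where the two arcs have matching germs'' does not resolve this: a handle attached near the endpoints leaves the body of the tube running along $\Gamma_i$, and no isotopy supported near the endpoints can reroute it through the crossings with $f(\alpha)$. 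Moreover, when the arcs cross, $\Gamma_i\cup f(\alpha)$ is not an embedded circle, so the neighbourhood $N$ you want to work in is not a clean $S^1\times B^3$, and the whole ``swap-like'' picture breaks down.

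The paper's proof uses a different mechanism that sidesteps this entirely: it performs the single stabilisation at one interior point $p\in\Gamma_i$, between the tube and the nearby sheet, and then isotopes so that the tube is \emph{cut open} into two half-tubes whose free ends are capped off on the surface. These capped ends can then be dragged freely along the surface (they only need to avoid the other tubes and marked points, which $\alpha$ avoids by hypothesis --- intersections with $\gamma_i$ are irrelevant because the old tube is being retracted as you go). One retracts the two half-tubes back to $z_i^\pm$ and pushes them forward along $f(\alpha)$, then rejoins them with the single destabilisation by running the cutting movie in reverse. If you want to salvage your approach, you need some version of this cut--drag--rejoin idea (or another argument that genuinely handles essential crossings of $\alpha$ with $\gamma_i$); as written, your proof only establishes the lemma under the additional hypothesis that $\alpha$ can be isotoped off $\gamma_i$.
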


\begin{proof}
\begin{figure}[p]
   \centering
\begin{tikzpicture}
\node[anchor=south west,inner sep=0] (image) at (0,0) {\includegraphics[width=0.92\textwidth]{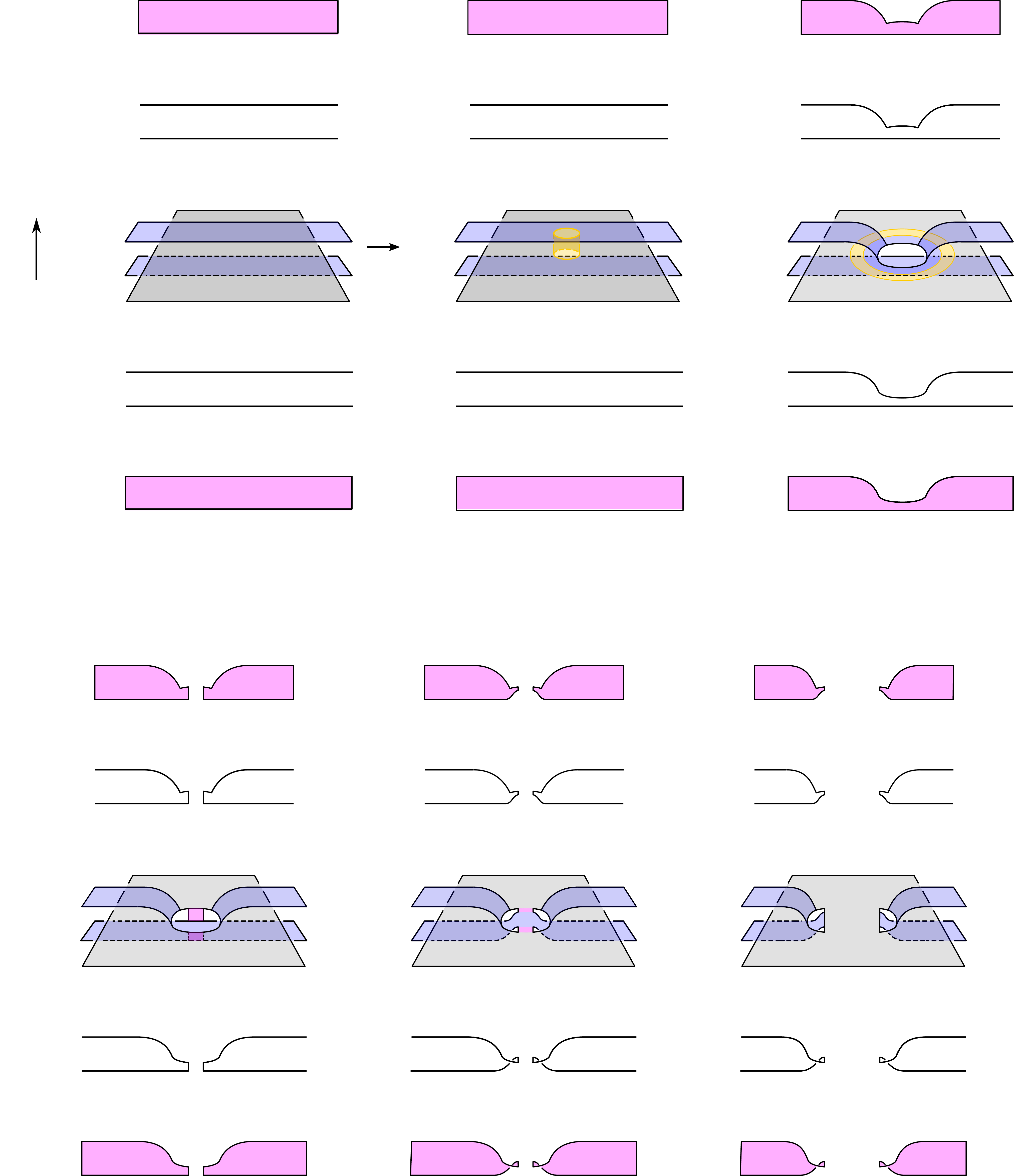}};
  \begin{scope}[x={(image.south east)},y={(image.north west)}]
  \node at (0.091,0.79) {$A:$};
    \node at (0.418,0.79) {$B:$};
    \node at (0.732,0.79) {\Large $\sim\:$\normalsize $C:$};
    \node[anchor=west] at (0,0.225) {\hspace{-0.07cm}\Large $\sim\:$\normalsize $D:$};
        \node[anchor=west] at (0.317,0.225) {\Large $\sim\:$\normalsize $E:$};
         \node[anchor=west] at (0.647,0.225) {\Large $\sim\:$\normalsize $F:$};
    \node at (0.018,0.784) { $t$};
    \end{scope}
\end{tikzpicture}
      \caption{\label{fig:tubemove}A stabilisation and sequence of isotopies allowing us to cut open tubes in order to move their ends about freely. In $A$ we depict a neighbourhood $\nu(p)\subset X$ for some~$p\in\Gamma_i$. Note that $\nu(p)\cong B^4$. The time direction is depicted upwards and in each time frame, we see a copy of $B^3$. To obtain B from A we perform a stabilisation. To obtain C from B we perform an isotopy in the $t=0$ frame, which also slightly deforms the surface in other time frames. To obtain D from C we push part of the sides of the tube into the $t=0$ frame. To obtain E from D we perform an isotopy flattening the surface. Finally, in F we see two disjoint tubes which join the surface at their ends as pictured. These ends may now be pushed about the surface. When we later rejoin the tubes we read the pictures in reverse order.}
\end{figure}

We direct the reader to Figure~\ref{fig:tubemove}. In Figure~\ref{fig:tubemove}.A, we see the associated tubed surface $\widetilde{S}$ in a neighbourhood of a point $p\in \Gamma_i$ which intersects the linking annulus of~$\Gamma_i$ on a smaller annulus. First, we perform a stabilisation between the tube and the surface to obtain Figure~\ref{fig:tubemove}.B (note that if this is not compatible with orientations, we instead perform the stabilisation on the underside and proceed in the same way, taking the mirror image of each figure). We then perform an isotopy to obtain Figure~\ref{fig:tubemove}.C, then push part of the sides of the tube into the present to obtain Figure~\ref{fig:tubemove}.D. We then perform a further isotopy to obtain Figure~\ref{fig:tubemove}.E. Here we see two disconnected tubes, whose ends join the surface as depicted in Figure~\ref{fig:tubemove}.F.

We can now move these ends around freely on the surface, provided during the isotopy they are disjoint from the other tubes and double points. We depict how we drag the ends diagrammatically in Figure~\ref{fig:tubemovediag}. First, we retract the two tubes along $\Gamma_i$ dragging the ends towards $z_i^+$ and $z_i^-$. We then perform an isotopy that pushes these ends along~$\alpha$ so that the two tubes now run along $\alpha$ and the ends of the tubes lie in a neighbourhood of some point $q\in f(\alpha)$. In this neighbourhood, we see the final picture of Figure~\ref{fig:tubemove}. We then rejoin the tubes, by performing an isotopy and destabilisation which can be seen by reading the pictures in Figure~\ref{fig:tubemove} in reverse order. After rejoining the tubes they form the linking annulus of $\alpha$. The resulting surface is $\widetilde{S'}$ as required. This completes the proof of the tube move lemma.
\end{proof}
\begin{figure}[h!]
   \centering
   \begin{tikzpicture}
\node[anchor=south west,inner sep=0] (image) at (0,0) {\includegraphics[width=0.9\textwidth]{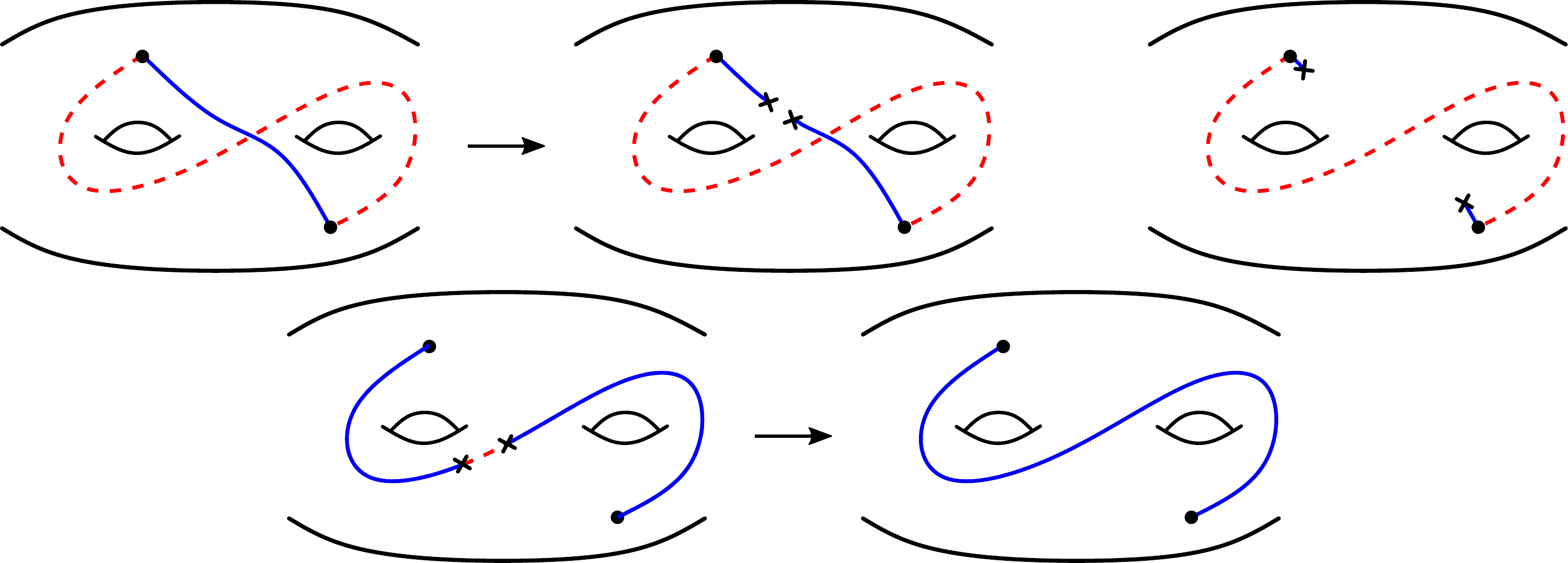}};
  \begin{scope}[x={(image.south east)},y={(image.north west)}]
    \node at (0.69,0.73) {\Large $\sim$};
    \node at (0.14,0.197) {\Large $\sim$};
    \node at (0.065,0.92) {$x_i^+$};
    \node at (0.18,0.6) {$x_i^-$};
    \node[text=red] at (0.073,0.62) {$\alpha$};
    \node[text=blue] at (0.14,0.84) {$\gamma_i$};
    \end{scope}
\end{tikzpicture}
      \caption{\label{fig:tubemovediag}A schematic of the proof of the tube move lemma. The second image depicts the cut-open tube. The crosses depict where the ends of the cut-open tube join the surface. We then show how to move these ends in the next two equivalences. To obtain to the final picture we rejoin the cut open ends.}
\end{figure}
\begin{remark}
Note that we cannot prove Lemma~\ref{tubemove} by using Lemma~\ref{tubeswap} twice, due to the condition in Lemma~\ref{tubeswap} that the new arc is disjoint from the old one. To do so we would need to find an intermediate arc $\beta$ from $y_i^+$ to $y_i^-$ disjoint from both $\gamma_i$ and $\alpha$, but if $\gamma_i$ and $\alpha$ intersect, then $S\setminus \{\gamma_i,\alpha\}$ may be disconnected so this may not be possible.
\end{remark}

\section{Proof of Theorem~\ref{ineq}}\label{sec:mainproof}
With these tools in place, we proceed with the proof of Theorem~\ref{ineq}, namely that $d_{\st}(\Sigma,\Sigma')\leq d_{\sing}(\Sigma,\Sigma')+1$. We do so by shadowing a regular homotopy by a sequence of embedded surfaces differing by stabilisation, destabilisation, and ambient isotopy.
\begin{proof}[Proof of Theorem~\ref{ineq}]
Recall that $\Sigma$ and $\Sigma'$ are immersed surfaces in $X$ of genus $g$. In the case the surfaces are not regularly homotopic $d_{\sing}(\Sigma,\Sigma')=\infty$ and we are done. Hence assume that $\Sigma$ and $\Sigma'$ are regularly homotopic and suppose $d_{\sing}(\Sigma,\Sigma')=n$. Given distance minimising regular homotopy from~$\Sigma$ to $\Sigma'$, let $P_1,\ldots P_k$ be the sequence of immersed surfaces describing this homotopy, each differing from the previous either a finger move, a Whitney move, or an ambient isotopy.

We shall describe a sequence of surface tubing diagrams with associated tubed surfaces that differ by stabilisations, destabilisations, and ambient isotopy, such that the genus of any intermediate surface never exceeds $g+n+1$.

Fix the abstract surface $S$, and immersions $f_i\colon  S\rightarrow X$ with image $P_i$ for each $i$. The immersion $f_1\colon S\rightarrow X$ gives a surface diagram $\mathcal{S}_1$ (the empty diagram in $S$) with associated tubed surface $\widetilde{S}_1=S_1^{\im}=P_1$.

We now suppose for induction that we have a surface tubing diagram $\mathcal{S}_i$ with immersion data $f_i\colon S\rightarrow X$, and associated tubed surface $\widetilde{S}_i$. We will construct a surface tubing diagram $\mathcal{S}_{i+1}$ with immersion data $f_{i+1}\colon S\rightarrow X$, such that $\widetilde{S}_{i+1}$ differs from $\widetilde{S}_i$ by a series of stabilisations, destabilisations, and ambient isotopy, such that the genus of any intermediate surface does not exceed $g+n+1$. There are three cases; $P_{i+1}$ is obtained from $P_i$ by ambient isotopy, a finger move, or a Whitney move.

\textit{Ambient Isotopy:} If $P_{i+1}$ just differs from~$P_i$ by ambient isotopy, by Remark~\ref{isoremark}(1) we have a new surface tubing diagram $\mathcal{S}_{i+1}$ with immersion data $f_{i+1}\colon S\rightarrow X$. Furthermore, $\widetilde{S_i}$ is ambiently isotopic to $\widetilde{S}_{i+1}$.

\textit{Finger Move:} If $P_{i+1}$ differs from $P_i$ by a finger move, let $\alpha$ and $\beta$ be the Whitney arcs of the Whitney disc that undoes the finger move. Note these arcs are in $S$, the abstract surface. Before we perform the finger move, we perform an isotopy of the arcs, to push any arcs off $\alpha$ and $\beta$ to form $\mathcal{S}_i'=(f_i,\{\gamma_j'\})$. We then add the new double points and $\beta$ to the diagram to obtain diagram $\mathcal{S}_{i+1}=(f_{i+1},\{\gamma_j'\}\cup\{\beta\})$ which we note uses the new map $f_{i+1}$, which differs from $f_i$ by the finger move; see Figure~\ref{fig:fingermovediag}. The associated tubed surfaces $\widetilde{S_i}$ and $\widetilde{S'_i}$ are ambiently isotopic by Remark~\ref{isoremark}(2), and $\widetilde{S'_i}$ and $\widetilde{S}_{i+1}$ differ by isotopy and a stabilisation, as in Figure~\ref{fig:fingermoveiso}.

\begin{figure}[!h]
   \centering 
\begin{tikzpicture}
\node[anchor=south west,inner sep=0] (image) at (0,0) {\includegraphics[width=0.8\textwidth]{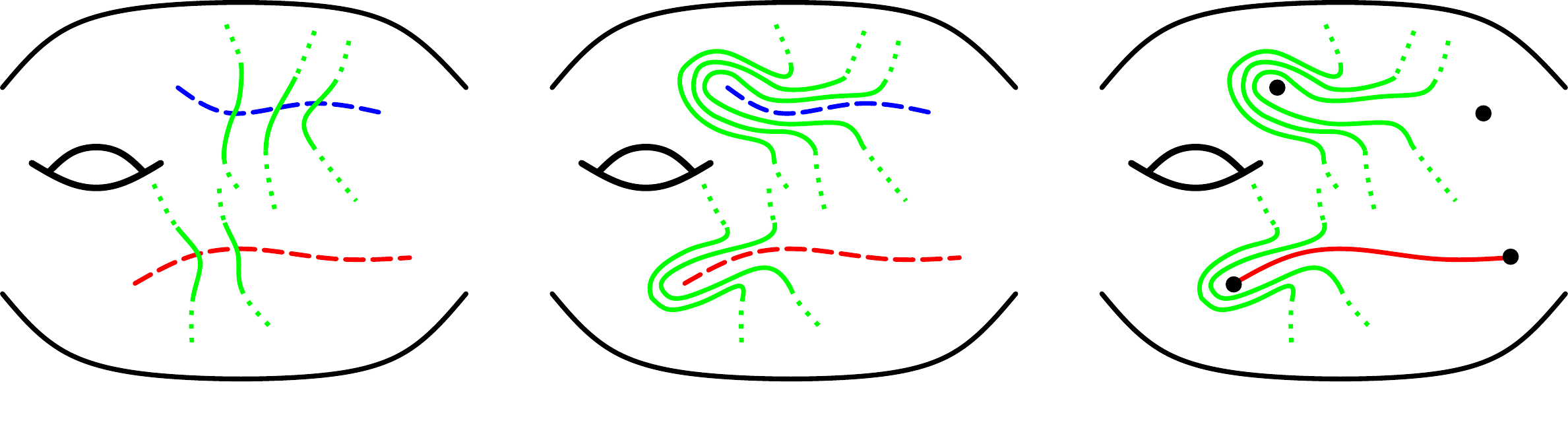}};
  \begin{scope}[x={(image.south east)},y={(image.north west)}]
  \node[text=red] at (0.23,0.32) {$\beta$};
  \node[text=blue] at (0.23,0.69) {$\alpha$};
    \node[anchor=south west] at (0.132,-0.04) {$\mathcal{S}_i\hspace{4.7cm}\mathcal{S}_i'\hspace{4.7cm}\mathcal{S}_{i+1}$};
    \end{scope}
\end{tikzpicture}
      \caption{\label{fig:fingermovediag}The sequence of surface tubing diagrams corresponding to performing a finger move.}
\end{figure}
\begin{figure}[!ht]
   \centering 
   \begin{tikzpicture}
\node[anchor=south west,inner sep=0] (image) at (0,0) {\includegraphics[width=0.97\textwidth]{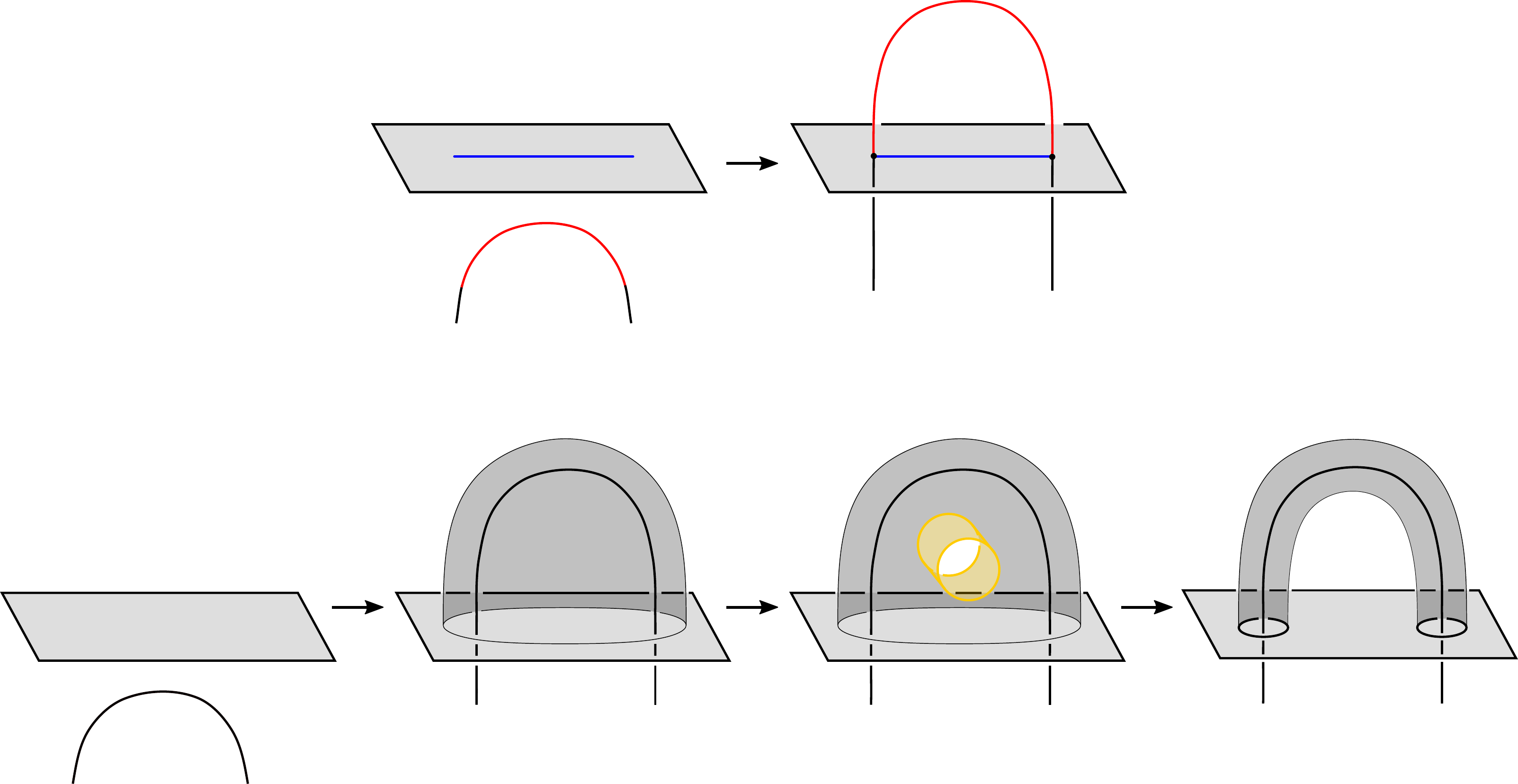}};
  \begin{scope}[x={(image.south east)},y={(image.north west)}]
    \node[text=red] at (0.7,0.96) {$\Gamma_i$};
    \end{scope}
\end{tikzpicture}
      \caption{\label{fig:fingermoveiso}Above we see the immersed surface before and after a finger move. Below we perform a sequence of isotopies and stabilisations taking $\widetilde{S}_i'$ to $\widetilde{S}_{i+1}$. First, we isotope one sheet to create a Whitney bubble and push the other sheet into this bubble. We then stabilise the bubble to create the linking annulus of $\Gamma_i$.}
\end{figure}
\textit{Whitney Move:} The Whitney moves present the main difficulty. If $P_{i+1}$ differs from~$P_i$ by a Whitney move, there are several cases to consider.

The endpoints of the Whitney arcs form the set $\{x_i^+,y_i^+,x_j^-,y_j^-\}$ for some~$i$ and~$j$. In the case that $i=j$ then either~$\partial \alpha =\{x_i^+,x_i^-\}$ and~$\partial \beta =\{y_i^+,y_i^-\}$, or~$\partial \alpha =\{x_i^+,y_i^-\}$ and~$\partial \beta =\{y_i^+,x_i^-\}$ (up to relabeling of $\alpha$ and $\beta$). We call the former situation Case 1 and the latter situation Case 2. Case~2 is the crossed Whitney disc .

Otherwise, if $i\neq j$ then either $\partial \alpha =\{x_i^+,x_j^-\}$ and $\partial \beta =\{y_i^+,y_j^-\}$, or~$\partial \alpha =\{x_i^+,y_j^-\}$ and~$\partial \beta =\{y_j^+,x_i^-\}$ (again up to relabeling of $\alpha$ and $\beta$). We call the former situation Case 3 and the latter Case 4.

Note that $\alpha$ and $\beta$ may intersect other arcs, including $\gamma_i$ and $\gamma_j$. In all cases, we perform a small isotopy of all the arcs so that they intersect $\alpha$ and $\beta$ transversely.

\textbf{Case 1:} \textit{The Whitney arcs are $\alpha$ between $x_i^+$ and $x_i^-$, and $\beta$ between $y_i^+$ and $y_i^-$ for some $i$.} We wish to run in reverse what we did in the case of a finger move, however, we need $\gamma_i$ to run over either $\alpha$ or $\beta$, and we also need $\alpha$ and $\beta$ to be disjoint from other arcs, so that the surface tubing diagram looks like the diagram at the end of a finger move, as in Figure~\ref{fig:fingermovediag}. 

\begin{figure}[!h]
	\centering 
	\begin{tikzpicture}
	\node[anchor=south west,inner sep=0] (image) at (0,0) {\includegraphics[width=0.9\textwidth]{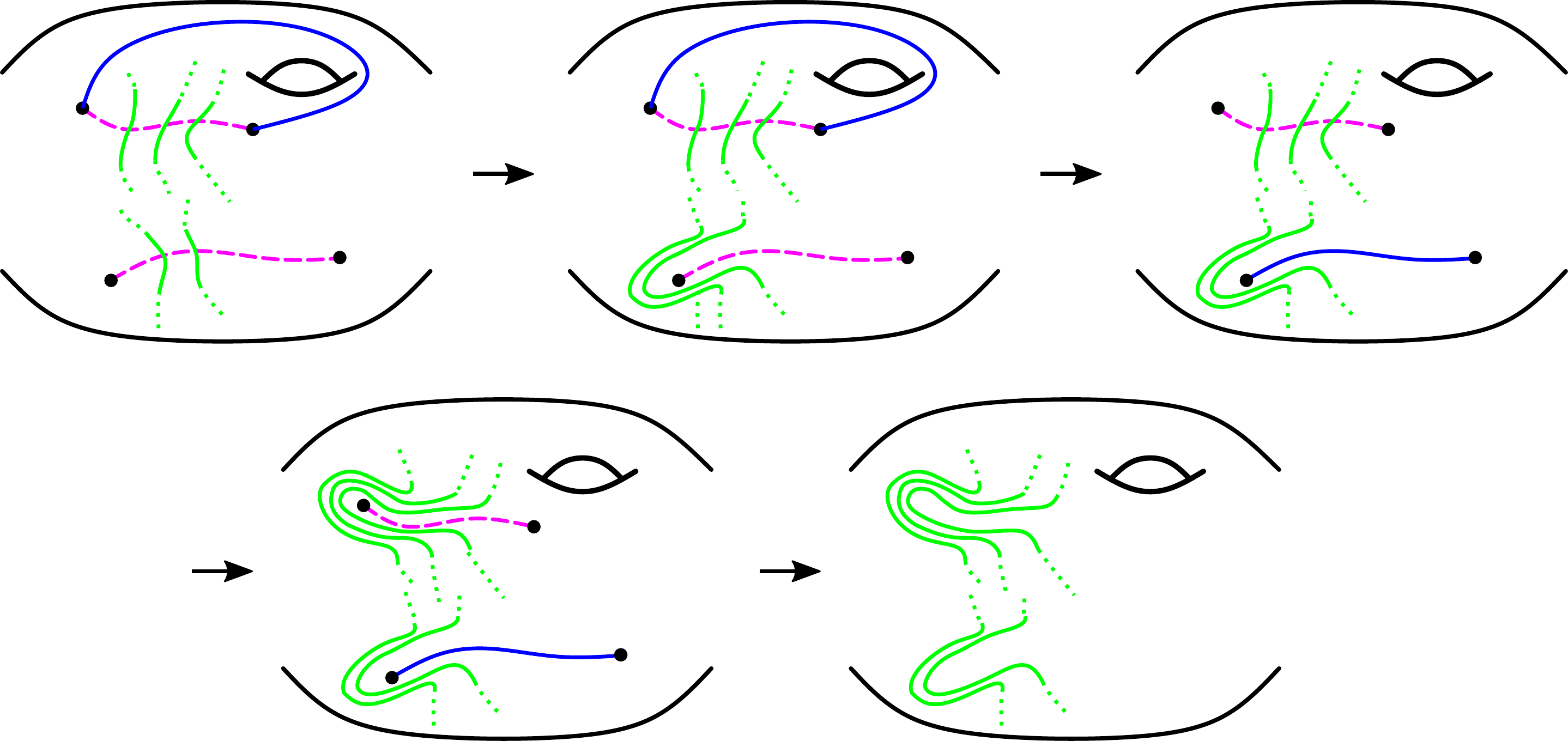}};
	\begin{scope}[x={(image.south east)},y={(image.north west)}]
	\node at (0.03,0.85) {$x_i^+$};
	\node at (0.19,0.8) {$x_i^-$};
	\node at (0.05,0.65) {$y_i^+$};
	\node at (0.24,0.66) {$y_i^-$};
	\node[text=mpurp] at (0.14,0.79) {$\alpha$};
	\node[text=mpurp] at (0.18,0.6) {$\beta$};
	\node[text=blue] at (0.25,0.86) {$\gamma_i$};
	\end{scope}
	\end{tikzpicture}
	\caption{\label{fig:case1diag}The sequence of tubing diagrams which correspond to performing a Case 1 Whitney move. First, we move all arcs off $\beta$, then swap $\gamma_i$ to $\beta$, then move all arcs off $\alpha$. The resulting diagram corresponds to that at the end of a finger move. We can now destabilise the associated surface to obtain the associated surface for the final diagram (which uses the new immersion data), by running Figure~\ref{fig:fingermoveiso} backwards.}
\end{figure}

We arrange this by first using the tube move lemma to move any arcs off $\beta$. To do so we remove intersection points with $\beta$ one by one. We consider the arc $\gamma_r$ which has the intersection with $\beta$ closest  to $y_i^+$ (in the sense of distance along $\beta$), at $p\in \beta$. We form the arc $\gamma_r'$, by removing an arc neighbourhood of $p$ from $\gamma_r$, and replacing it with an arc that runs along the boundary of a small neighbourhood of $\beta\subset S$; see Figure~\ref{fig:case1diag}. Provided the neighbourhood is taken to be sufficiently small, $\gamma_r'$ is disjoint from $\{\gamma_p\}_{p\neq r}$ and from marked points other than $x_r^\pm$, and so the corresponding associated surfaces differ by a stabilisation and destabilisation by the tube move lemma. We do this for every intersection point of arcs with $\beta$, until all arcs are disjoint from $\beta$. Note that we may move $\gamma_i$ during this process.

Next, we use the tube swap lemma to swap $\gamma_i$ to $\beta$. We then use the tube move lemma to move any arcs off $\alpha$ as we did for $\beta$; see the third to fourth picture of Figure~\ref{fig:case1diag}. Finally, we perform the destabilisation and isotopy coming from reading Figure~\ref{fig:fingermoveiso} in reverse. This has the effect of removing the points $x_i^\pm$ and $y_i^\pm$, and the arc $\gamma_i$ (which now runs along $\beta$) from the diagram.

\begin{figure}[!h]
\centering
\begin{tikzpicture}
\node[anchor=south west,inner sep=0] (image) at (0,0) {\includegraphics[width=0.8\textwidth]{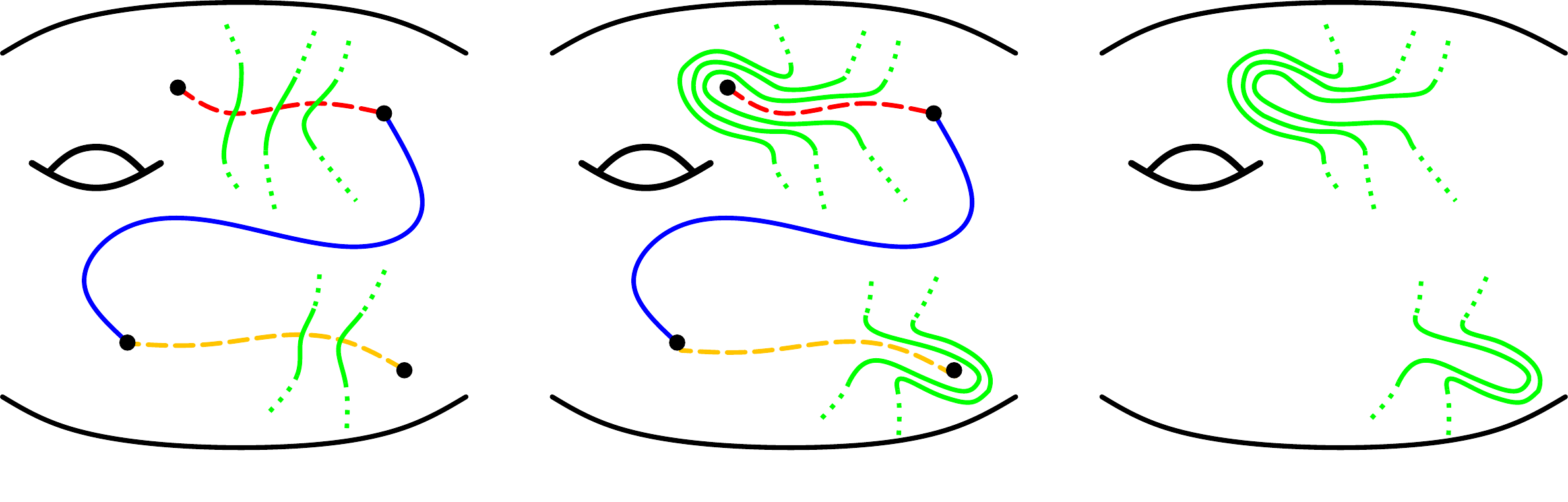}};
  \begin{scope}[x={(image.south east)},y={(image.north west)}]
  \node[text=myo] at (0.15,0.25) {$\beta$};
  \node[text=blue] at (0.14,0.48) {$\gamma_i$};
  \node[text=red] at (0.167,0.84) {$\alpha$};
  \node at (0.09,0.84) {$y_i^-$};
  \node at (0.265,0.835) {$x_i^+$};
  \node at (0.285,0.3) {$y_i^+$};
  \node at (0.05,0.289) {$x_i^-$};
  \node[anchor=south west] at (0.12,-0.05) {$\mathcal{S}_i$}; 
  \node[anchor=south west] at (0.48,-0.05) {$\mathcal{S}_i'$};
  \node[anchor=south west] at (1-0.18,-0.05) {$\mathcal{S}_{i+1}$};    \end{scope}
\end{tikzpicture}
\caption{\label{fig:case2diag}The sequence of diagrams corresponding to a Case 2 Whitney move. First, we remove all arcs running over $\beta$ and $\alpha$ using the tube move lemma, then claim we may remove the resulting tube and intersection points using a stabilisation and two destabilisations.}
\end{figure}

\textbf{Case 2:} \textit{The Whitney arcs are $\alpha$ between $x_i^+$ and $y_i^-$, and $\beta$ between $y_i^+$ and $x_i^-$ for some $i$.} This is a crossed Whitney disc. As in Case 1, we first remove any arcs running over $\alpha$ and $\beta$ using the tube move lemma, which may move $\gamma_i$; see Figure~\ref{fig:case2diag}. Let $\mathcal{S}_i'$ be the resulting surface diagram, and $\mathcal{S}_{i+1}$ be the diagram with the same points and arcs but with the points $x_i^\pm$ $y_i^\pm$, the arc $\gamma_i$ deleted, and immersion data $f_{i+1}$; See Figure~\ref{fig:case2diag}.

\begin{figure}[p]
   \centering 
   \begin{tikzpicture}
\node[anchor=south west,inner sep=0] (image) at (0,0) {\includegraphics[width=\textwidth]{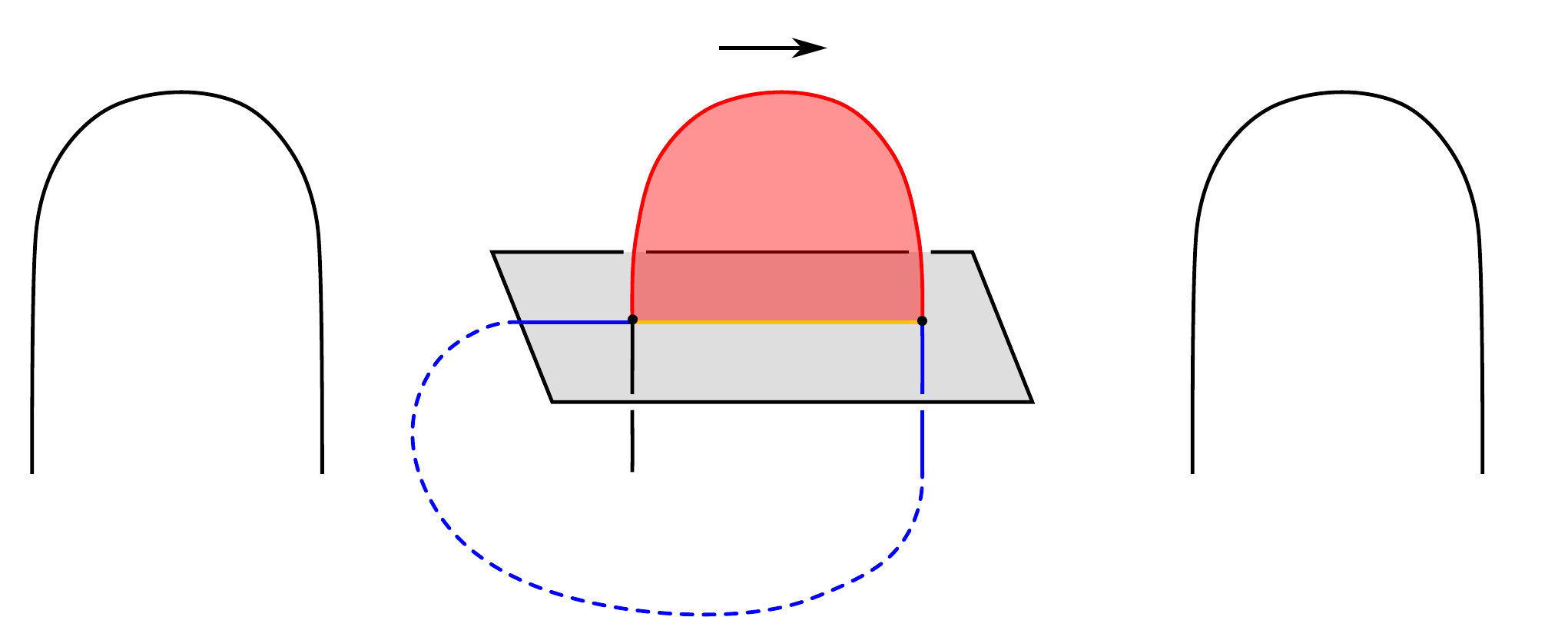}};
  \begin{scope}[x={(image.south east)},y={(image.north west)}]
    \node at (0.492,0.98) { $t$};
    \node[text=blue] at (0.34,0.132) { $\Gamma_i$};
    \end{scope}
\end{tikzpicture}
      \caption{\label{fig:crossedwhitney}The immersed surface in a neighbourhood of a crossed Whitney disc. The arc $\Gamma_i$ joins the two sheets, running over the surface outside of this 4-ball.\\}
   \centering 
\begin{tikzpicture}
\node[anchor=south west,inner sep=0] (image) at (0,0) {\includegraphics[width=\textwidth]{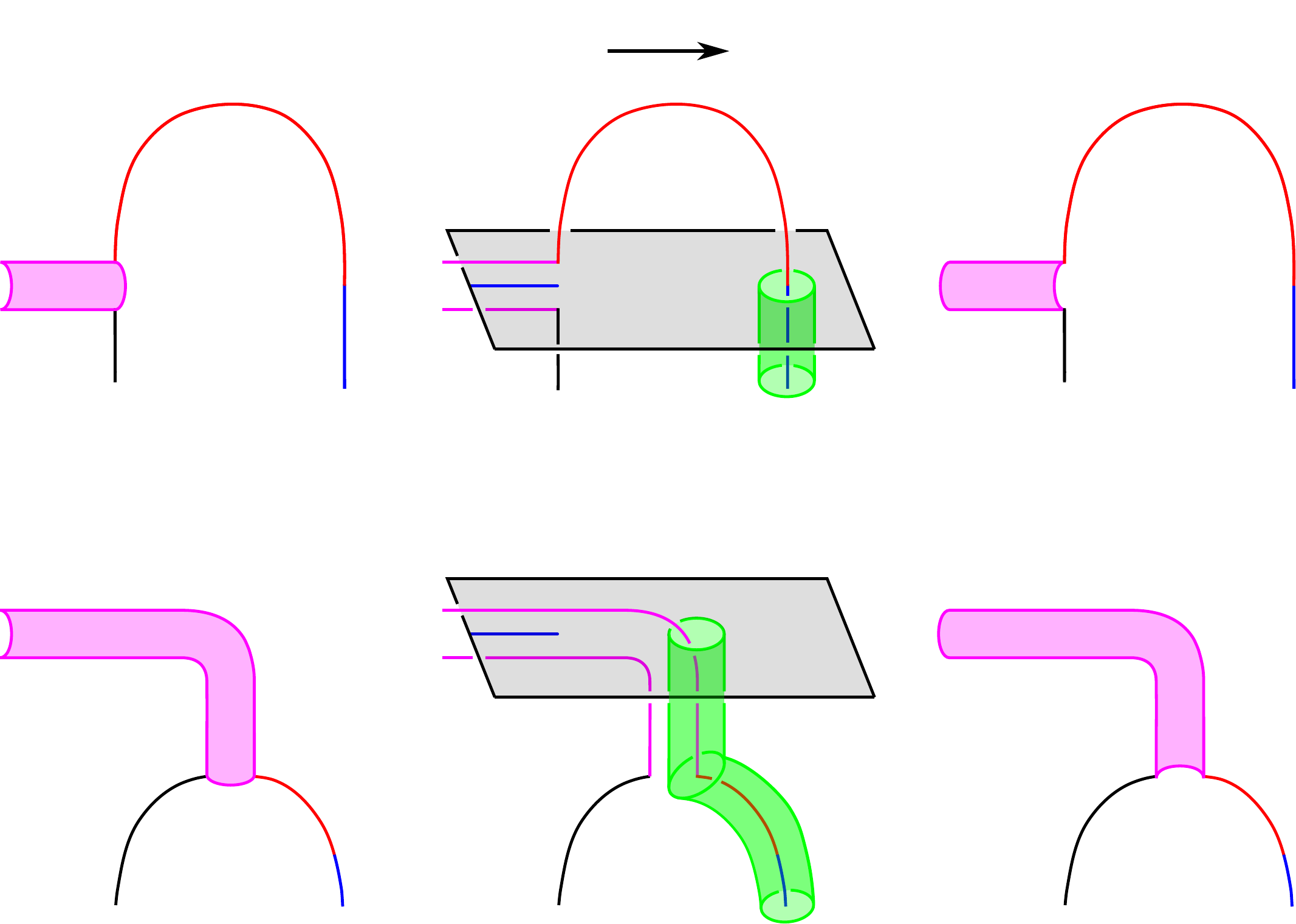}};
  \begin{scope}[x={(image.south east)},y={(image.north west)}]
    \node at (0.512,0.97) { $t$};
    \node[rotate=90] at (0.512,0.47) {\Large $\sim$};
    \node[anchor=west] at (0.56,0.195) {\Large $\leftarrow$ \normalsize Hopf Link$\times [0,1]$};
    \end{scope}
\end{tikzpicture}
\caption{\label{fig:creationofdoubletube}The associated tubed surface $\widetilde{S_i'}$. We pull the two sheets of the associated tubed surface apart, at the expense of creating a double tube. Note the pink and green tubes join up outside of this picture, and form the linking annulus for $\Gamma_i$. This is the same operation as that Gabai describes in \cite[Figure 5.9]{Gabai2017}.}
\end{figure}
\begin{figure}[p]
   \centering
\begin{tikzpicture}
\node[anchor=south west,inner sep=0] (image) at (0,0) {\includegraphics[width=0.85\textwidth]{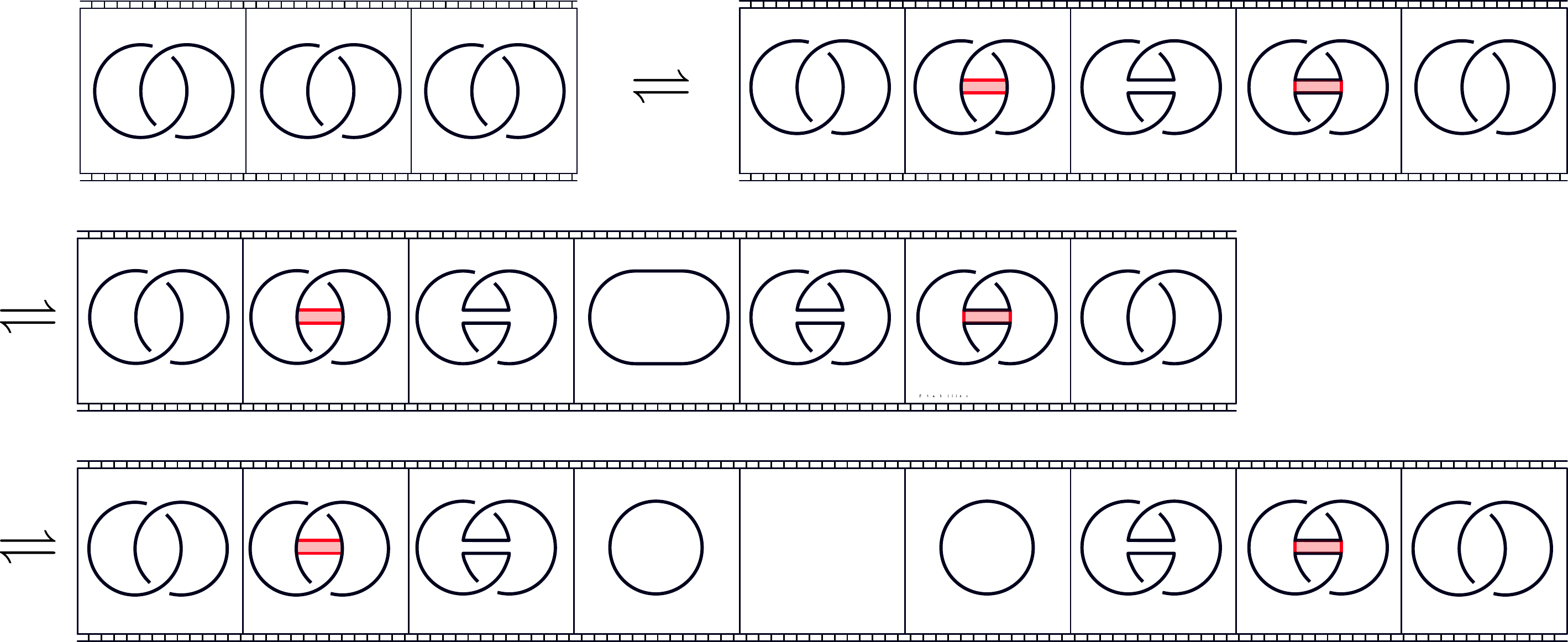}};
  \begin{scope}[x={(image.south east)},y={(image.north west)}]

    \end{scope}
\end{tikzpicture}   

\caption{\label{fig:cutthetube}The first image depicts part of the double tube. To obtain the second we perform a stabilisation. We then perform an isotopy to obtain the third, in which we see an unknot$\times [0,1]$ in the middle time frame. To obtain the fourth we perform a destabilisation along this unknot.\\}

   \centering 
   \begin{tikzpicture}
\node[anchor=south west,inner sep=0] (image) at (0,0) {\includegraphics[width=0.84\textwidth]{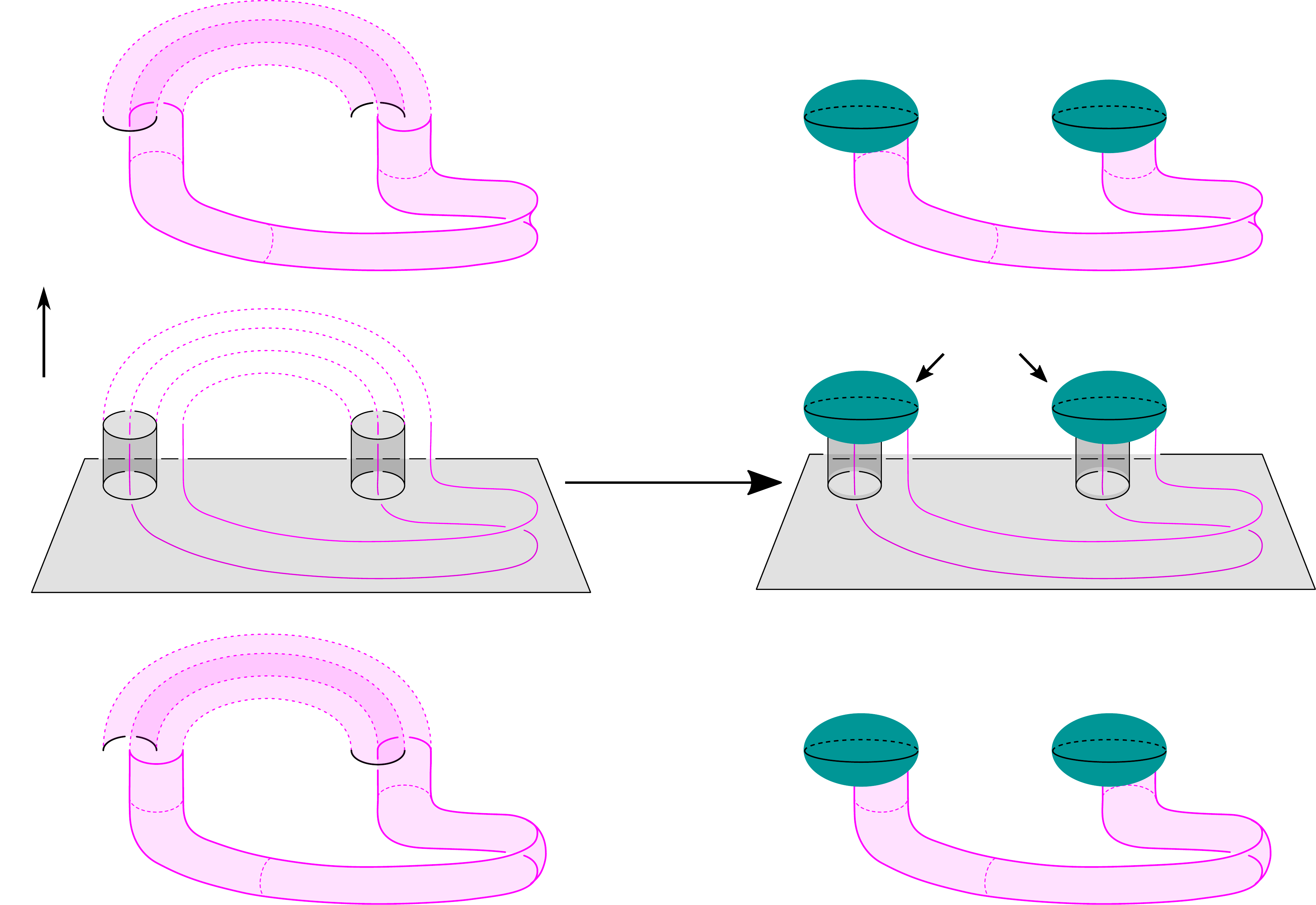}};
  \begin{scope}[x={(image.south east)},y={(image.north west)}]
    \node [anchor=west] at (0.41,0.55) [text width=2cm,align=left] {Cut the double tube};
    \node [anchor=west] at (0.5,0.64) [text width=7cm,align=left] {$4$-balls containing the capped off ends};
    \node [anchor=west] at (0,0.63) {$t$};
    \end{scope}
\end{tikzpicture}
      \caption{\label{fig:diagramfordisc}The surface in a neighbourhood of $\Gamma_i$. On the left we depict part of the double tube; we show how the double tube joins up using the dotted arcs (though this does not happen in this $4$-ball). On the right, we depict part of the slice surface $K$; we do not draw $K$ inside the two 4-balls indicated. Inside these two $4$-balls the surface is given by standard annulus bounded by the Hopf link, whose interior has been pushed into the $4$-ball. To obtain the surface on the right from that on the left, we cut open the double tube as in Figure \ref{fig:cutthetube} and retract the two capped off ends so that they lie near the surface, inside the neighbourhood of $\Gamma_i$.}\
\end{figure}

We show $\widetilde{S_i'}$ and $\widetilde{S}_{i+1}$ are related by a stabilisation and two destabilisations (and isotopy). We depict the immersed surface $S_i'^{\im}$ in a neighbourhood of the Whitney disc in Figure~\ref{fig:crossedwhitney}. The associated tubed surface in this neighbourhood is shown in Figure~\ref{fig:creationofdoubletube}. To obtain the second picture of Figure~\ref{fig:creationofdoubletube} we perform an isotopy which pulls apart the surface (taking the sheets of the surface to where they need to be after the Whitney move), at the expense of creating a \emph{double tube} again using the terminology of Gabai \cite{Gabai2017}, a Hopf link$\times [0,1]$ running through~$X$ between two disc neighbourhoods of the surface. The Hopf link$\times [0,1]$ has four boundary components, two of which are joined to the surface (the top of the green tube and bottom of the pink), while the other two join to the linking annulus of $\Gamma_i$ (the bottom of the green and top of the pink). 

We now perform a stabilisation inside the Hopf link$\times [0,1]$, which can be seen in the movie picture as attaching two bands (provided this is compatible with orientation, if not see below); see Figure~\ref{fig:cutthetube}. The middle picture of this movie is then an unknot, along which we perform a destabilisation to cut the double tube; again see Figure~\ref{fig:cutthetube}. 

The resulting `end' of the double tube can be made by taking an open double tube, adding a band between the two tubes, then capping off the resulting boundary circle with a disc. It can also be thought of as the standard annulus whose boundary is the Hopf link whose interior is pushed into the 4-ball. We now suck back these ends to be close to the surface. The resulting surface differs from $\widetilde{S}_{i+1}$ only in a 4-ball neighbourhood of the arc $\Gamma_i$. The resulting capped off double tube in this 4-ball neighbourhood is pictured on the right in Figure~\ref{fig:diagramfordisc}. In this neighbourhood, we see a genus 1 slice surface for the unknot. We call this slice surface $K$.

If the above stabilisation was not compatible with the orientation on the double tube, we instead perform a different stabilisation and destabilisation which are compatible with the alternative possible orientations; see Figure~\ref{fig:unorientabletubecut}. Again the result is a cut open double tube, and we again suck the ends back to lie in a 4-ball neighbourhood of $\Gamma_i$. We call the resulting surface in this neighbourhood $K'\subset B^4$ and note the boundary of $K'$ is the unknot in $S^3$ as before.
\begin{figure}[h]
   \centering \includegraphics[width=0.92\textwidth]{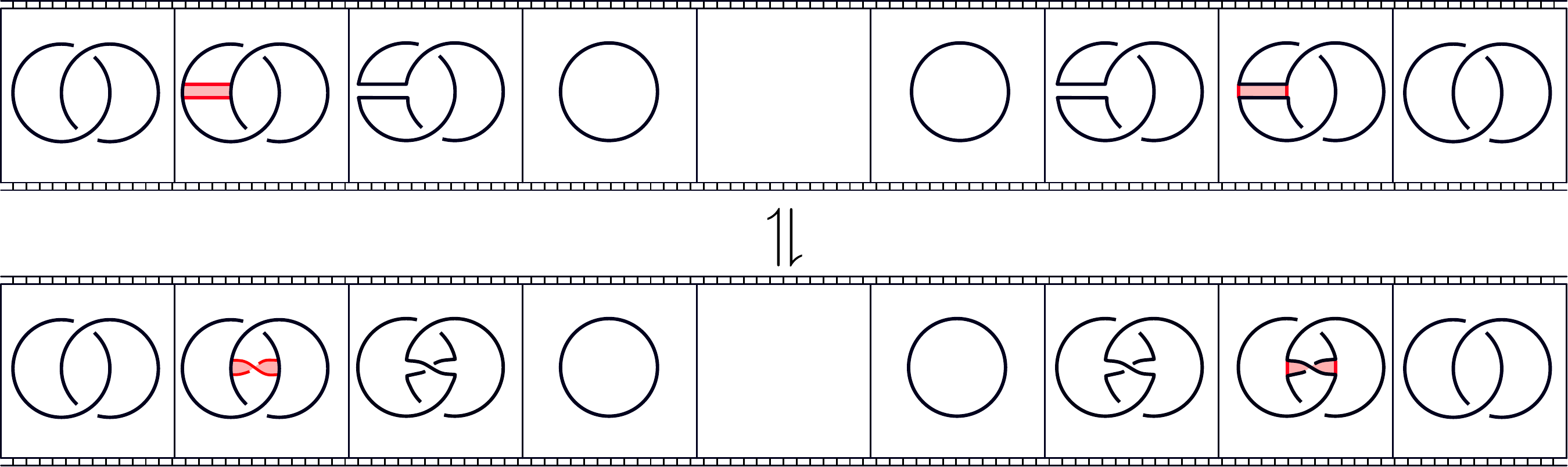}
      \caption{\label{fig:unorientabletubecut}Cutting the double tube using a stabilisation and a destabilisation which are compatible with the other possible orientation of the double tube. We perform an isotopy of the surface to obtain the second picture which shows the result differs by just a twist in the bands.}
   \centering
\end{figure}

We now wish to construct a sequence of stabilisations and destabilisations taking $K$ and $K'$ to the trivial disc. Such a sequence would remove the mess of tubes, and take our surface to $\widetilde{S}_{i+1}$ as required. We exhibit such a sequence in Lemma~\ref{killthe2knot}. Note that results in \cite{InancBaykur2015} imply that some sequence of stabilisations and destabilisations exists but we show that in fact, one destabilisation is sufficient, and so the genus of any intermediate surface does not exceed $g+n+1$.

\textbf{Case 3:}  \textit{The Whitney arcs consist of an arc $\alpha$ between $x_i^+$ and $x_j^-$, and an arc $\beta$ between $y_i^+$ and $y_j^-$ for $i\neq j$.} This case presents the most diagrammatic difficulty. We wish to remove the intersections of arcs with $\alpha$ and $\beta$, however this is made difficult by the fact that $\gamma_i$ and $\gamma_j$ may intersect $\alpha$ and $\beta$ in a complicated way; see Figure~\ref{fig:tubejoiningdiag}.

\begin{figure}[h]
   \begin{tikzpicture}
\node[anchor=south west,inner sep=0] (image) at (0,0) {\includegraphics[width=0.88\textwidth]{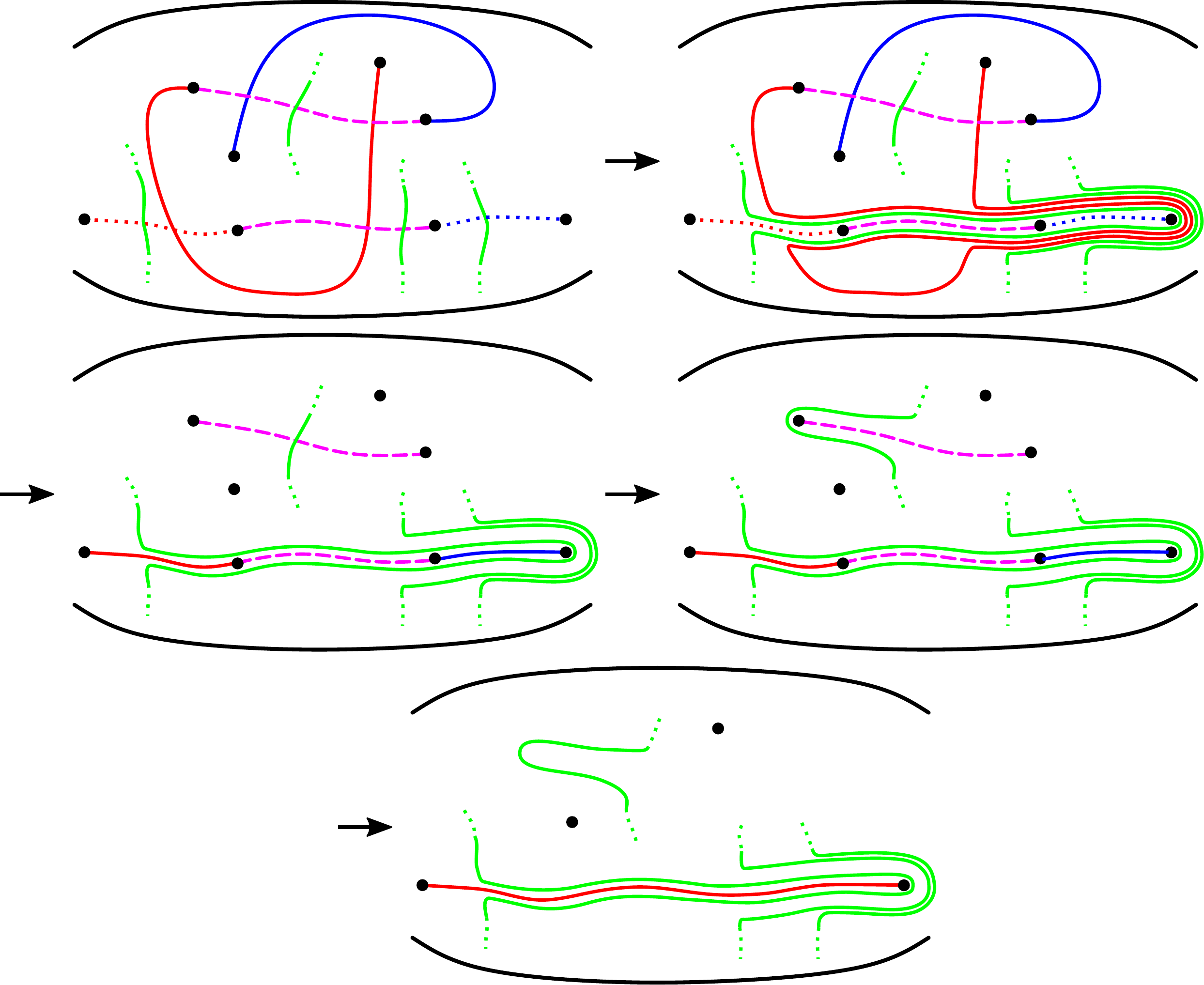}};
  \begin{scope}[x={(image.south east)},y={(image.north west)}]

    \node[mpurp] at (0.27,0.8*0.7+0.303) { $\alpha$};
     \node[mpurp] at (0.26,0.705*0.7+0.303) { $\beta$};
    \node[blue] at (0.43,0.875*0.7+0.303) { $\gamma_j$};
    \node[red] at (0.26,0.6*0.7+0.303) { $\gamma_i$};
     \node[blue] at (0.44,0.643*0.7+0.303) { $a_j$};
    \node[red] at (0.103,0.64*0.7+0.303) { $a_i$};
    \node at (0.17,0.903*0.7+0.303) { $x_i^+$};
    \node at (0.3,0.91*0.7+0.303) { $x_i^-$};
    \node at (0.367,0.86*0.7+0.303) { $x_j^-$};
    \node at (0.176,0.766*0.7+0.303) { $x_j^+$};
    
    \node at (0.203,0.625*0.7+0.303) { $y_i^+$};
    \node at (0.07,0.72*0.7+0.303) { $y_i^-$};
    \node at (0.37,0.63*0.7+0.303) { $y_j^-$};
    \node at (0.49,0.71*0.7+0.303) { $y_j^+$};
    \end{scope}
\end{tikzpicture}
      \caption{\label{fig:tubejoiningdiag}The sequence of diagrams corresponding to a Case 3 Whitney move. We first pick arcs $a_i$ and $a_j$ disjoint from $\alpha$ and $\beta$. To obtain the second diagram we use the tube move lemma to remove intersections of any arcs with the arc $a_i\cup\beta\cup a_j$. To obtain the third diagram we use the tube swap lemma twice. We then remove intersections with $\alpha$ using the tube move lemma. Finally, we remove the points removed the Whitney move and join the tubes; the corresponding destabilisation is pictured in Figure~\ref{fig:tubejoiningiso}. Note that the final diagram uses the new immersion data.}
\end{figure}

To overcome this, we first we pick an arc $a_i$ from $y_i^+$ to $y_i^-$ which is disjoint from $\alpha\cup\mathring{\beta}$ and all other marked points on the surface; clearly, we may do so since the complement of $\alpha\cup\mathring{\beta}$ and all points is connected. We then similarly pick an arc $a_j$ from $y_j^+$ to $y_j^-$ which is disjoint from~$\alpha\cup\mathring{\beta}\cup a_i$ and all marked points other than $y_j^+$ and $y_j^-$; again the complement of these arcs and points is connected since $a_i$ does not intersect $\alpha$ or $\beta$. We remove the intersections of all arcs with~$a_i \cup \beta\cup a_j$, which is one long embedded arc, one by one using the tube move lemma as in previous cases; see Figure~\ref{fig:tubejoiningdiag}. Note that this may move $\gamma_i$ and $\gamma_j$.

We now use the tube swap lemma to swap $\gamma_i$ to $a_i$ and $\gamma_j$ to $a_j$. We then move any arcs off $\alpha$ using the tube move lemma.

Finally we remove the intersection points $y_i^+$, $x_i^+$, $y_j^-$, $x_j^-$, join the arcs $\gamma_i$ and $\gamma_j$ using~$\beta$, and change the index of the points labelled $j$ to $i$; see Figure~\ref{fig:tubejoiningdiag}. The corresponding isotopy and destabilisation of the associated surface is given by Figure~\ref{fig:tubejoiningiso}.
\begin{figure}[!ht]
   \centering \includegraphics[width=\textwidth]{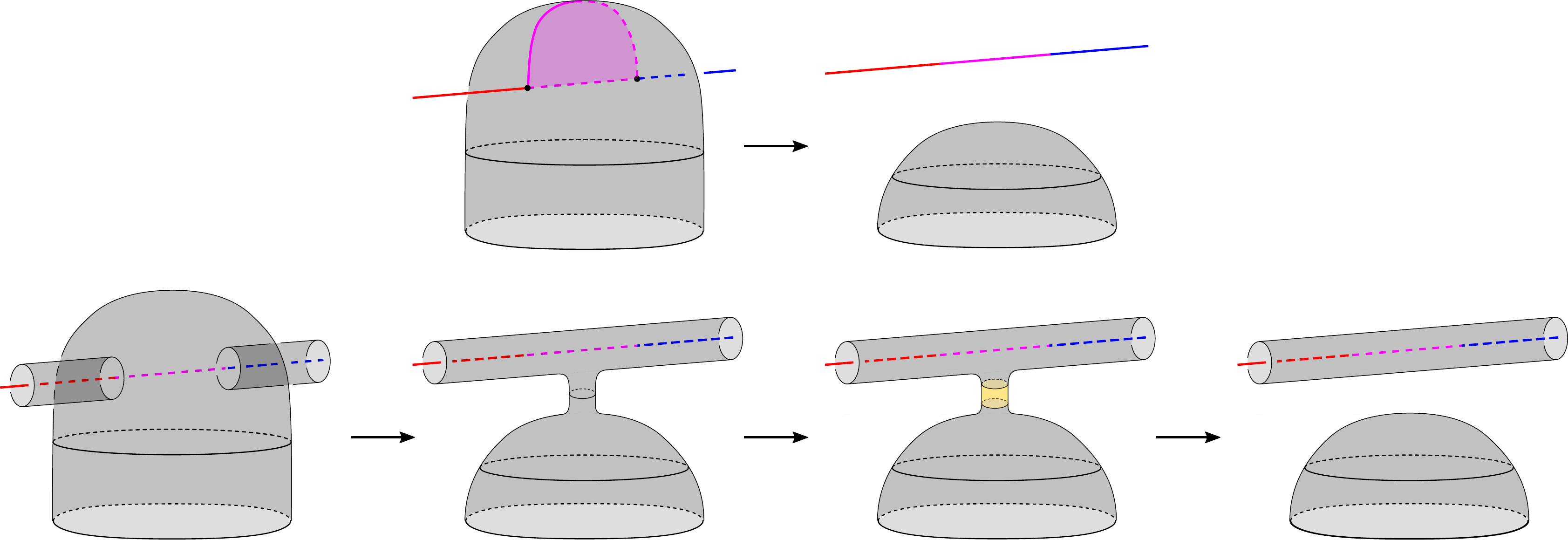}
      \caption{\label{fig:tubejoiningiso}Above we see the Whitney move. Below we see the corresponding isotopy and destabilisation of the associated surfaces. This operation is the operation described by Gabai in \cite[Figure 5.8]{Gabai2017} with an additional destabilisation to remove the `single tube' pictured in \cite[Figure 5.8]{Gabai2017}.}
\end{figure}

\textbf{Case 4:} \textit{The Whitney arcs are an arc $\alpha$ between $x_i^+$ and $y_j^-$, and an arc $\beta$ between~$y_i^+$ and $x_j^-$ for $i\neq j$.} In this case, we use the tube swap lemma to swap $\gamma_i$ to any arc between~$y_i^+$ and $y_i^-$ disjoint from other tube arcs, but which may intersect $\alpha$ and $\beta$; some such arc exists since the complement of $\cup_r\gamma_r\cup_k y_k^\pm$ is a punctured surface, so is connected. We are then in Case 3 and proceed as before.

At stage $P_k$, we have a surface diagram $\mathcal{S}_k$ with immersion data $f_k\rightarrow X$. Since $f_k$ is an embedding it has no intersection points so must be the empty diagram, hence the associated tubed surface $\widetilde{S}_k$ is $f_k(S)=P_k$ as required. This completes the proof of Theorem \ref{ineq} modulo Lemma~\ref{killthe2knot}.
\end{proof}

Examining the proof, we in fact prove a stronger, if more technical, fact.
\begin{proposition}
Let $\Sigma$ and $\Sigma'$ be immersed surfaces with $|\sing(\Sigma)|=|\sing(\Sigma')|$, both with the same number of positive and negative double points. Suppose they are regularly homotopic through surfaces with at most $2n$ double points. Then given a surface tubing diagram $\mathcal{S}$ for $\Sigma$, there exists a surface tubing diagram $\mathcal{S}'$ for $\Sigma'$ such that
$$d_{\st} (\widetilde{S},\widetilde{S}')\leq n-\tfrac{1}{2}|\sing(\Sigma)|+1.$$
\end{proposition}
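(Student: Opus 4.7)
The plan is to observe that the proof of Theorem~\ref{ineq} already establishes this stronger statement once one replaces the initial empty tubing diagram on an embedded $\Sigma$ by the given diagram $\mathcal{S}$, and keeps careful track of the running genus throughout the shadowing construction.

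More precisely, first I would fix a generic regular homotopy $P_1=\Sigma,\ldots,P_k=\Sigma'$ realising the hypothesis that at most $2n$ double points are present at every time, and set $\mathcal{S}_1:=\mathcal{S}$ with associated tubed surface $\widetilde{S}_1=\widetilde{S}$. For each $i$, I would construct $\mathcal{S}_{i+1}$ from $\mathcal{S}_i$ by the verbatim procedure used for Theorem~\ref{ineq}: ambient isotopies are handled via Remark~\ref{isoremark}, finger moves by adding a new arc to the diagram after clearing the Whitney arcs with Lemma~\ref{tubemove}, and Whitney moves by the Case~1--4 analysis using Lemmas~\ref{tubeswap}, \ref{tubemove}, and~\ref{killthe2knot} as appropriate. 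Setting $\mathcal{S}':=\mathcal{S}_k$ (which has immersion data $f_k$ with image $\Sigma'$) produces the desired surface tubing diagram for $\Sigma'$, and the resulting sequence of associated tubed surfaces $\widetilde{S}=\widetilde{S}_1,\ldots,\widetilde{S}_k=\widetilde{S}'$ is related by stabilisations, destabilisations, and ambient isotopies.

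The key bookkeeping is the following. If $P_i$ has $2m_i$ double points then $\mathcal{S}_i$ has $m_i$ tube arcs, so $\widetilde{S}_i$ has genus $g+m_i$. By hypothesis $m_i\leq n$, so $g(\widetilde{S}_i)\leq g+n$. The same analysis as in Theorem~\ref{ineq} shows that each intermediate stabilisation coming from Lemmas~\ref{tubeswap} and~\ref{tubemove}, and from the finger and Whitney move transitions, raises the genus by at most $1$ above this running bound; in particular the stabilise-then-two-destabilise sequence used in Case~2 for a crossed Whitney disc never exceeds $g+n+1$. Therefore every embedded surface appearing in the constructed sequence has genus at most $g+n+1$.

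Since $|\sing(\Sigma)|=|\sing(\Sigma')|$, both $\widetilde{S}$ and $\widetilde{S}'$ have genus $g+\tfrac{1}{2}|\sing(\Sigma)|$, and hence
$$d_{\st}(\widetilde{S},\widetilde{S}')\leq (g+n+1)-\bigl(g+\tfrac{1}{2}|\sing(\Sigma)|\bigr)=n-\tfrac{1}{2}|\sing(\Sigma)|+1,$$
as required. The main point of care — and the only real bookkeeping obstacle — is verifying that the stabilisation slack does not compound across successive transitions: but since each individual transition temporarily raises the genus by at most $1$ and then restores it to $g+m_{i+1}\leq g+n$ before the next transition begins, the uniform bound $g+n+1$ is maintained throughout.
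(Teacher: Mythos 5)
Your proposal is correct and matches the paper's intended argument: the paper states this proposition as what one gets by ``examining the proof'' of Theorem~\ref{ineq}, i.e.\ running the identical shadowing construction starting from the given diagram $\mathcal{S}$ rather than the empty diagram, exactly as you do. Your genus bookkeeping (each $\widetilde{S}_i$ has genus $g+m_i\leq g+n$, each transition overshoots by at most $1$, and both endpoints have genus $g+\tfrac12|\sing(\Sigma)|$) is the right way to extract the stated bound.
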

\noindent Note that taking both $\Sigma$ and $\Sigma'$ to be embeddings yields Theorem~\ref{ineq}, since the only surface tubing diagrams for embeddings are the empty diagram.
\begin{remark}\leavevmode
\begin{enumerate}[(1)]
\item Schwartz \cite{Schwartz2018} constructs pairs of embedded spheres in a 4-manifold $X$ with 2-torsion in $\pi_1(X)$, which are regularly homotopic, and are such that any regular homotopy between them must contain a crossed Whitney move. 
\item For each finger move we made a choice of Whitney arc to tube along; we tubed along $\beta$, but we could equally have tubed along $\alpha$. In the absence of a crossed disc we may make this choice so that only Case 1 and Case 3 Whitney moves occur. Indeed considering a homotopy as a map $H\colon S\times [0,1]\to X\times [0,1]$, the set of double points is a union of circles. Each double point circle $C$ has two disjoint circles as its preimage, $H^{-1}(C)=C_x\cup C_y$. Labelling the double point preimages so that $x_i^\pm\in C_x$ and $y_i^\pm\in C_y$ gives such a choice of tubing.
\label{improvremark}
\end{enumerate}
\end{remark}
\section{Destabilising the Slice Surfaces $K$ and $K'$}\label{sec:destabilising}
We complete the proof of Theorem~\ref{ineq} by showing that both $K$ and $K'$ become the standard disc bounded by an unknot in $S^3$ after a single destabilisation.

\subsection{Banded link presentations of knotted surfaces}

First, we review the calculus of banded link presentations for slice discs and 2-knots set out by Jablonowski \cite{Jablonowski2015}.

\begin{figure}[h!]
\centering
 
\begin{tikzpicture}
\node[anchor=south west,inner sep=0] (image) at (0,0) {\includegraphics[width=0.8\textwidth]{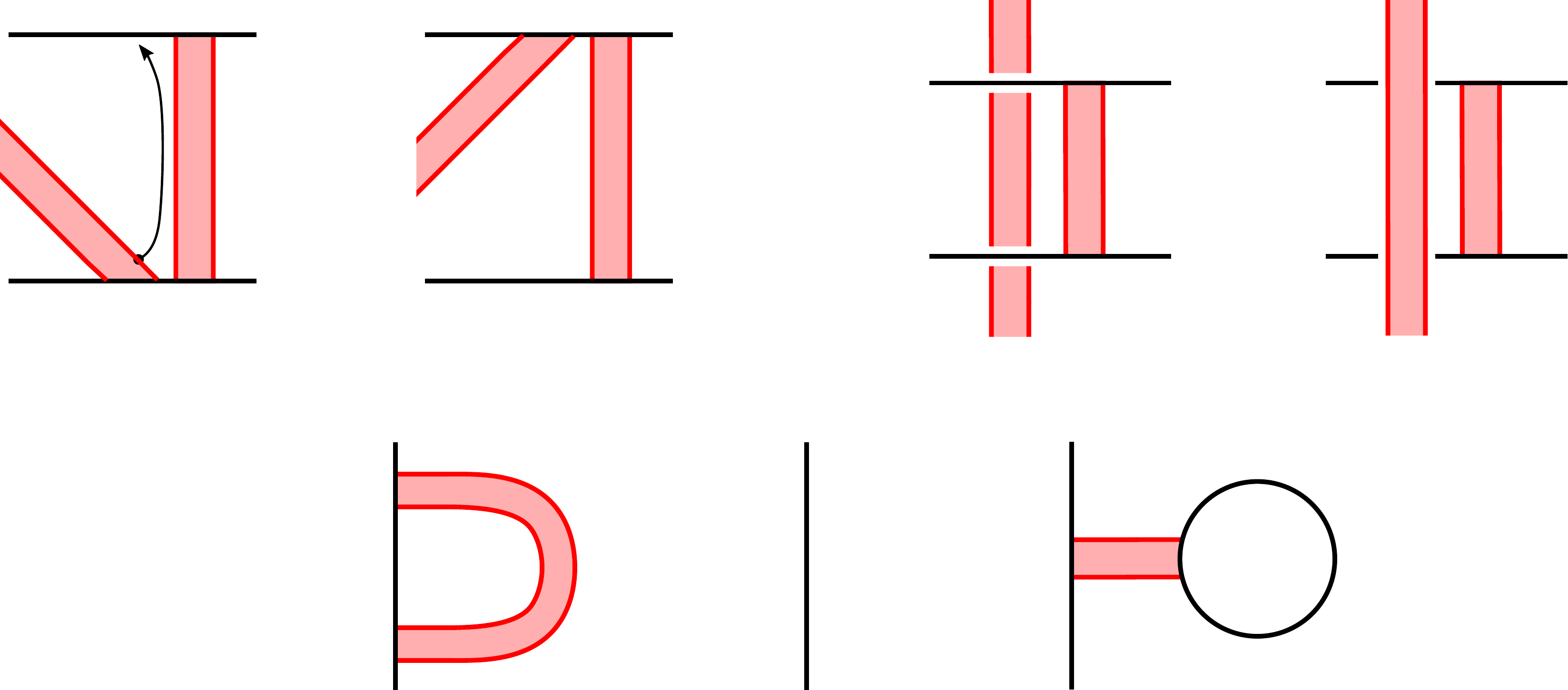}};
  \begin{scope}[x={(image.south east)},y={(image.north west)}]
    \node at (0.203,0.76) {\Large $\sim$};
    \node at (0.795,0.76) {\Large $\sim$};
    \node at (0.43,0.18) {\Large $\sim$};
    \node at (0.604,0.18) {\Large $\sim$};
  \end{scope}
\end{tikzpicture}
      \caption{\label{fig:bandmoves}Moves on banded link diagrams giving isotopic surfaces. The top left equivalence is a band slide, the top right equivalence a band swim. The bottom left equivalence is the cancellation of a maximum and a saddle, the bottom right equivalence is the cancellation of a minimum with a saddle.}
\end{figure}

\begin{definition}
A \emph{banded link presentation} of a smooth embedded surface $\Sigma\subset B^4$ bounded by a link $L$ in $S^3$, consists of the link $L'=L\cup U_n$, where $U_n$ denotes the unlink with $n$ components, along with a number of bands, embedded copies $[0,1]\times[0,1]$ disjoint from each other and~$L'$, except at the ends of the bands $\{0,1\}\times[0,1]$ which lie in~$L'$. Performing a \emph{band move} is the operation of removing the ends of the bands $\{0,1\}\times[0,1]$ from~$L'$ and adding in the sides of the bands $[0,1]\times\{0,1\}$. We require that the resulting link after performing all the band moves to $L'$ is the unlink.
\end{definition}

A banded link presentation describes a slice surface for $L$ via a movie which can be seen by considering $B^4$ as $B^3\times [0,1]$. The first slide in this movie is $L$. In the next, we add the unknotted components of $U_n$, these correspond to minima of the surface with respect to the projection onto $[0,1]$. In the next slide we a perform a band move on each band, which corresponds to adding saddles to the surface. After performing these band moves we obtain the unlink $U_{n'}$ for some $n'$. Each component of this unlink is then capped off with a disc, corresponding to maxima of the surface.
\begin{figure}[ht!]
\centering
 \begin{minipage}[c]{0.34\textwidth}
   \includegraphics[width=\textwidth]{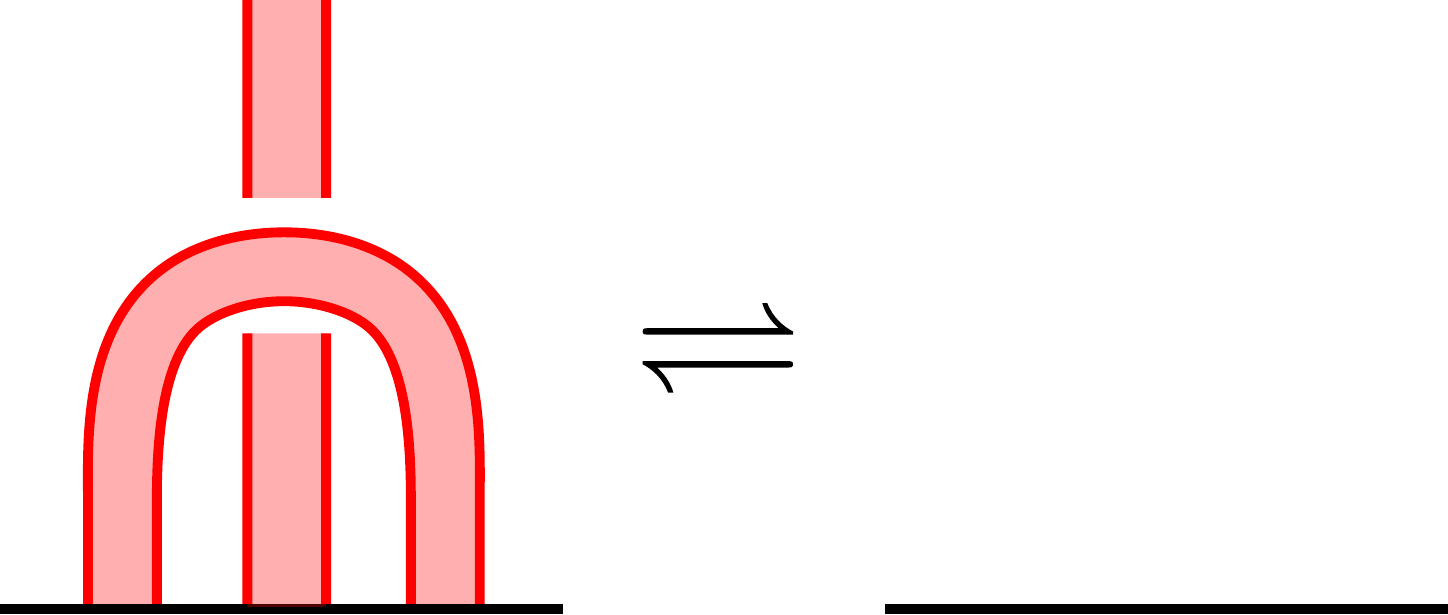}
\end{minipage}%
\begin{minipage}[c]{0.66\textwidth} \caption{Stabilisation and destabilisation in banded link presentations.\label{fig:bandstab}}
\end{minipage}
\end{figure}

There are several moves on banded link diagrams one may perform that give isotopic surfaces. These are isotopy of the diagram, band slides, band swims, cancelling a maximum with a saddle, and cancelling a minimum with a saddle; see Figure~\ref{fig:bandmoves}.

Stabilisation or destabilisation in banded link diagrams corresponds to adding or removing respectively two bands, as in Figure~\ref{fig:bandstab}. Note that it does not matter where the loose end of the band goes.

\subsection{Proof that $K$ and $K'$ destabilise to the standard disc}
\begin{lemma} The surfaces $K$ and $K'$, described in Case 2 in the proof of Theorem~\ref{ineq}, become the standard disc bounded by the unknot in $S^3$ after a single destabilisation.\label{killthe2knot}
\end{lemma}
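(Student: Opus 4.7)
The plan is to apply the banded link calculus reviewed just above. The first step is to draw an explicit banded link presentation of $K$ (and separately of $K'$) by tracking the construction of Case~2 through Figures~\ref{fig:creationofdoubletube}--\ref{fig:diagramfordisc}. Inside the 4-ball neighbourhood of $\Gamma_i$, the surface $K$ is a genus one slice surface for the unknot obtained by capping off one end of the double tube after the stabilisation/destabilisation of Figure~\ref{fig:cutthetube}. A natural banded link picture is therefore: the unknot boundary $U$, two additional small unknotted components forming a Hopf link (coming from the pushed-in annulus bounded by the Hopf link that caps the double tube), together with two bands — one joining the two Hopf components into an unknot, and one joining the result to $U$. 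After band moves this becomes an unlink capped off by discs, giving a genus one surface bounded by $U$, as required.

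Next I would realise the destabilisation. By Figure~\ref{fig:bandstab}, a stabilisation amounts to inserting two extra bands somewhere in the diagram; a destabilisation is the reverse. My plan is to insert a pair of bands positioned so that one of them cancels, via a band swim (top right of Figure~\ref{fig:bandmoves}), against the band that joins the Hopf pair to $U$. After the swim, a saddle--minimum cancellation (bottom right of Figure~\ref{fig:bandmoves}) eliminates one Hopf component, and a saddle--maximum cancellation (bottom left of Figure~\ref{fig:bandmoves}) eliminates the other together with the remaining added band. The resulting presentation is $U$ with no added components and no bands, which is precisely the banded link presentation of the standard slice disc.

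For $K'$, the only difference from $K$ is that the stabilisation cutting the double tube is the one of Figure~\ref{fig:unorientabletubecut}, which differs from that of Figure~\ref{fig:cutthetube} by a twist in the two bands. In the banded link presentation this manifests as a full twist placed on one of the two bands of $K$. The same sequence of moves goes through: the band swim absorbs a twist on a band that is subsequently cancelled against an extremum, so the twist does not obstruct the simplification. Hence a single destabilisation also takes $K'$ to the standard disc.

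The step I expect to be the main obstacle is the first one, namely reading off the correct banded link presentations of $K$ and $K'$ from the geometric construction — in particular, tracking the framings of the capping bands through the isotopy that sucks the capped ends of the double tube back into a neighbourhood of $\Gamma_i$. Once those presentations are on the page, the verification that one destabilisation suffices is a short calculation in Jablonowski's calculus using only the moves in Figures~\ref{fig:bandmoves} and \ref{fig:bandstab}.
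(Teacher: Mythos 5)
Your overall strategy --- read off a banded link presentation of $K$ and $K'$ and simplify it with Jablonowski's moves until a cancelling pair of bands (Figure~\ref{fig:bandstab}) can be removed --- is exactly the paper's, and you are right that the hard part is the first step. But the presentation you propose is not a presentation of $K$, and the error is not just in the framings. An Euler characteristic count shows this: your diagram has two births, two saddles, and (after the two band moves, which merge the three components of $L'$ into one unknot) one death, so it describes a surface with $\chi = 2-2+1 = 1$, i.e.\ a disc. But $K$ is a genus one surface with one boundary component, $\chi = -1$. The correct Morse structure (obtained from the $z$-direction in Figure~\ref{fig:morsedisc}) has one minimum, \emph{four} saddles and two maxima: the two tubes of the double tube running along $\Gamma_i$ contribute saddles of their own, in addition to the saddles coming from the bands that cap off the cut ends. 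Separately, your added components cannot ``form a Hopf link'': by definition the added components of a banded link presentation constitute an unlink (they are births). The Hopf linking of the double tube must instead be encoded in how the four bands clasp one another, and identifying that clasping pattern --- Figure~\ref{fig:simplifiyingtheslicedisc}, first frame --- is precisely the content of the computation you have skipped.

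The second half of the argument is also oriented the wrong way. To destabilise you must locate, after band slides and swims, two bands \emph{already present} in the diagram of $K$ sitting in the cancelling position of Figure~\ref{fig:bandstab}, and delete them; \emph{inserting} a pair of bands is a stabilisation and raises the genus, which the lemma cannot afford (the whole point is that one destabilisation, with no preliminary stabilisation, suffices, so that the genus bound $g+n+1$ in Theorem~\ref{ineq} holds). Moreover a band swim never cancels a band against another band --- it only passes one band through another --- so the proposed ``swim then cancel'' step is not a move in the calculus. The repair is to carry out the actual simplification: starting from the four-band presentation, use band slides, a swim, and a saddle--minimum cancellation to reduce to a three-band picture in which two of the bands form a visible cancelling pair; removing them is the destabilisation, and a final saddle--maximum cancellation leaves the unknot with no bands. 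Your treatment of $K'$ (a half twist on the bands, absorbed by a swim so that one obtains the mirror-image computation) is the right idea and matches the paper, but it only becomes meaningful once the presentation of $K$ is correct.
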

\begin{proof}
\begin{figure}[!h]
   \centering 
\begin{minipage}[c]{0.5\linewidth}
\begin{tikzpicture}
\node[anchor=south west,inner sep=0] (image) at (0,0) {\includegraphics[width=\textwidth]{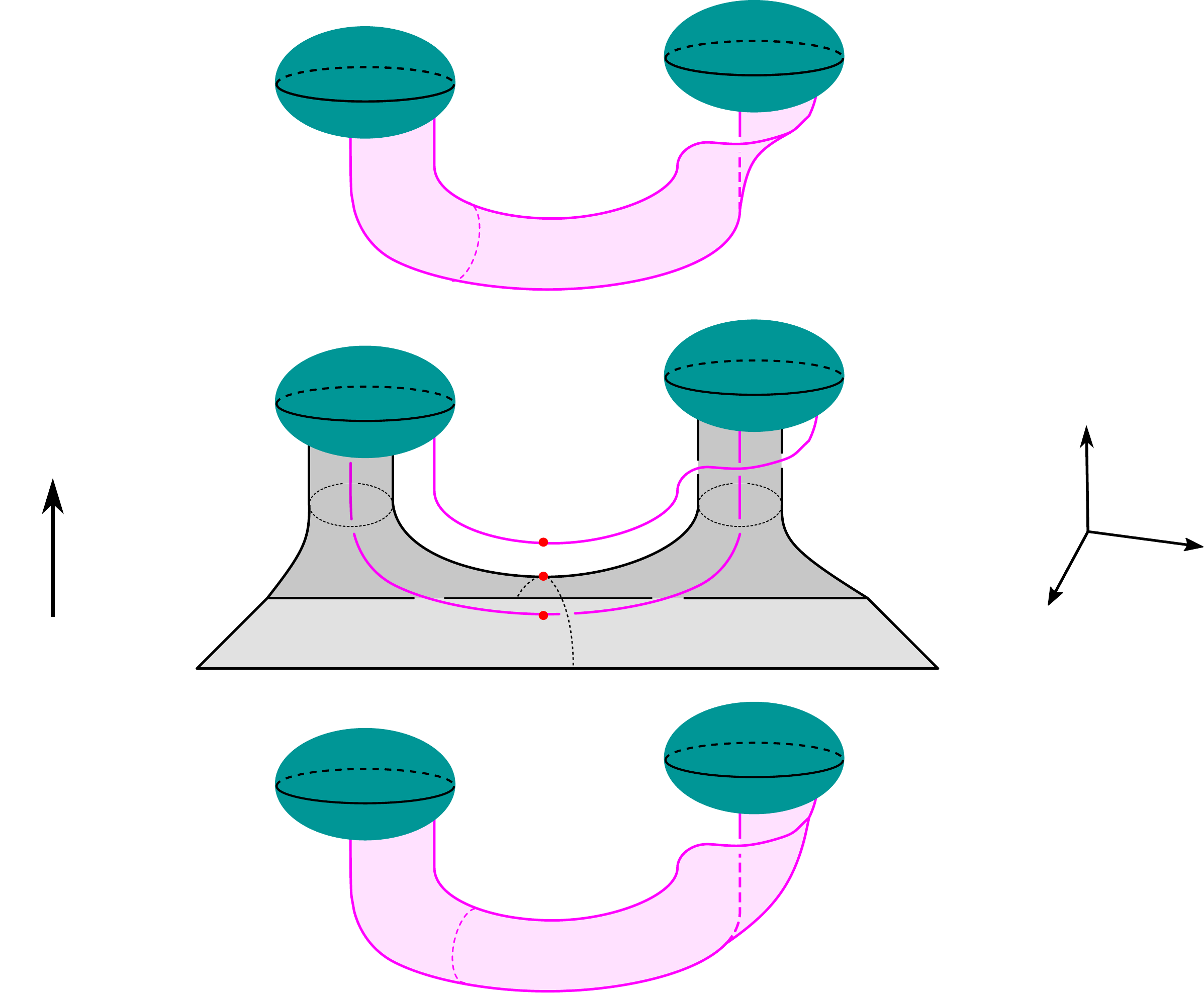}};
  \begin{scope}[x={(image.south east)},y={(image.north west)}]
    
    \node [anchor=west] at (0,0.44) {$t$};
    \node [anchor=west] at (0.875,0.39) {x};
    \node [anchor=west] at (0.955,0.41) {y};
    \node [anchor=west] at (0.9,0.53) {z};
    \end{scope}
\end{tikzpicture}\end{minipage}%
\begin{minipage}[c]{0.49\linewidth}
      \caption{\label{fig:morsedisc}Deformation of $K$, so that the $z$ direction, as depicted, restricts to a Morse function on the surface.}\end{minipage}%
\end{figure}

We first consider Figure~\ref{fig:diagramfordisc}. We recall that the `capped-off' ends are made by attaching a band between the tubes and capping off by a disc as in Figures~\ref{fig:cutthetube} and~\ref{fig:unorientabletubecut}. After an isotopy, the $z$-direction as depicted in Figure~\ref{fig:morsedisc} gives the standard Morse function for $B^4$, which restricts to a Morse function on the surface. With this Morse function the surface has one minimum and two saddles, then two further saddles and two maxima from the caps. 

\begin{figure}[p]
   \centering \includegraphics[width=0.94\textwidth]{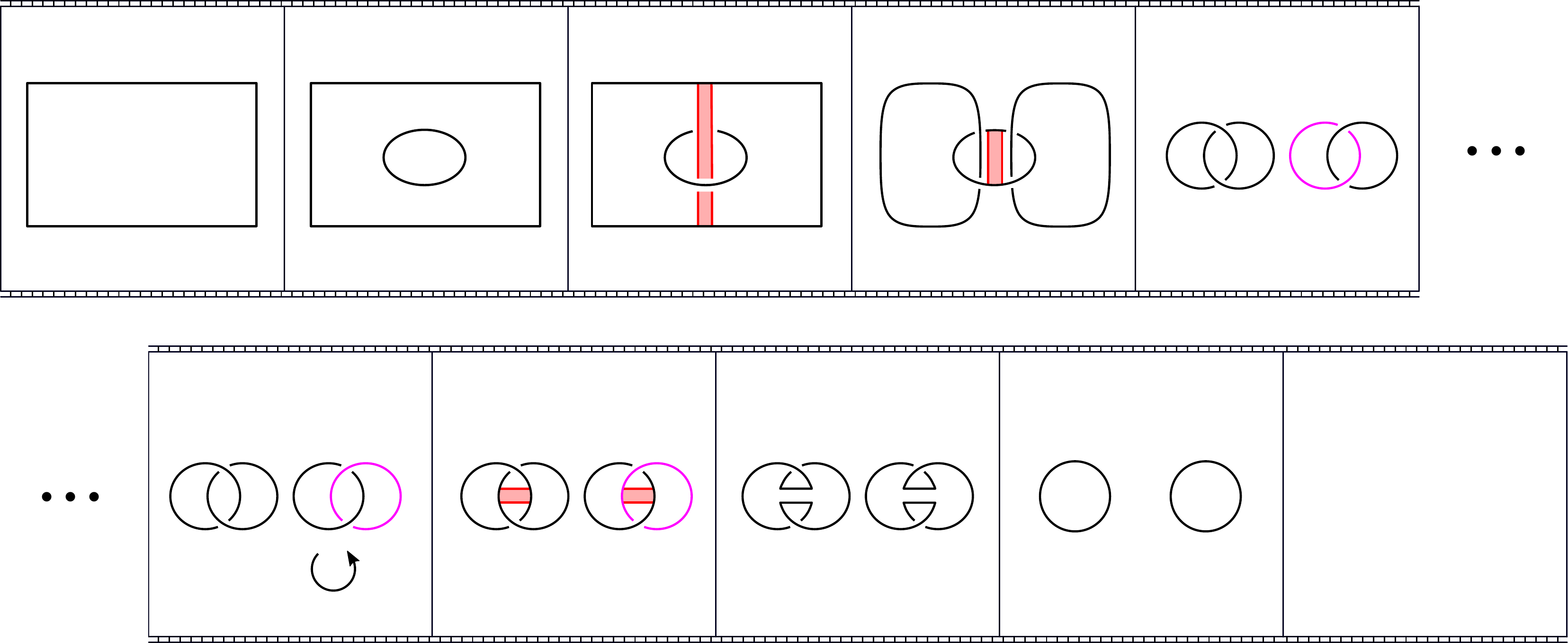}
      \caption{\label{fig:moviefordisc}A movie for the slice surface $K$. Note that the $z$ direction depicted in Figure~\ref{fig:morsedisc}, is now the time direction of our movie.\\}
\smallbreak
\begin{tikzpicture}
\node[anchor=south west,inner sep=0] (image) at (0,0) {\includegraphics[width=\textwidth]{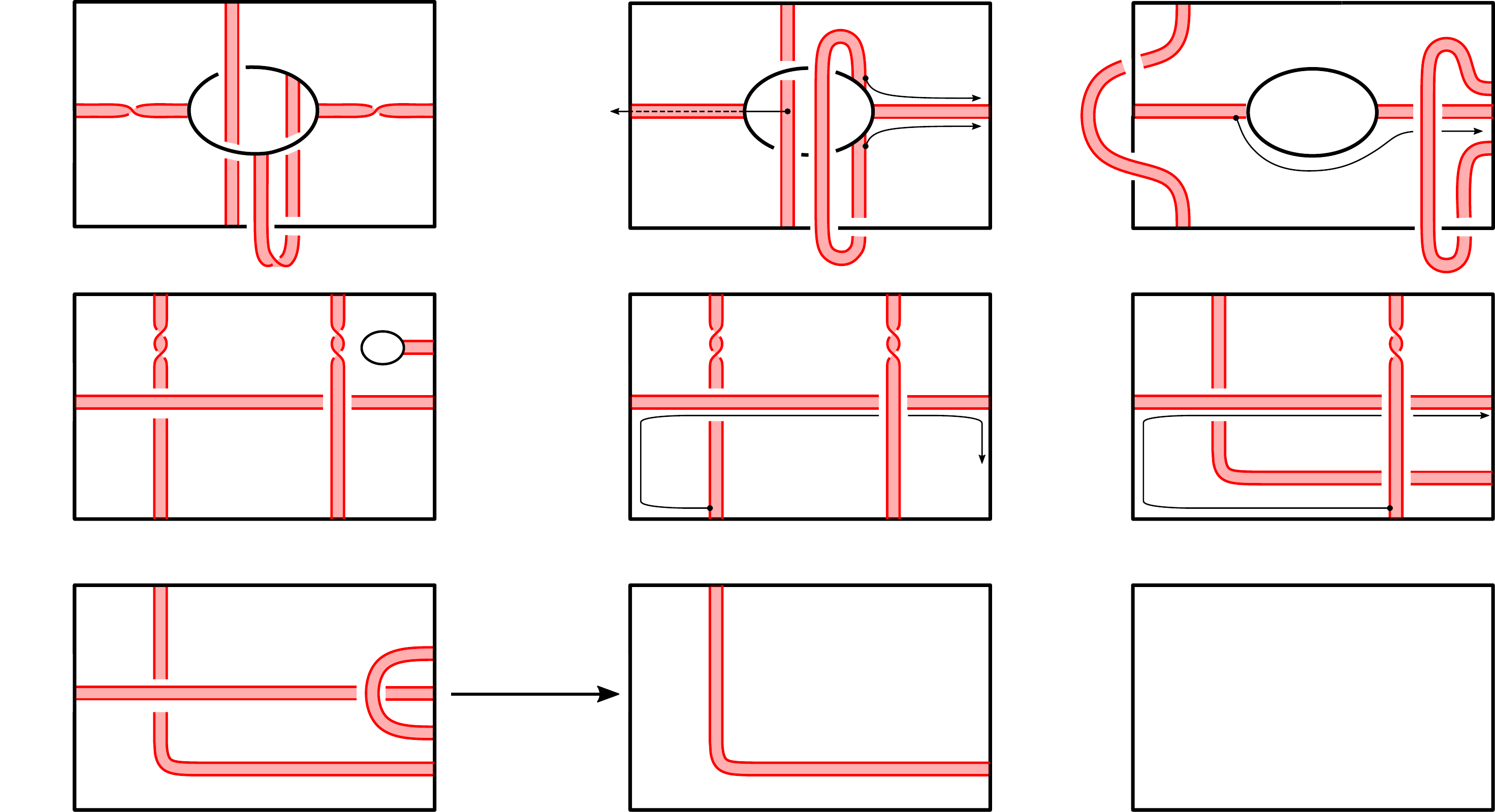}};
  \begin{scope}[x={(image.south east)},y={(image.north west)}]
    \node [anchor=west] at (0.246+0.04,0.186) {\normalfont Destabilise};
    \node [anchor=west] at (0.29+0.04,0.504) {\Large$\sim$};
    \node [anchor=west] at (0.29+0.04,0.84) {\Large$\sim$};
    \node [anchor=west] at (0.653+0.02,0.84) {\Large$\sim$};
    \node [anchor=west] at (0.653+0.02,0.504) {\Large$\sim$};
    \node [anchor=west] at (0.653+0.02,0.15) {\Large$\sim$};
    \node [anchor=west] at (0.0,0.15) {\Large$\sim$};
    \node [anchor=west] at (0.0,0.504) {\Large$\sim$};
    \end{scope}
\end{tikzpicture}
\caption{\label{fig:simplifiyingtheslicedisc}A band presentation for $K$, which we destabilise to obtain the standard disc. To obtain the second image from the first we perform an isotopy, untwisting the two bands at the sides. To obtain the third from the second we perform two band slides and one band swim, as indicated by the arrows. To obtain the fourth we perform a band slide as indicated, and an isotopy of bands. To obtain the fifth we cancel a minimum with a saddle. To obtain the sixth we perform the indicated band slide, we then perform another band slide to obtain the seventh. We then destabilise to obtain the eighth image. Finally we cancel a maximum and a saddle to obtain the banded link presentation which is just the unknot, which is a banded link presentation for the standard disc bounded by the unknot. }
\end{figure}

This Morse function gives a movie presentation for the surface in the 4-ball; see Figure~\ref{fig:moviefordisc}. We deform this into a band presentation, depicted in Figure~\ref{fig:simplifiyingtheslicedisc}, which we simplify using band swims, slides, and a destabilisation, to obtain the standard disc.

In the case of $K'$, recall that we stabilised the outside of one tube with the inside of the other, which has the effect of adding a half twist to the bands; see Figure~\ref{fig:unorientablebands}. After a band swim and isotopy, we obtain the mirror image of Figure~\ref{fig:simplifiyingtheslicedisc} and proceed as before (taking the mirror image of each picture), destabilising to obtain the standard disc.
\end{proof}

\begin{figure}[h]
   \centering 
\begin{minipage}[c]{0.6\linewidth}
\begin{tikzpicture}
\node[anchor=south west,inner sep=0] (image) at (0,0) {\includegraphics[width=\textwidth]{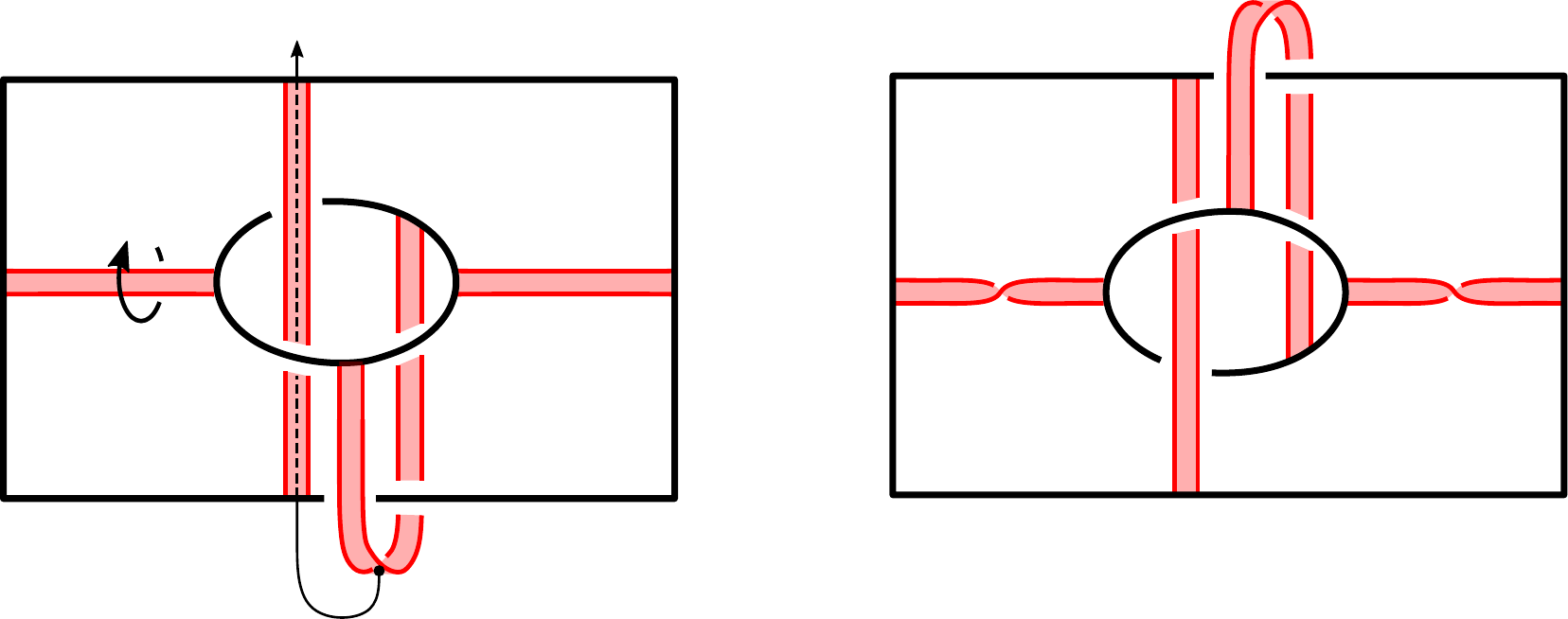}};
  \begin{scope}[x={(image.south east)},y={(image.north west)}]
  \node at (0.494,0.53) {\Large$\sim$};
    \end{scope}
\end{tikzpicture}\end{minipage}%
\begin{minipage}[c]{0.4\linewidth}
\caption{\label{fig:unorientablebands}A band presentation for $K'$, which we see is the mirror image of the band presentation for $K$.}\end{minipage}%
\end{figure}


\begin{thebibliography}{oo}
\bibitem[BS15]{InancBaykur2015} R. I. Baykur and N. Sunukjian, \textit{Knotted surfaces in 4-manifolds and stabilizations}, J. Topol. 9, no. 1 (2015), 215--231 
\bibitem[FQ90]{Freedman1990} M. H. Freedman and F. Quinn, \textit{Topology of 4-Manifolds (PMS-39)},    Princeton Univ. Press (1990)
\bibitem[Gab17]{Gabai2017} D. Gabai, \textit{The 4-Dimensional Light Bulb Theorem}, J. Amer. Math. Soc (To appear) (2017)
\bibitem[Hir59]{Hirsch1959} M. W. Hirsch, \textit{Immersions of manifolds}, Trans. Amer. Math. Soc. 93, No. 2 (1959), 242-276.
\bibitem[Jab16]{Jablonowski2015} M. Jablonowski, \textit{On a banded link presentation of knotted surfaces},     J. Knot Theory Ramifications 25, No. 3 (2016)
\bibitem[JZ18]{Juhasz2018} A. Juh{\'{a}}sz and I. Zemke, \textit{Stabilization distance bounds from link Floer homology}, Preprint (2018) arXiv:1810.09158v1
\bibitem[Mil19]{Miller2019} M. Miller, \textit{A concordance analogue of the 4-dimensional light bulb theorem}, Int. Math. Res. Not. IMRN, (to appear) (2019)
\bibitem[ST19]{Schneiderman2019} R. Schneiderman and P. Teichner, \textit{Homotopy versus isotopy: spheres with duals in 4-manifolds}, Preprint (2019) arXiv:1904.12350
\bibitem[Sch18]{Schwartz2018} H. R. Schwartz, \textit{Equivalent non-isotopic spheres in 4-manifolds}, J. Topol., 12: 1396-1412. (2019)
\bibitem[Sma57]{Smale1957} S. Smale, \textit{A classification of immersions of the two-sphere}, Trans. Amer. Math. Soc. 90, No. 2 (1957), 281–-290.
\end{thebibliography}
\end{document}